\renewcommand{\caption}[1]{\singlespacing\hangcaption{#1}\normalspacing}
\DeclareTextFontCommand{\emph}{\bfseries}
\def\thm@space@setup{%
  \thm@preskip=\topsep \thm@postskip=-0.5\thm@preskip
}
\renewenvironment{proof}[1][\proofname]{\par
  \vspace{-0.5\topsep}
  \pushQED{\qed}%
  \normalfont
  \topsep0pt \partopsep0pt 
  \trivlist
  \item[\hskip\labelsep
        \itshape
    #1\@addpunct{.}]\ignorespaces
}{%
  \popQED\endtrivlist\@endpefalse
}
\newcommand{\bR}{\mathbb{R}}
\newcommand{\bN}{\mathbb{N}}
\newcommand{\bZ}{\mathbb{Z}}
\newcommand{\bx}{\mathbf{x}}
\newcommand{\bb}{\mathbf{b}}
\newcommand{\ba}{\mathbf{a}}
\newcommand{\bO}{\mathbf{0}}
\newcommand{\inv}[1]{#1^{-1}}
\DeclareMathOperator{\rank}{rank}
\newcommand{\sC}{\mathcal{C}}
\newcommand{\abs}[1]{\lvert#1\rvert}
\newcommand{\covers}{\gtrdot}
\newcommand{\defeq}{\coloneqq}
\renewcommand{\subset}{\subseteq}
\newcommand{\vast}{\bBigg@{4}}
\newcommand{\Vast}{\bBigg@{5}}
\setlist{topsep=0pt}
\newcommand{\matrixspacingrule}{\rule[-35pt]{1pt}{76.5pt}}
\newcommand{\sxrightarrow}[2][]{%
  \mathrel{\text{$\xrightarrow[#1]{#2}$}}%
}
\newcommand{\toc}[2]{\sxrightarrow{c[#1, #2]}} 
\newcommand{\tor}[2]{\sxrightarrow{r[#1, #2]}} 
\theoremstyle{plain}
\newtheorem{lemma}{Lemma}[section]
\newtheorem{theorem}[lemma]{Theorem}
\newtheorem{proposition}[lemma]{Proposition}
\theoremstyle{definition}
\newtheorem{definition}[lemma]{Definition}
\newtheorem{example}[lemma]{Example}
\theoremstyle{remark}
\newtheorem{remark}[lemma]{Remark}
\newcommand{\autoref}[1]{\cref{#1}}
\patchcmd{\ALG@step}{\addtocounter{ALG@line}{1}}{\refstepcounter{ALG@line}}{}{}
\newcommand{\ALG@lineautorefname}{Line}
\newcounter{HALG@line}
\renewcommand{\theHALG@line}{\thealgorithm.\arabic{ALG@line}}
\algnewcommand\algorithmicinput{\textbf{Input:}}
\algnewcommand\Input{\item[\algorithmicinput]}
\algrenewcommand\Output{\item[\textbf{Output:}]}
\DeclareMathOperator{\SSYT}{SSYT}
\DeclareMathOperator{\SYT}{SYT}
\DeclareMathOperator{\WCT}{WCT}
\DeclareMathOperator{\CT}{CT}
\DeclareMathOperator{\Des}{Des}
\DeclareMathOperator{\flatt}{flat}
\DeclareMathOperator{\PD}{PD}
\DeclareMathOperator{\Dem}{Dem}
\DeclareMathOperator{\len}{len}
\DeclareMathOperator{\del}{del}
\DeclareMathOperator{\link}{link}
\DeclareMathOperator{\comp}{comp}
\DeclareMathOperator{\RW}{RW}
\DeclareMathOperator{\init}{in}
\DeclareMathOperator{\mult}{mult}
\DeclareMathOperator{\Ess}{Ess}
\DeclareMathOperator{\wt}{wt}
\DeclareMathOperator{\QPD}{QPD}
\DeclareMathOperator{\excess}{excess} 
\newcommand{\dotsw}{\dotsm} 
\newcommand{\bk}{\mathbb{k}}
\newcommand{\forest}{\mathfrak{P}}
\newcommand{\sG}{\mathcal{G}}
\newcommand{\fp}{\mathfrak{p}}
\newcommand{\sS}{\mathcal{S}}
\newcommand{\wl}[3]{L_{#1}^{#2, #3}}
\newcommand{\markdown}[1]{\overset{AAAAA\downarrow}{#1}}
\DeclareRobustCommand{\markdown}[1]{\overset{\scriptscriptstyle\downarrow}{#1}} \newcommand{\markup}[1]{NOT USED\hat{#1}}
\DeclareRobustCommand{\markup}[1]{\overset{\scriptscriptstyle\uparrow}{#1}}
\newcommand{\downmark}{``$\downarrow$''}
\newcommand{\upmark}{``$\uparrow$''} 
\DeclareRobustCommand{\downmark}{``$\downarrow$''}
\DeclareRobustCommand{\upmark}{``$\uparrow$''}
\newcommand{\monkdown}[1]{\markdown{#1}}
\newcommand{\monkup}[1]{\markup{#1}}
\newcommand{\bone}{\mathbf{1}}
\newcommand{\sH}{\mathcal{H}}
\newcommand{\sK}{\mathcal{K}}
\newcommand{\sT}{\mathcal{T}}
\newcommand{\slide}{\mathcal{F}}
\newcommand{\glide}{\mathcal{G}}
\newcommand{\overbar}[1]{\mkern 1.5mu\overline{\mkern-1.5mu#1\mkern-1.5mu}\mkern 1.5mu} 
\newcommand{\st}{:}
\newcommand{\fc}{\colon} 
\newcommand{\dash}{{-}}
\newcommand{\dd}{\dash}
\renewcommand{\subset}{\subseteq}
\renewcommand{\supset}{\supseteq} 
\newcommand{\Schub}{\mathfrak{S}}
\newcommand{\schur}{s}
\newcommand{\bSchub}{\overleftarrow{\Schub}}
\newcommand{\Groth}{\mathfrak{G}}
\newcommand{\Leh}{L}
\newcommand{\ssc}[3]{c_{#1,#2}^{#3}}
\newcommand{\bsc}[3]{\overleftarrow{c}_{#1,#2}^{#3}}
\newcommand{\size}{\abs}
\newcommand{\setsize}[1]{\##1}
\newcommand{\leftword}[1]{\dot{#1}}
\newcommand{\compred}[1]{\mathbf{#1}}
\newcommand{\backR}{\overleftarrow{R}}
\newcommand{\mSchub}[1]{\overbar{X}_{#1}}
\newcommand{\comptoset}[1]{S_{#1}}
\newcommand{\quasi}{F}
\newcommand{\SRring}[1]{\mathit{SR}_{#1}}
\DeclareMathOperator{\Spec}{Spec}
\newcommand{\interval}[1]{[#1]}
\newcommand{\setbuilder}[2]{\{\,#1\st#2\,\}}
\newcommand{\groupbuilder}[2]{\langle\,#1 \mid #2\,\rangle}
\newcommand{\idealbuilder}[2]{\langle\,#1 \st #2 \,\rangle}
\newcommand{\tabcomplex}[1]{\Delta(#1)}
\newcommand{\tabcomplexE}[2]{\Delta_{#2}(#1)}
\newcommand{\RPI}{R_1}
\newcommand{\RPII}{R_2}
\newcommand{\RPIII}{R_3}
\newcommand{\forestequiv}{\equiv}
\newcommand{\forestsim}{\sim}
\newcommand{\bs}{backwards saturated}
\newcommand{\sqbl}{[}
\newcommand{\sqbr}{]}
\def\qY/{quasi-Yamanouchi}
\def\QY/{Quasi-Yamanouchi}
\renewcommand{\caption}[1]{\singlespacing\hangcaption{#1}\normalspacing}
\title{Some results related to the slide decomposition of Schubert polynomials}
\author{Karl Thomas Baath Sjoblom}
\begin{document}

\maketitle
\makecopyright

\begin{abstract}
The expansion of a Schubert polynomial into slide polynomials corresponds to a sum over sub-balls in the subword complex.
There has been recent interest in other, coarser, expansions of Schubert polynomials. We extend the methods used in \cite{Subword-complexes-paper} to prove that the subword complex is a ball or a sphere to a more general method, and use it to prove that the expansion of a Schubert polynomial into forest polynomials also corresponds to a sum over sub-balls in the subword complex.

When expanding the product $\Schub_\pi\Schub_\rho$ of two Schubert polynomials into Schubert polynomials $\Schub_\sigma$, there is a bijection between shuffles of reduced words for $\pi$ and $\rho$ and reduced words for $\sigma$ (counted with multiplicity). We give such a bijection for Monk's rule and Sottile's Pieri rule.

We give tableau-based definitions of slide polynomials, glide polynomials, and fundamental quasisymmetric polynomials and show that the expansion of a Schur polynomial into fundamental quasisymmetric polynomials corresponds to a sum over sub-balls in the tableau complex.

The Schubert polynomials are the cohomology classes of matrix Schubert varieties, but there is no geometric explanation of the slide polynomials. We show that the slide polynomials are not (antidiagonal) Gr\"obner degenerations of matrix Schubert varieties, answering in the negative a question of \cite[Section 1.4]{Smirnov-Tutubalina}. 
\end{abstract}

\begin{biosketch}
Thomas B\aa{}\aa{}th Sj\"oblom 
completed  a Bachelor's degree in applied mathematics and a Master's degree in computer science at Chalmers University of Technology in 2013. He also took a number of classes in pure mathematics at the University of Gothenburg. After that, he started the Ph.D. program in mathematics at Cornell University. 
He was writing his dissertation when the COVID-19 pandemic broke out, which slowed things down.
In 2021 he moved to Germany, which further slowed the progress down. While in Germany he worked part time as a data scientist while finishing up his dissertation, and in 2024, he finally finished it.
Outside of mathematics, he enjoys napping and going to the gym.
\end{biosketch}

\begin{dedication}
Till farmor och morfar. Saknar er.
\end{dedication}

\begin{acknowledgements}
First, I would like to thank my advisor, Allen Knutson, 
for being approachable and helpful, for teaching me a lot of fun math 
and  
for being very good at figuring out when (not) to tell me about geometry.

Second, I would like to thank my committee, Mike Stillmann and Ed Swartz (and Lou Billera, who retired before I managed to finish),
for their helpful suggestions and their patience.
 I \textit{acknowledge} that I should 
have interacted with them more.

Third, I would like to thank the math department. 
In particular, I would like to thank all my academic siblings for making my time at Cornell way more fun than it otherwise would have been. 
I would also like to thank Melissa for putting up with years of silly ideas, and Elly for helping me with all the administrative stuff needed to come back and graduate.

Fourth, I would like to thank Teagle Down.
It was the first place started going to regularly outside the math department and it turned out to be a very friendly and welcoming place and the locker rooms were a great place to sit and think about math, or about your life, while waiting for the sweat to dry. Thank you, Ben, Tom, and everyone else for making it so.

Fifth, I would like to thank Ithaca and the people there.
Making friends with people outside the math department and the university was really helpful, especially in the later years, when people were starting to graduate and move away.
Among others, thank you, Meghan, Augusto, and Brian.

Sixth, I would like to thank the Thanks to Scandinavia Foundation for supporting me during my first year at Cornell.

Finally, I would like to thank Joy for all her love and support, and Ser for being a cat!
\end{acknowledgements}

\contentspage
\tablelistpage
\figurelistpage

\normalspacing \setcounter{page}{1} \pagenumbering{arabic}
\pagestyle{cornell} \addtolength{\parskip}{0.5\baselineskip}

\chapter{Background}
In this Chapter, we give a short background to the problems we solve, then 
we recall the necessary background and set out our notation for things. Most of the content is fairly standard and we cite the less standard content. In \autoref{sec:tableaux}, we provide three definitions of our own, which we use in Sections \ref{sec:fundamental-quasisymmetric} and \ref{sec:slide}.

\textit{Schubert polynomials} were introduced by Lascoux and Sch\"utzenberger in order to compute in the cohomology ring of flag varieties \cite{Lascoux-Schutzenberger}.
Fulton proved that they represent the cohomology classes of \textit{matrix Schubert varieties} \cite{Fulton-92}. They form a basis for the polynomial ring and for geometric reasons, it is known that the product of two Schubert polynomials expands positively in this basis. Finding a combinatorial rule for computing this product is a major open problem in Schubert calculus.
 
\textit{Subword complexes} were introduced by Knutson and Miller \cite{Grobner-geometry-paper} and they proved that they are balls or spheres \cite{Subword-complexes-paper}. They include \textit{Gr\"obner degenerations} of matrix Schubert varieties and therefore give interpretations of formulas for Schubert and \textit{Grothendieck polynomials} in terms of simplicial complexes.

Assaf and Searles introduced the \textit{slide polynomials} \cite{Slide-paper}. They indexed them by weak compositions, found a combinatorial multiplication rule, and showed how Schubert polynomials expand in terms of them. When one indexes them by reduced words, the multiplication rule they found no longer works and in this setting, a combinatorial multiplication rule would give a rule for multiplying Schubert polynomials. 
Pechenik and Searles introduced \textit{glide polynomials}, which extend the slide polynomials to $K$-theory \cite{Glide-paper}.

Smirnov and Tutubalina defined \textit{slide complexes}, corresponding to slide polynomials, and showed that they decompose the subword complex into balls \cite{Smirnov-Tutubalina}. Therefore, the expansion of a Schubert polynomial into slide polynomials corresponds to a sum over these balls. They asked whether there is a partial Gr\"obner degeneration of the corresponding matrix Schubert variety that gives this decomposition. In \autoref{sec:slides-not-degen-of-schuberts}, we show that that is not the case.

In \autoref{sec:backwards-saturated-balls}, we extend the proof by Knutson and Miller, which showed that the subword complex is a ball, to show that a large class of subcomplexes of the subword complex are balls. This class inclues the sets of reduced words corresponding to the \textit{forest polynomials} of Nadeau and Tewari \cite{Forest-polynomial-paper}.

\textit{Tableau complexes} were introduced by Knutson, Miller, and Yong who also showed that they are balls or spheres \cite{Tableau-complexes-paper}. They give an interpretation of a tableau-based formula for \textit{vexillary double Grothendieck polynomials} (which specialize to \textit{Schur polynomials}) as the $K$-polynomials of the \textit{Stanley--Reisner rings} of simplicial complexes. 
In \autoref{sec:decomposing-tableau-complexes}, we show that, in analogy with the slide expansion of a Schubert polynomial, the expansion of a Schur polynomial into \textit{fundamental quasisymmetric polynomials} corresponds to a decomposition into balls of the tableau complex of \textit{semistandard Young tableaux}.

As mentioned above, the multiplication rule of Assaf and Searles only works when the slide polynomials are indexed by weak compositions. 
Nenashev showed that there should exist a ``shuffle rectification'' rule for reduced words \cite{Nenashev}, and we found such a rule for \textit{Monk's rule} (\autoref{sec:monk's-rule}) and for \textit{Sottile's Pieri rule} (\autoref{sec:pieri's-rule}), however, these rules do not agree with the multiplication rule of Assaf and Searles. Since our rules work on the level of words, they do give a positive rule for multiplying Schubert polynomials in these two cases. Other positive rules for computing the products of Schubert polynomials in these two cases as well as some others exist, for example in terms of \textit{pipe dreams} \cite{RC-graphs-and-schubert-polynomials}, \cite{Kogan-Kumar}.

\section{Permutations and related concepts}
Here we recall some basic facts about permutations and set up our notation. In particular, we specify which of the many different conventions we use for wiring diagrams.
\subsection{Basic definitions and conventions}
The symmetric group $S_n$ is the group of all permutations $\pi$ of the set $\interval{n} \defeq \{1, \dotsc, n\}$.
We will usually write permutations in \emph{one-line notation}: $\pi = [\pi(1)\dotsm\pi(n)]$, so $[312]$ is the permutation that takes $1$ to $3$, $2$ to $1$ and $3$ to $2$. The \emph{cycle} $(i_1,\dotsc,i_k)$ is the permutation that takes $i_j$ to $i_{j+1}$ for $1 \le j < k$ and $i_k$ to $i_1$ and leaves all other numbers fixed. Of special interest among the cycles are the \emph{transpositions} $t_{i, j} \defeq (i, j)$, and of special interest among the transpositions are the \emph{adjacent transpositions} $s_i \defeq t_{i, i+1}$. The adjacent transpositions for $i \in \interval{n-1}$ generate $S_n$, and they satisfy the following relations.
\begin{itemize}
\item $s_is_j = s_js_i$ whenever $\abs{i-j} \ge 2$,
\item $s_is_{i+1}s_i = s_{i+1}s_is_{i+1}$ (the braid relation),
\item $s_i^2 = 1$.
\end{itemize}
All other relations between the $s_i$ follow from these.

If $\pi = s_{i_1} \dotsm s_{i_k}$, then the sequence $i_1 \dotsw i_k$ is a \emph{word} of \emph{length} $k$ for $\pi$ and the $i_j$ are called \emph{letters}. Given a word $w = i_1 \dotsw i_k$, we let $\prod w \defeq s_{i_1} \dotsm s_{i_k}$.
If $w$ is a word of length $k$ for $\pi$ and there are no words for $\pi$ of length less than $k$, then $w$ is a \emph{reduced word} for $\pi$, and the \emph{length} $\len(\pi)$ of $\pi$ is $k$. We will denote the set of reduced words for $\pi$ by $\RW(\pi)$. 
We will abuse notation and say that $w$ is a word in $S_n$ when $w$ is the word for a permutation $\pi \in S_n$. When $n$ does not matter, we will simply say that $w$ is a word. 
If $i < j$ and $\pi(i) > \pi(j)$, then $(i, j)$ is an \emph{inversion} of $\pi$. The length $\len(\pi)$ is equal to the number of inversion of $\pi$. 
\begin{example} \label{ex:triangular-word}
  The permutation $\pi = [n\,(n-1)\,\dotsm\,1] \in S_n$ is the longest permutation in $S_n$. It has length $n(n-1)/2$ since every pair $i < j$ is an inversion. 
One word for $\pi$ of particular interest to us is the \emph{triangular word} on $n-1$ letters:
\[
(n-1)(n-2) \dotsm 21 \; (n-1)(n-2) \dotsm 2\; \dotsm\; (n-1)(n-2)\;(n-1).\qedhere
\]
\end{example}
The above discussion (except \autoref{ex:triangular-word}) also applies to the group $S_\infty$ consisting of permutations of the positive integers that fix all but finitely many integers, and to the group $S_\bZ$ consisting of permutations of the integers that fix all but finitely many integers. These groups are generated by the adjacent transpositions $s_i$ for $i \ge 1$ and $i \in \bZ$, respectively.

Given a word $w = i_1\dotsm i_m$ in $S_n$ or $S_\infty$, a \emph{compatible sequence} for $w$ is a sequence of positive integers $j_1 \dotsm j_m$ such that:
\begin{itemize}
\item $j_k \le i_k$ for $k \in \interval{m}$,
\item $j_k \le j_{k+1}$ for $k \in \interval{m-1}$,
\item if $i_k < i_{k+1}$, then $j_k < j_{k+1}$.
\end{itemize} 
For words in $S_\bZ$, we allow non-positive integers in the compatible sequences. 
\begin{example}\label{ex:compatible-sequences}
  The compatible sequences for the word $w = 21434$ in $S_n$ (or $S_\infty$) are $11223$, $11224$, $11234$ and $11334$.  
\end{example}

Given a (not necessarily reduced) word $Q$ of length $k$, an ordered subsequence of $P$ of $Q$ is called a \emph{subword} of $Q$. Note that each subword comes with an embedding into $Q$. That is, a single word can have multiple embeddings into $Q$ and these are \textit{different as subwords}.  
To denote a subword (with its embedding), we will write a dash for each letter in $Q$ that is not in the subword. We will sometimes abuse the notation and treat a subword $P$ as if it is a word (that is, by ignoring any dashes). 
A subword $P$ \emph{represents} a permutation $\pi$ if $P$ (seen as a word) is a reduced word for $\pi$, and $P$ \emph{contains} $\pi$ if some subword of $P$ (seen as a word) represents $\pi$.
\begin{example}
  The word $Q = 121$ has subwords $121$, $\dash21$, $1\dash1$, $12\dash$, $\dash\dash1$, $\dash2\dash$, $1\dash\dash$ and $\dash\dash\dash$.
\end{example}

A \emph{Coxeter group} on $n$ generators is a group 
\[
G = \groupbuilder{r_1, \dotsc, r_n}{\text{$(r_ir_j)^{m_{ij}} = 1$ if $m_{ij} \neq \infty$}},
\] 
where $m_{ii} = 1$ for all $i$ and for $i \neq j$, $m_{ij} = m_{ji}$ and $m_{ij} \in \{2, 3, \dotsc, \infty\}$.
That is, $r_i^2 = 1$ for all $i$. When $m_{ij} = 2$, $r_i$ and $r_j$ commute. When $m_{ij} = 3$, $r_i$ and $r_j$ satisfy the braid relation. 
Elements of the form $g r \inv g$, where $r$ is a generator and $g$ is any element of $G$, are called \emph{reflections}.
The definitions of \emph{word} and \emph{subword} extend to this setting.

The groups $S_n$, $S_\infty$ and $S_\bZ$ are Coxeter groups:
$r_i = s_i$, 
$m_{ij} = 3$ if $\abs{i-j} = 1$ and $m_{ij} = 2$ if $\abs{i-j} \ge 2$.  The reflections are the transpositions $t_{ab}$.
The universal Coxeter group $W_n$ on $n$ generators shows up briefly in \autoref{sec:subword-complexes}.
In this group, $m_{ij} = \infty$ for all $i \neq j$. That is, the only relations are $r_i^2 = 1$ for $i \in \interval{n}$.
A word in $W_n$ is reduced if and only if it has no repeated adjacent letters, and there is exactly one reduced word for each element, so we will identify the reduced words with the elements.

\begin{definition}
  Let $G$ be a Coxeter group. The \emph{(strong) Bruhat order} on $G$ is the partial order on $G$ generated by $\pi\cdot t > \pi$, for all reflections $t$ with $\len(\pi\cdot t) > \len(\pi)$.
If $\len(\pi \cdot t) = \len(\pi) + 1$, we say that $\pi \cdot t$ \emph{covers} $\pi$ and write $\pi\cdot t \covers \pi$. 
\end{definition}
The \emph{Demazure product} (cf. \cite[Definition 3.1]{Subword-complexes-paper}) of a word $w$, $\Dem(w)$,  is defined inductively by
\begin{itemize}
\item the Demazure product of the empty word is the identity permutation, and
\item $\Dem(wi) = 
  \begin{cases}
    \Dem(w) \cdot s_i & \text{if $\Dem(w) \cdot s_i > \Dem(w)$,}\\
    \Dem(w) & \text{otherwise.}
  \end{cases}
$
\end{itemize}
If $w$ is reduced, the Demazure product is equal to the product of the word.

\subsection{Wiring diagrams} 
Given a word $w = w_1 \dotsm w_n$ in $S_n$, a \emph{wiring diagram} for $w$ (see \autoref{fig:wiring-diagram-example} for an example) consists of $n+1$ columns (numbered from $0$ to $n$, starting on the left). Each column contains a permutation of the numbers $1$, \ldots, $k$ as follows. 
\begin{itemize}
\item In the rightmost column, the numbers are in order, with $1$ at the bottom.
\item The $(i-1)$st column is equal to the $i$th column, except the $w_i$th and the $(w_i+1)$st numbers (from the bottom) have been swapped.
\end{itemize}
Thus, the numbers in the $i$th column form the permutation $\prod w_n \dotsm w_{i+1}$, so in the $0$th column, they form the permutation $\inv \pi$.
For each $j$, there are straight lines connecting the $j$ in column $i$ to the $j$ in column $i+1$ and column $i-1$. 
The lines connecting all the $j$s are a \emph{wire}, the \emph{$j$-wire}, and the \emph{label} on it is $j$. 
In order to match the terminology for words, we call the space \textit{between} column $i - 1$ and column $i$ \emph{position $i$}. At position $i$, the $w_i$th and the $(w_i + 1)$st wires from the bottom cross, we will say that there is a \emph{cross} at \emph{height $w_i$} at position $i$. That is, the positions are to the left of the columns and the heights are above the numbers.
Given a word $w$, let $\wl{w}{i}{j}$ be the $j$th label from the bottom in the $i$th column in the wiring diagram for $w$. This notation will be useful in \autoref{sec:multiplying-slides}, when we want to analyse what the wiring diagram looks like when we change the word letters in $w$.
The cross in column $i$ is an \emph{$(a,b)$-cross} if $\wl{w}{i}{w_i} = a$ and $\wl{w}{i}{w_i+1} = b$. That is, if the cross moves the $a$-wire up and the $b$-wire down when following them from right to left.  
Reading the labels on the crosses from right to left give a factorization of the word. If the word is reduced, the factors in the product correspond to the inversions of the permutation, see \autoref{fig:wiring-diagram-example}. 
A word is reduced if and only if no pair of wires cross twice, or equivalently, if and only if all crosses are labeled $(a,b)$ with $a < b$. 

\begin{figure}
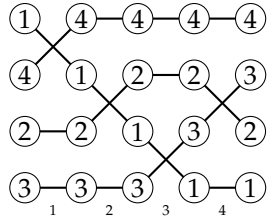
\centering
\includestandalone{wiring-diagram-ex}
\caption[The wiring diagram for the word $3212$ for $\sqbl4213\sqbr$.]{The wiring diagram for the word $3212$ for the permutation $[4213]$. Reading the labels on the crosses from right to left gives the factorization $[4213] = t_{23}t_{13}t_{12}t_{14}$.}
\label{fig:wiring-diagram-example}
\end{figure}

If we add an empty column and insert an $(a,b)$-cross in it, we obtain a wiring diagram word for $\pi t_{ab}$. This fact is the main reason we place the identity permutation on the right side of the wiring diagram instead of on the left.

\begin{lemma}[\protect{\cite[Lemma 2.3]{Billey-Holroyd-Young-Little-bump-paper}}]\label{lemma:defect-lemma}
  If $w = w_1\dotsb w_n$ is not reduced, but the word $w_1 \dotsb \widehat{w}_t \dotsb w_n$ is reduced, then there exists exactly one $t' \neq t$ such that $w_1 \dotsb \widehat{w}_{t'} \dotsb w_n$ is reduced. We say that $t$ and $t'$ are the \emph{defects} of $w$.
\end{lemma}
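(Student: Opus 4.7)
The plan is to analyze the wiring diagram of $w$ and identify a unique pair of wires that crosses more than once; this pair will be crossed at positions $t$ and $t'$, and these will be the only letters whose removal yields a reduced word.

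First, I set up notation by letting $w_t$ correspond to a cross between the wires labeled $a$ and $b$ at position $t$ in the wiring diagram of $w$ (that is, an $(a,b)$- or $(b,a)$-cross depending on their relative heights). By hypothesis, the word $w_1\dotsm\widehat{w}_t\dotsm w_n$ is reduced, so in its wiring diagram every pair of wires crosses at most once. Now observe that the wiring diagram of $w$ is obtained from this reduced diagram by inserting a single extra cross at position $t$ between the $a$-wire and the $b$-wire. Inserting this cross adds exactly one crossing to the pair $\{a,b\}$ and leaves the crossing counts of every other pair of wires unchanged, since the other wires pass through position $t$ without being swapped by the inserted cross.

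Next, since $w$ is not reduced, some pair of wires must cross more than once in its wiring diagram. By the previous paragraph the pair $\{a,b\}$ is the only pair whose crossing count differs between the two diagrams, so $\{a,b\}$ is forced to be this pair; moreover it crosses exactly twice (once in the reduced diagram, plus the inserted cross), and every other pair of wires crosses at most once in $w$. Let $t'$ denote the position of the second crossing between the $a$- and $b$-wires in the wiring diagram of $w$; by construction $t' \ne t$, and $t'$ is uniquely determined.

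Finally, I verify the defect property. Removing $w_{t'}$ eliminates the second crossing of $\{a,b\}$, leaving a wiring diagram in which $\{a,b\}$ crosses exactly once and every other pair still crosses at most once; hence $w_1\dotsm\widehat{w}_{t'}\dotsm w_n$ is reduced. Conversely, for any $s\notin\{t,t'\}$, removing $w_s$ preserves both crossings of the $\{a,b\}$-pair, so that pair still crosses twice in the resulting diagram and the word is not reduced. Thus $t'$ is the unique defect distinct from $t$. The main conceptual step is the observation that a single inserted cross only affects the crossing count of the two wires it swaps; everything else is bookkeeping on crossings.
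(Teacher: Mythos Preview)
The paper does not prove this lemma; it is quoted from \cite{Billey-Holroyd-Young-Little-bump-paper}, and the sentence following it is only an informal wiring-diagram gloss, not an argument. So there is no paper proof to compare against, but your proposal has a real gap.

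The error is in your second paragraph. The assertion that inserting the cross at position $t$ ``leaves the crossing counts of every other pair of wires unchanged'' is false. When you insert that cross, the $a$-wire and the $b$-wire exchange their trajectories to the left of position $t$; hence any cross at a position $s<t$ that, in the reduced diagram, lay between wire $a$ and some third wire $c$ becomes, in the diagram of $w$, a cross between wire $b$ and $c$ (and vice versa). So the counts for pairs $\{a,c\}$ and $\{b,c\}$ can shift, and pairs other than $\{a,b\}$ may end up crossing twice in $w$. Your third paragraph inherits the problem: ``every other pair still crosses at most once'' is not justified.

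A concrete failure: take $w = 12231$ in $S_4$ with $t=3$. Then $w_1\dotsb\widehat{w}_3\dotsb w_5 = 1231$ is reduced while $w$ is not, and the cross at position $3$ is between wires $1$ and $4$, so $\{a,b\}=\{1,4\}$. In the wiring diagram of $w$ the pair $\{1,4\}$ does cross twice (positions $2$ and $3$), but the pair $\{1,2\}$ \emph{also} crosses twice (positions $1$ and $5$). Thus $\{a,b\}$ is not the unique double-crossing pair, contrary to what you deduce. The lemma still holds here (the unique second defect is $t'=2$), but seeing why requires more: one has to show that $\{a,b\}$ crosses exactly twice even when other pairs do as well, and that deleting one of those two $\{a,b\}$-crosses undoes, via the $a\leftrightarrow b$ relabeling on the left, all the secondary double-crossings that the insertion created. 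That extra argument is precisely the content your bookkeeping skips.
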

In terms of wiring diagrams, this says that there are exactly two wires that cross twice.

\section{Combinatorial objects}
In this section, we introduce the combinatorial objects we are considering. That is, objects that have some kind of structure that make them interesting in their own right. Note in particular that we use $\lambda$ to denote partitions as well as compositions and weak compositions.
\subsection{Partitions, compositions and weak compositions}\label{sec:compositions}
We are interested in three kinds of sequences of integers that add up to $n$. In order of decreasing specificity:
\begin{itemize}
\item A \emph{partition} $\lambda = (\lambda_1, \dotsc, \lambda_k)$ of $n$ is a \textit{weakly decreasing} sequence of \textit{positive integers} that add up to $n$.
\item A \emph{composition} $\lambda = (\lambda_1, \dotsc, \lambda_k)$ of $n$ is a sequence of \textit{positive integers} that add up to $n$. 
\item A \emph{weak composition} $\lambda = (\lambda_1, \dotsc, \lambda_k)$ of $n$ is a sequence of \textit{non-negative integers} that add up to $n$. 
\end{itemize}
We write $\size{\lambda} \defeq n$ and $x^\lambda \defeq x_1^{\lambda_1}\dotsm x_k^{\lambda_k}$. 
We will generally display these using Young diagrams (using the English convention).
Given $\lambda = (\lambda_1, \dotsc, \lambda_k)$, the \emph{Young diagram} for $\lambda$ is a diagram with $\lambda_1$ left justified boxes in the top row, $\lambda_2$ in the next and so on. The boxes are indexed using matrix coordinates. We identify $\lambda$ and its Young diagram. A Young diagram for a partition, a composition and a weak composition are shown in \autoref{fig:young-diagram-partition-compositions}. 

\begin{figure}
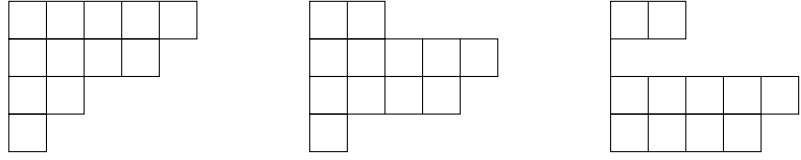

  \centering
  \includestandalone{young-diagrams-ex}
  \caption[Three Young diagrams.]{The Young diagrams for the partition $(5,4,2,1)$, the composition $(2,5,4,1)$, and the weak composition $(2,0,5,4)$.} 
  \label{fig:young-diagram-partition-compositions}
\end{figure}

The \emph{Lehmer code} of a permutation $\pi \in S_n$ is the weak composition $\Leh(\pi) = (\Leh(\pi)_1, \dotsc, \Leh(\pi)_n)$, where $\Leh(\pi)_i = \#\{\,j > i  \st \pi(j) < \pi(i)\,\}$. 
The map taking $\pi$ to $\Leh(\pi)$ is a bijection between permutations of length $k$ in $S_\infty$ and weak compositions of $k$.

There is a bijection between compositions $\lambda = (\lambda_1, \dotsc, \lambda_k)$ of $n$ and subsets $S = \{i_1 < \dotsb < i_{k-1}\}$ of $\interval{n-1}$ (see \cite[Section 7.19]{Stanley-2}) given by
\begin{align*}
  (\lambda_1, \dotsc, \lambda_k) &\mapsto \{\lambda_1, \lambda_1 + \lambda_2, \dotsc, \lambda_1 + \dotsb + \lambda_{k-1} \},
\\
\{i_1 < \dotsb < i_{k-1}\} &\mapsto (i_1, i_2 - i_1, i_3 - i_2, \dotsc, i_{k-1}-i_{k-2}, n - i_{k-1}).
\end{align*}
Let $\comptoset{\lambda}$ denote the set corresponding to the composition $\lambda$ and $\comp(S)$ the composition corresponding to the set $S$.

We end this section with some less standard definitions.
The following three are from \cite{Slide-paper}.
Given a weak composition $\lambda$, the \emph{flattening} $\flatt(\lambda)$ of $\lambda$ is the composition obtained by removing all $0$s from $\lambda$. 
For (weak) compositions $\lambda$ and $\mu$, $\lambda$ \emph{dominates} $\mu$, $\lambda \ge \mu$ if $\lambda_1 + \dotsb + \lambda_i \ge \mu_1 + \dotsb + \mu_i$ for all $i$. 
Let $\lambda$ and $\mu$ be compositions, $\lambda$ \emph{refines} $\mu$ if there are $0 = i_0 < i_1 < \dotsb < i_l$ such that $\lambda_{i_{j-1} + 1} + \dotsb + \lambda_{i_{j}} = \mu_{j}$ for $j = 1$, \ldots, $l$ (or equivalently, if $\comptoset{\lambda} \subseteq \comptoset{\mu} \subseteq \interval{n-1}$).

The following three are from \cite{Glide-paper}. 
A \emph{weak komposition} is a weak composition where some non-zero entries are bold. 
The \emph{excess} is the number of bold entries.
\begin{definition}[\protect{\cite[Definition 2.2]{Glide-paper}}]\label{def:glide-of-composition}
  Let $\lambda$ be a weak composition with $l$ non-zero entries at $n_1 < \dotsb < n_l$. The weak komposition $\kappa$ is a \emph{glide} of $\lambda$ if there are $0 = i_0 < i_1 < \dotsb < i_l$, such that for each $j \in \interval{l}$,
  \begin{itemize}
  \item $\kappa_{i_{j-1} + 1} + \dotsb + \kappa_{i_j} = \lambda_{n_j} + \excess(\kappa_{i_{j-1} + 1}, \dotsc, \kappa_{i_j})$,
  \item $i_j \le n_j$,
  \item the first non-zero number among $\kappa_{i_{j-1} + 1}$, \ldots, $\kappa_{i_j}$ is not bold.
  \end{itemize}
\end{definition}
\begin{example}\label{ex:glide-of-composition}
  Let $\lambda = (0, 1, 0, 0, 0, 3)$, so that $n_1 = 2$, $n_2 = 6$ and $\lambda_{n_1} = 1$, $\lambda_{n_2} = 3$. 
Taking $i_1 = 1$, $i_2 = 6$ shows that $(1, 0, 1, 0, \compred{1}, \compred{3})$ is a glide of $\lambda$, and $i_1 = 2$, $i_2 = 6$ shows that $(1, \compred{1}, 0, 2, 0, \compred{2})$ is another one. However, $(0, 1, \compred{1}, 1, 2, 0)$ is not one, since the third condition forces $i_1 \ge 3$, which contradicts the second condition.
See also \autoref{ex:tableau-kontent}.
\end{example}
\subsection{Tableaux}
\label{sec:tableaux}
We are following \cite{Tableau-complexes-paper} when defining tableaux, so that one of their main theorems, \autoref{thm:tableau-complex-ball-or-sphere}, applies.
First, we associate a poset with each Young diagram as follows. 
If $\lambda$ is a partition, we order the boxes by $(i_1, j_1) \le (i_2, j_2)$ if $i_1 \le i_2$ and $j_1 \le j_2$. If $\lambda$ is a (weak) composition, we order them lexicographically: $(i_1, j_1) \le (i_2, j_2)$ if $i_1 < i_2$ or if $i_1 = i_2$ and $j_1 < j_2$.
A \emph{tableau} $T$ is an order preserving function $T \fc \lambda \to \interval{n}$.
We draw tableaux by filling each box $b$ with $T(b)$.
\begin{figure}
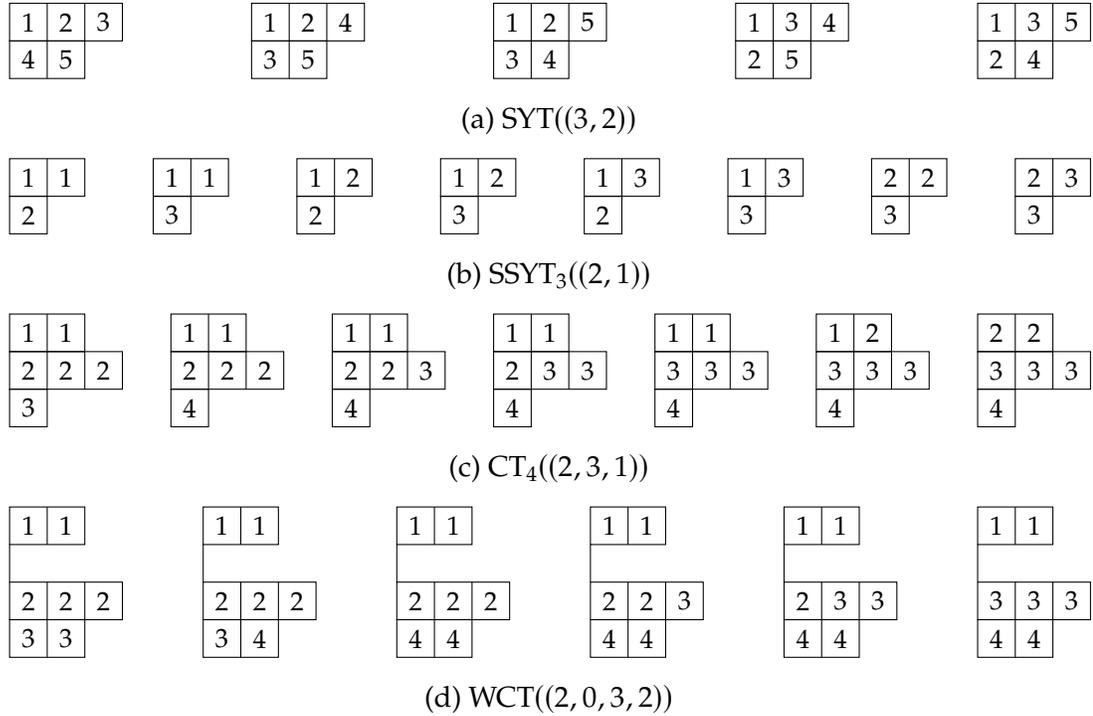
\centering
\begin{subfigure}{.99\textwidth}\centering
  \includestandalone{standard-young-tableaux-ex}
  \caption{$\SYT((3,2))$}
  \label{fig:standard-young-tableaux-ex}
\end{subfigure}
\par\smallskip
\begin{subfigure}{.99\textwidth}\centering
  \includestandalone{semistandard-young-tableaux-ex}
  \caption{$\SSYT_3((2,1))$}
  \label{fig:semistandard-young-tableaux-ex}
\end{subfigure}
\par\smallskip
\begin{subfigure}{.99\textwidth}\centering
  \includestandalone{composition-tableaux-ex}
  \caption{$\CT_4((2,3,1))$}
  \label{fig:composition-tableaux-ex}
\end{subfigure}
\par\smallskip
\begin{subfigure}{.99\textwidth}\centering
\includestandalone{weak-composition-tableaux-ex}
  \caption{$\WCT((2,0,3,2))$}
  \label{fig:weak-composition-tableaux-ex}
\end{subfigure}
\caption{Examples of tableaux.}\label{fig:all-tableaux-ex}
\end{figure}
\begin{itemize}
\item
A \emph{standard Young tableau} of shape $\lambda$, where $\lambda$ is a partition, is a tableau of shape $\lambda$ where each of the numbers $1$, \ldots, $\size{\lambda}$ appears exactly once.
\item
A \emph{semistandard Young tableau} of shape $\lambda$, where $\lambda$ is a partition, is a tableau of shape $\lambda$ with strictly increasing columns.
\item
A \emph{composition tableau} of shape $\lambda$, where $\lambda$ is a composition, is a tableau of shape $\lambda$ where for every $i < j$, each entry in row $i$ is strictly smaller than each entry in row $j$.
\item
A \emph{weak composition tableau} of shape $\lambda$, where $\lambda$ is a weak composition, is a tableau of shape $\lambda$ where for every $i < j$, each entry in row $i$ is strictly smaller than each entry in row $j$, and $i$ does not appear above the $i$th row.
\end{itemize}
We will denote the sets of these tableaux (with largest entry $n$) by $\SYT_n(\lambda)$, $\SSYT_n(\lambda)$, $\CT_n(\lambda)$ and $\WCT_n(\lambda)$, respectively. See \autoref{fig:all-tableaux-ex} for examples.
\begin{remark}
Note that our composition tableaux are different from what for example \cite{Other-composition-tableaux-paper} call composition tableau, 
and our weak composition tableaux are a special case of semi-skyline fillings (compare \cite[Proposition 2.9]{Skyline-slides-paper} with our \autoref{prop:assaf-searles-slide-definition}). We believe our definitions are useful as a mnemonic device (see \autoref{def:fundamental-quasisymmetric} and \autoref{def:slide-new}) and to point out connections to tableau complexes (see \autoref{ex:different-simplicial-complexes} and \autoref{thm:schur-splits-into-balls}).
\end{remark}
Let $\lambda$ be a partition or a (weak) composition and let $\sT$ be a set of tableaux of shape $\lambda$. 
Let $T \subset \lambda \times \bN$ be a set of pairs $(b,i)$, such that every $b \in \lambda$ is in at least one pair. We say that $T$ is a \emph{set-valued tableau}.
If there is a function $f \subseteq T$ with $f \fc \lambda \to \interval{n}$ that lies in $\sT$, then $T$ is a \emph{limit set-valued $\sT$ tableau}. If every such function lies in $\sT$, then $T$ is a \emph{set-valued $\sT$ tableau}.
We draw set-valued tableaux by by filling each box $b$ with the (nonempty) set of numbers $i$ such that $(b,i) \in T$.
See \autoref{fig:set-valued-tableaux-ex} for some examples.

\begin{figure}
  \centering
\includestandalone{set-valued-tableaux-ex}
  \caption[Three set-valued tableaux.]{Three set-valued tableaux. The first is a set-valued semistandard Young tableau. The second and third are limit set-valued semistandard Young talbeau.}
  \label{fig:set-valued-tableaux-ex}
\end{figure}

The \emph{content} of $T \subset \lambda \times \bN$ is the weak composition $\mu = (\mu_1, \dotsc, \mu_n)$, $\mu_i = \setsize{\setbuilder{b}{(b, i) \in T}}$, further, $x^T \defeq x^\mu$ and $\size{T} = \size{\mu}$.
The \emph{standardization} of a semistandard Young tableau $T$ with content $(c_1, \dotsc, c_n)$ is the standard Young tableau obtained by replacing the $1$s by $1$, \ldots, $c_1$, from left to right, the $2$s by $c_1+1$, \ldots, $c_1 + c_2$, also from left to right, and so on. 
The \emph{descent set} of a standard Young tableau $T$ is the set of $i$ that appear strictly above $i+1$ (not necessarily in the same column) in $T$.

There can be many standard and semistandard tableaux with the same shape and content. For example, the middle two tableaux in \autoref{fig:semistandard-young-tableaux-ex} both have content $(1,1,1)$ and shape $(2,1)$. On the other hand, since the entries have to weakly increase when reading from left to right, top to bottom, composition and weak composition tableaux are uniquely determined by their content. Hence we get the following.
\begin{lemma}\label{lemma:composition-tableaux-content}
Given a composition $\lambda$ and a weak composition $\mu$, $\mu$ is the content of a composition tableau of shape $\lambda$ if and only if $\flatt(\mu)$ refines $\lambda$.
\end{lemma}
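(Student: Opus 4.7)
The plan is to use the definitional observation that in a composition tableau, because entries in row $i$ are strictly smaller than entries in row $j$ whenever $i < j$, each value $k$ with $\mu_k > 0$ is confined to exactly one row, and these rows occur in increasing order of value. This means the content "block-decomposes" along the rows, and that is precisely the refinement condition.

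For the forward direction, I would start with a composition tableau $T$ of shape $\lambda = (\lambda_1, \dotsc, \lambda_l)$ with content $\mu$. Let $n_1 < n_2 < \dotsb < n_s$ be the positions of the non-zero entries of $\mu$, so $\flatt(\mu) = (\mu_{n_1}, \dotsc, \mu_{n_s})$. For each $k$, every box containing $n_k$ lies in a single row (if two boxes in rows $i < j$ both contained $n_k$, then the row-$i$ box would have to be strictly less than the row-$j$ box), and the row containing $n_k$ strictly precedes the row containing $n_{k+1}$ whenever both are present. Hence I can read off indices $0 = a_0 < a_1 < \dotsb < a_l = s$ so that row $i$ of $T$ is populated exactly by the values $n_{a_{i-1}+1}, \dotsc, n_{a_i}$. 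Counting boxes row by row yields $\mu_{n_{a_{i-1}+1}} + \dotsb + \mu_{n_{a_i}} = \lambda_i$, which is the refinement condition (rewriting $a_i$ as the corresponding index in $\comptoset{\flatt(\mu)} \subseteq \comptoset{\lambda}$).

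For the reverse direction, assume $\flatt(\mu)$ refines $\lambda$ and choose witnesses $0 = a_0 < a_1 < \dotsb < a_l$ with $\mu_{n_{a_{i-1}+1}} + \dotsb + \mu_{n_{a_i}} = \lambda_i$. Define $T$ by filling row $i$, from left to right, with $\mu_{n_{a_{i-1}+1}}$ copies of $n_{a_{i-1}+1}$, then $\mu_{n_{a_{i-1}+2}}$ copies of $n_{a_{i-1}+2}$, and so on. Within each row the entries are weakly increasing by construction, so $T$ is order preserving for the lexicographic order on $\lambda$. Across rows, the largest entry in row $i$ is $n_{a_i}$ and the smallest in row $i+1$ is $n_{a_i + 1} > n_{a_i}$, giving the strict row separation required of a composition tableau. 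The content of $T$ is $\mu$ by construction.

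There is no real obstacle here; the only point that needs a little care is checking that the "one value lives in one row" argument works in both directions, in particular that a composition tableau cannot split a value $k$ across two rows. Once that is spelled out, both directions are immediate from the bijection between compositions of $|\mu|$ and subsets of $\interval{|\mu| - 1}$ reviewed earlier in the section, applied to $\flatt(\mu)$ and $\lambda$.
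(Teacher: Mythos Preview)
Your proof is correct and follows exactly the idea the paper indicates: the paper does not give a separate proof of this lemma but simply records it as a consequence of the preceding remark that composition tableaux are uniquely determined by their content (since entries weakly increase reading left to right, top to bottom). Your argument makes that remark precise by spelling out that each value occupies a single row and that the row-by-row box count is exactly the refinement condition, which is precisely what the paper's one-line justification is gesturing at.
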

\begin{lemma}\label{lemma:weak-composition-tableaux-content}
Given weak compositions $\lambda$ and $\mu$, $\mu$ is the content of a weak composition tableau of shape $\lambda$ if and only if $\flatt(\mu)$ refines $\flatt(\lambda)$ and $\mu$ dominates $\lambda$.
\end{lemma}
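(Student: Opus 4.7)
The strategy is to analyze the constraints row by row. Write $\flatt(\lambda) = (l_1, \dotsc, l_r)$, let $k_1 < \dotsb < k_r$ be the positions of the nonempty rows of $\lambda$ (so $\lambda_{k_s} = l_s$), and let $j_1 < \dotsb < j_N$ be the nonzero indices of $\mu$.

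For the ``only if'' direction, suppose $T$ is a weak composition tableau of shape $\lambda$ with content $\mu$. Since entries in row $i$ are strictly less than entries in row $i+1$, each value $j$ with $\mu_j > 0$ appears in exactly one row; consequently, the values used in row $k_s$ form a consecutive block of the sequence $j_1, \dotsc, j_N$, and the $\mu$-sum of that block equals $l_s$. This is precisely the statement that $\flatt(\mu)$ refines $\flatt(\lambda)$. For dominance, the constraint that $i$ does not appear above row $i$ means every entry in rows $1$, \ldots, $i$ has value at most $i$, so there are at least $\lambda_1 + \dotsb + \lambda_i$ entries of $T$ with value $\leq i$, giving $\mu_1 + \dotsb + \mu_i \geq \lambda_1 + \dotsb + \lambda_i$.

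For the ``if'' direction, I would pick a refining partition $0 = t_0 < t_1 < \dotsb < t_r = N$ with $\sum_{t_{s-1} < t \leq t_s} \mu_{j_t} = l_s$ and construct $T$ by placing, in row $k_s$, exactly $\mu_{j_t}$ copies of the value $j_t$ for each $t$ with $t_{s-1} < t \leq t_s$, listed in weakly increasing order. The shape, the content, the weak increase within each row, and the strict increase between rows are all immediate. The only nontrivial condition remaining is that $j_{t_s} \leq k_s$ for every $s$, which says the largest value placed in row $k_s$ does not appear above its allowed row. For this I would apply dominance at $i = k_s$: the right-hand side equals $l_1 + \dotsb + l_s$, because the nonempty rows $\leq k_s$ are exactly $k_1$, \ldots, $k_s$. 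If instead $j_{t_s} > k_s$, then by monotonicity of the $j_t$ the positive summand $\mu_{j_{t_s}}$ is missing from $\mu_1 + \dotsb + \mu_{k_s}$, so the left-hand side is strictly less than $l_1 + \dotsb + l_s$, contradicting dominance.

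The main obstacle is identifying the right index $i = k_s$ at which to apply dominance so that the refinement partition aligns cleanly with $\lambda_1 + \dotsb + \lambda_{k_s}$ and forces $j_{t_s} \leq k_s$; otherwise the two conditions look independent and the construction in the ``if'' direction could appear to overshoot the row bound.
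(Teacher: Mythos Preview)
Your argument is correct. The paper does not give a detailed proof of this lemma; it only remarks that composition and weak composition tableaux are uniquely determined by their content (since the entries must weakly increase in the total reading order), and then states the lemma as an immediate consequence. Your proof makes this explicit: the unique filling determined by $\mu$ automatically handles the shape, the within-row weak increase, and the between-row strict increase via the refinement condition, and your dominance argument at $i = k_s$ is exactly what pins down the row bound $j_{t_s} \le k_s$. One small remark: since $\flatt(\mu)$ and $\flatt(\lambda)$ are compositions with strictly increasing partial sums, the refining indices $t_0 < t_1 < \dotsb < t_r$ are in fact uniquely determined, so there is no genuine choice in ``pick a refining partition''---this is consistent with the paper's observation that the tableau is determined by its content.
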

Set-valued weak composition tableau are not uniquely determined by their content, see \autoref{ex:tableau-kontent}, but not much extra information is needed.
We define the \emph{kontent} $\kappa$ of a set-valued weak composition tableau $T$ of shape $\lambda$ to be the content of $T$, with $\kappa_j$ bolded if $j$ occurs in a box together with some $i < j$.  
We note that the conditions for a $\kappa$ being a glide of $\lambda$ (\autoref{def:glide-of-composition}) are equivalent to $\kappa$ being the kontent of a set-valued weak composition tableau of shape $\lambda$. So we get the following.

\begin{figure}
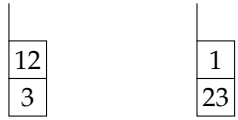

  \centering
  \includestandalone{set-valued-weak-composition-tableaux-content}
  \caption[Two set-valued weak composition tableaux.]{Two set-valued weak composition tableaux, with shape $(0,1,1)$ and content $(1,1,1)$. The left one has kontent $(1,\compred{1}, 1)$ and the right one has kontent $(1,1,\compred{1})$.}
  \label{fig:set-valued-weak-composition-tableaux-content}
\end{figure}

\begin{lemma}\label{lemma:set-valued-weak-composition-tableaux-kontent}
  Given a weak composition $\lambda$ and a weak komposition $\kappa$, $\kappa$ is the kontent of a set-valued weak composition tableau of shape $\lambda$ if and only if $\kappa$ is a glide of $\lambda$.
\end{lemma}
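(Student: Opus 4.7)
The plan is to prove both implications by an explicit construction.

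For the forward direction, given a set-valued weak composition tableau $T$ with kontent $\kappa$, set $i_0 \defeq 0$ and let $i_j$ be the largest value appearing in row $n_j$ of $T$. The strict row-to-row increase of a weak composition tableau forces $i_1 < i_2 < \dotsm < i_l$. The bound $i_j \le n_j$ in the definition of glide is the ``$v$ does not appear above row $v$'' rule applied to the maximum entry of row $n_j$. The condition that the first nonzero entry of $(\kappa_{i_{j-1}+1}, \dotsc, \kappa_{i_j})$ is not bold holds because that entry is the smallest value in row $n_j$, and everything strictly smaller lives in earlier rows, so it cannot share a box with anything smaller.

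The main work in this direction is the counting identity. Since any value $v \in [i_{j-1}+1, i_j]$ lies only in row $n_j$, the sum $\kappa_{i_{j-1}+1} + \dotsm + \kappa_{i_j}$ equals the total number of (box, value) pairs in row $n_j$, which is $\lambda_{n_j} + \sum_b (\size{T(b)} - 1)$. It then suffices to show that the total number of non-minimum entries in row $n_j$ equals the excess of $(\kappa_{i_{j-1}+1}, \dotsc, \kappa_{i_j})$. The key observation is that a value $v$ can be a non-minimum entry of at most one box of a given row: if $v$ sits in two boxes $b_1 < b_2$ of the same row, then $\max T(b_1) \le \min T(b_2)$ together with $v \in T(b_1) \cap T(b_2)$ forces $v = \min T(b_2)$. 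Hence non-minimum entries biject with bold values in $[i_{j-1}+1, i_j]$.

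For the reverse direction, given a witness $(i_j)$ that $\kappa$ is a glide of $\lambda$, build $T$ row by row, using only values from $[i_{j-1}+1, i_j]$ in row $n_j$. Define the primary multiplicity $\alpha_v \defeq \kappa_v$ when $\kappa_v$ is not bold, and $\alpha_v \defeq \kappa_v - 1$ when it is bold; the counting identity gives $\sum_v \alpha_v = \lambda_{n_j}$, and these primaries fill the boxes of the row when listed in weakly increasing order as $q_1 \le q_2 \le \dotsm \le q_{\lambda_{n_j}}$. For each bold value $v$ in the block, insert $v$ as an extra into box $b_r$, where $r$ is the largest index with $q_r < v$. The ``first nonzero not bold'' condition ensures such an $r \ge 1$ exists, and the sorted order of primaries guarantees $q_{r+1} \ge v$ (or $r = \lambda_{n_j}$), so the inter-box inequality $\max T(b_r) \le \min T(b_{r+1})$ is preserved. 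The bound $i_j \le n_j$ ensures all entries in row $n_j$ are $\le n_j$, matching the ``$i$ does not appear above row $i$'' requirement.

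The main obstacle is making sure the insertion step is internally consistent: several bold values may wish to land in the same box, and a bold $v$ with $\kappa_v \ge 2$ must not be inserted into one of its own primary boxes. Both concerns dissolve once one notes that all bold values placed in $b_r$ satisfy $q_r < v \le q_{r+1}$, so they are distinct from each other and from the primary $q_r$, and for a bold $v$ with $\kappa_v \ge 2$ the primary occurrences of $v$ form a run starting at some $q_{r'} = v$, so the insertion lands in $b_{r'-1}$ rather than anywhere in the run. The remaining checks --- correct content, correct bolding, and the row-to-row comparisons --- are then immediate from the construction.
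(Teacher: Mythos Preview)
Your proof is correct. The paper does not give a formal proof of this lemma at all: it simply asserts, in the sentence preceding the statement, that the three conditions in \autoref{def:glide-of-composition} are ``equivalent to $\kappa$ being the kontent of a set-valued weak composition tableau of shape $\lambda$,'' and then illustrates this with \autoref{ex:tableau-kontent}. Your argument is precisely the explicit verification the paper leaves to the reader; in particular, your choice $i_j = \max(\text{row }n_j)$ in the forward direction and your primary/extra filling in the reverse direction are the natural constructions, and your observation that a value $v$ can be a non-minimum entry of at most one box in a row is exactly what makes the excess count match.
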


\begin{example}
\label{ex:tableau-kontent}
  Let $\lambda = (0,1,0,0,0,2)$, \autoref{fig:set-valued-weak-composition-tableaux-glide-ex} 
shows three set-valued tableaux of shape $\lambda$. The first two are set-valued weak composition tableaux of shape $\lambda$, but the crossed out one is not, because the $3$ is in row $2$.
So the kontents of the first two tableaux, $(1, 0, 1, 0, \compred{1}, \compred{3})$ and $(1, \compred{1}, 0, 2, 0, \compred{2})$ are glides of $\lambda$. If we tried to make a set-valued weak composition tableau of shape $\lambda$ and kontent $\kappa = (0, 1, \compred{1}, 1, 2, 0)$, we would end up with the third tableau, so $\kappa$ is not a glide of $\lambda$. Compare with \autoref{ex:glide-of-composition}.
\end{example}

\begin{figure}
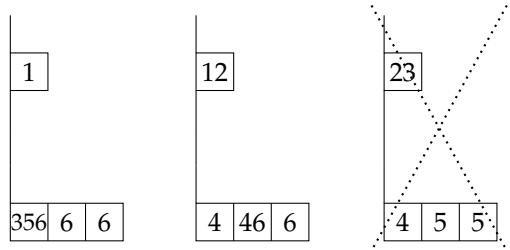

  \centering
  \includestandalone{set-valued-weak-composition-tableaux-glide-ex}
  \caption{Three set-valued tableaux with shape $(0,1,0,0,0,3)$.}
  \label{fig:set-valued-weak-composition-tableaux-glide-ex}
\end{figure}

\subsection{Pipe dreams}\label{sec:pipe-dreams}
Pipe dreams were introduced by Bergeron and Billey \cite{RC-graphs-and-schubert-polynomials} (they called them \emph{rc-graphs}). 
A \emph{pipe dream} $P$ is a tiling of an $n \times n$ matrix using \emph{cross} and \emph{elbow} tiles (see \autoref{cross-and-elbow-tile}), with crosses only appearing above the main antidiagonal. When we draw pipe dreams, we leave out the pipes connecting to the right and bottom edges. 
Above the upper edge, we write the numbers $1$ through $n$. On the left edge, we write the permutation $\pi \in S_n$ that is the result of following the pipes from the upper edge, but treating crosses where the pipes have already crossed (that is, where the label on the horizontal pipe is smaller than the label on the vertical pipe) as elbows. We say that $P$ is a pipe dream for $\pi$.
If no pair of pipes cross twice, the pipe dream is \emph{reduced}. The set of all reduced pipe dreams for $\pi$ is denoted by $\PD(\pi)$.
The \emph{excess} of a non-reduced pipe dream is the number of crosses where the pipes have already crossed.

The \emph{reading word} of $P$ is the word recording on which antidiagonal each cross lies, going from right to left, top to bottom.
The permutation on the left side of the pipe dream is the Demazure product of the reading word. If $P \in \PD(\pi)$, then the reading word for $P$ is a reduced word for $\pi$. If we record the row in which the crosses appear when finding the reading word, we get a compatible sequence for it, and this gives a bijection between pipe dreams and pairs consisting of a word $w$ and a compatible sequence for $w$. 
Given a pipe dream $P$, with $c_i$ crosses in the $i$th row, the \emph{weight} $\wt(P)$ of $P$ is the weak composition $(c_1, \dotsc, c_k)$, and $x^P \defeq x^{\wt(P)}$.

\begin{figure}
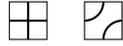

\centering
\includestandalone{elbow-cross-no-half-elbow}
\caption{A cross tile and  an elbow tile.\label{cross-and-elbow-tile}}
\end{figure}

All the information contained in a pipe dream is contained in its set of crosses, so we generally identify a pipe dream with its set of crosses and sometimes draw just the crosses. For example, we say that $P$ contains $P'$, $P \supset P'$, if the crosses of $P'$ are a subset of the crosses of $P$.
Every pipe dream for a permutation $\pi \in S_n$ is contained in the pipe dream for the triangular word in $S_n$, and this gives a bijection between pipe dreams for $\pi$ and subwords of the triangular word. 

\begin{figure}
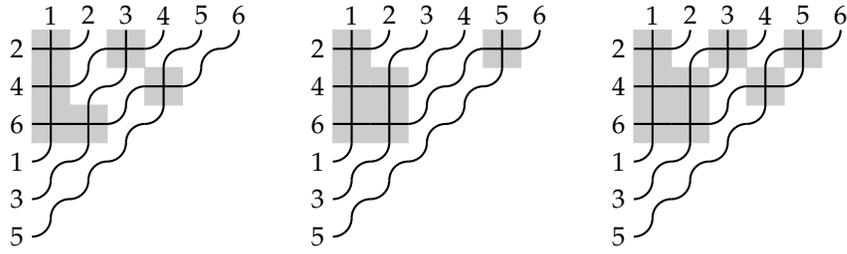

\centering
\includestandalone{pipe-dream-ex}
\caption{Two reduced and one non-reduced pipe dream for $[246135]$.}
\label{fig:pipe-dream-picture}
\end{figure}

\begin{example}
  Two reduced and one non-reduced pipe dream for $[246135]$ are shown in \autoref{fig:pipe-dream-picture}. They have reading words $315243$, $513243$ and $53153243$, respectively. The two reduced pipe dreams have the compatible sequence $112233$ and the non-reduced one has compatible sequence $11122233$. They correspond to the subwords $\dash\dash3\dash15\dash\dash2\dash43\dash\dash\dash$, $5\dash\dash\dash1\dash\dash32\dash43\dash\dash\dash$ and $5\dash3\dash15\dash32\dash43$ of the triangular word $543215432543545$, respectively.
\end{example}

Let $P$ be a pipe dream that has a cross at one of $(i + 1, j)$ and $(i, j + k + 1)$ and an elbow at the other, elbows at $(i, j)$ and $(i + 1, j + k + 1)$ and crosses at $(i, b)$ and $(i+1, b)$ for all $b$ with $j < b < j + k + 1$. 
A \emph{chute move} swaps the tiles at $(i + 1, j)$ and $(i, j + k + 1)$.
The transpose of a chute move is a \emph{ladder move}, and when $k = 0$, the two coincide. See \autoref{fig:chute-ladder-move}. 
\begin{figure}
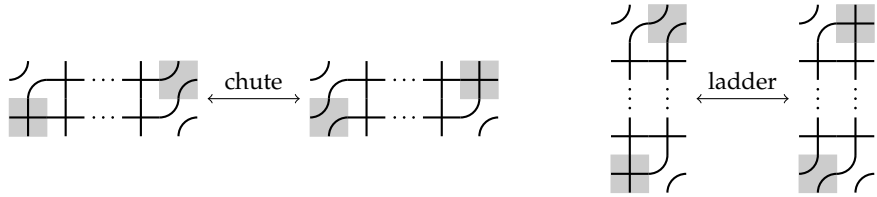

\centering
\includestandalone{chute-ex}
\caption{A chute move and a ladder move. \label{fig:chute-ladder-move}}
\end{figure}
The chute and ladder moves do not change the permutation. Further, given a reduced pipe dream $P$ for a permutation $\pi$, we can find all other reduced pipe dreams for $\pi$ by doing chute and ladder moves to $P$ \cite[Corollary 3.8]{RC-graphs-and-schubert-polynomials}.
In order to find all the pipe dreams for $\pi$, it is thus enough to find one pipe dream for $\pi$ and perform chute and ladder moves to it. One starting point for this process is the pipe dream with crosses at $(i, j)$ for $j \le l_i$, where $(l_1, \dots, l_k)$ is the Lehmer code of $\pi$. This pipe dream is called the \emph{bottom pipe dream} for $\pi$.

However, the chute and ladder moves can change the reading word, and in \autoref{sec:slides-for-words} we will be interested in the sets of pipe dreams with a given reading word. Each of these sets has an analogue of the bottom pipe dream, introduced by Assaf and Searles \cite[Definition 3.10]{Slide-paper} and extended to non-reduced pipe dreams by Pechenik and Searles \cite[Definition 2.11]{Glide-paper}.
A possibly non-reduced pipe dream is \emph{\qY/} if the leftmost cross in each row lies either in the first column or weakly to the left of some cross in the row below.
The \qY/ reduced pipe dreams for $[135624]$ are shown in \autoref{fig:qy-pipe-dreams}.
\begin{figure}
\centering
\includestandalone{qy-example}
\caption{All the \qY/ reduced pipe dreams for $[135624]$.} \label{fig:qy-pipe-dreams}
\end{figure}
\begin{lemma}[\protect{\cite[Lemma 3.12]{Slide-paper}}]\label{lemma:quasi-yamanouchi-to-word}
  For each word $w$ for $\pi$ that is a subword of the triangular word, there is exactly one \qY/ reduced pipe dream with reading word $w$.
\end{lemma}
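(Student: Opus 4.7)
The plan is to construct, from $w = w_1 \dotsm w_m$, a specific compatible sequence $(j_k)$ whose pipe dream is \qY/, and then to show it is unique. Both steps are handled by reverse induction on $k$.

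For the construction, set $j_m \defeq w_m$ and, for $k < m$,
\[
j_k \defeq \begin{cases}\min(w_k, j_{k+1}) & \text{if } w_k \ge w_{k+1},\\ \min(w_k, j_{k+1}-1) & \text{if } w_k < w_{k+1}.\end{cases}
\]
The three compatibility conditions are immediate from the recursion, and $j_k \ge 1$ follows because $w$ has at least one compatible sequence (inherited from the triangular word). One checks by reverse induction that $(j_k)$ is in fact the componentwise maximum among all compatible sequences for $w$. To verify the associated pipe dream $P$ is \qY/, fix a row $i$ containing a cross, let $k^*$ be the last index with $j_{k^*} = i$, and observe that the leftmost cross of row $i$ sits at $(i, w_{k^*}-i+1)$ since $w$ is weakly decreasing within a row. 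A short case split on the clause of the recursion defining $j_{k^*}$ (treating separately $k^* = m$, $j_{k^*+1} = i+1$, and $j_{k^*+1} \ge i+2$) shows that either $w_{k^*} = i$ (leftmost cross in column $1$) or $j_{k^*+1} = i+1$ with $w_{k^*+1} > w_{k^*}$, placing a cross at $(i+1, w_{k^*+1}-i)$ that supplies the required support.

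For uniqueness, let $(j'_k)$ be any \qY/ compatible sequence; I show $j'_k = j_k$ by reverse induction. The base case $k = m$ is forced because the bottom row has no row below it, so \qY/ places its leftmost cross, at $(j'_m, w_m-j'_m+1)$, in column $1$, giving $j'_m = w_m$. For the inductive step, assume $j'_{k'} = j_{k'}$ for $k' > k$ but suppose $j'_k < j_k$, and set $i \defeq j'_k$. Since $j$ is non-decreasing and $j_k > i$, we have $j_{k'} > i$ for all $k' > k$, so the leftmost cross of row $i$ in the pipe dream of $(j'_k)$ is the one at position $k$, in column $w_k-i+1 > 1$ (using $w_k \ge j_k > i$). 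The \qY/ condition then supplies some $k' > k$ with $j_{k'} = i+1$ and $w_{k'} > w_k$; take the smallest such $k'$. If $k' = k+1$, then $w_k < w_{k+1}$ triggers the strict clause of the recursion and forces $j_k \le \min(w_k, i) = i$, contradicting $j_k > i$. If instead $k' > k+1$, then $w_{k'-1} \le w_k < w_{k'}$ by minimality of $k'$, forcing $j_{k'-1} < j_{k'} = i+1$ by compatibility, yet $j_{k'-1} \ge j_{k+1} \ge j_k > i$ by monotonicity, again a contradiction.

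The main obstacle is the uniqueness step: specifically, choosing the witness $k'$ correctly so that both subcases can be tied back to the local recursion defining $(j_k)$; the rest of the argument is bookkeeping about rows, columns, and positions in the pipe dream.
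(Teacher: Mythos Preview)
The paper does not supply its own proof of this lemma; it is quoted verbatim from \cite[Lemma~3.12]{Slide-paper}. So there is no in-paper argument to compare against. That said, your proof is correct and is essentially the standard one: the compatible sequence you build is the componentwise-maximal one (what Assaf--Searles call the \emph{destandardization}), and the content of the lemma is precisely that ``maximal compatible sequence'' and ``quasi-Yamanouchi'' coincide. Your existence case-split and your uniqueness argument via the minimal witness $k'$ both go through as written.

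Two small remarks on presentation. First, the positivity $j_k \ge 1$ really does rely on maximality (so that $j_k$ dominates the compatible sequence inherited from the triangular word); you state these in the reverse order, but since your maximality induction does not use positivity, no circularity arises. Second, the lemma as stated in the paper is implicitly about \emph{reduced} words: if $w$ is not reduced there is no reduced pipe dream with reading word $w$, and moreover your construction could place two crosses in the same cell when $w_k = w_{k+1}$. Your argument is fine once one reads ``word for $\pi$'' as ``reduced word for $\pi$,'' which is how the paper uses the lemma immediately afterward.
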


\section{Simplicial complexes}
Here we introduce the relevant facts concerning simplicial complexes and the two classes of simplicial complexes we are interested in.
\subsection{Basic definitions}
A \emph{simplicial complex} $\Delta$ on a finite set $X$ is a downward closed collection of sets $F \subseteq X$, called \emph{faces}. 
That is, if $F \in \Delta$ and $F' \subset F$, then $F' \in \Delta$. A maximal face is called a \emph{facet}. The dimension of a face $F$ is $\dim F = \size{F} - 1$, and if $\dim F = d$, $F$ is called a \emph{$d$-face}. The $0$-faces are called \emph{vertices}, and we generally write $v$ for the set $\{v\}$. If all facets of $\Delta$ have the same dimension $n$, $\Delta$ is \emph{pure} and we say that $\dim\Delta = n$, otherwise, $\dim\Delta$ is the maximum dimension of its facets. The simplicial complex consisting of all subsets of a $d$-face is called a \emph{$d$-simplex}. An element $x \in X$ that does not lie in any face of $\Delta$ is a \emph{phantom vertex}.

Let $S$ be a polynomial ring with variables $x_v$ for each $v \in X$. The \emph{Stanley--Reisner ideal} of $\Delta$ is the ideal $I_\Delta$ generated by the products $\prod_{v \in F} x_v$ for each $F \subset X$ that is not a face of $\Delta$. The \emph{Stanley--Reisner ring} of $\Delta$ is the ring $\SRring{\Delta} \defeq S/I_\Delta$.
The \emph{geometric realization} $\abs{\Delta}$ of $\Delta$ is the intersection of $\Spec\SRring{\Delta}$ with the \emph{standard simplex} $\setbuilder{\sum a_v x_v}{\text{$\sum a_v = 1$ and $0 \le a_v \in \bR$}}$. 
A set $F = \{x_{v_1}, \dotsc, x_{v_k}\}$ is a face of $\Delta$ if and only if the convex hull of $F$ is contained in the geometric realization.
We say that a simplicial complex is a \emph{ball} or a \emph{sphere}, respectively, if its geometric realization is a ball or a sphere, respectively. 
If the geometric realization of $\Delta$ is a topological manifold with boundary, the \emph{boundary} of $\Delta$ consists of the faces corresponding to the boundary of $\abs{\Delta}$.

The \emph{deletion} $\del_F(\Delta)$ of a face $F$ from a simplicial complex $\Delta$ is the simplicial complex consisting of all the faces in $\Delta$ that do not intersect $F$:
\[
\del_F(\Delta) \defeq \{\,G \in \Delta \st G \cap F = \emptyset\,\},
\]
and the \emph{link} $\link_F(\Delta)$ is the simplicial complex consisting of all the faces in $\Delta$ that do not intersect $F$, but which together with $F$ form a face of $\Delta$:
\[
\link_F(\Delta) \defeq \{\,G \in \Delta \st G \cap F = \emptyset \text{ and } G \cup F \in \Delta\,\}.
\]
See \autoref{fig:simplicial-deletion-link} for an example.

\begin{figure}
  \centering
  \begin{subfigure}{0.32\textwidth}\centering
    \includestandalone[width=0.95\textwidth]{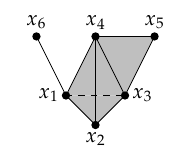}
    \caption{$\Delta$}
  \end{subfigure}
  \hfill
  \begin{subfigure}{0.32\textwidth}\centering
    \includestandalone[width=0.95\textwidth]{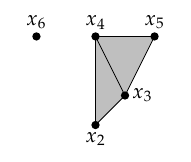}
    \caption{$\del_{x_1}\Delta$}
  \end{subfigure}
  \hfill
  \begin{subfigure}{0.32\textwidth}\centering
    \includestandalone[width=0.95\textwidth]{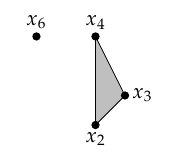}
    \caption{$\link_{x_1}\Delta$}
  \end{subfigure}
  \caption[A simplicial complex and the deletion and link of a vertex.]{The simplicial complex $\Delta$ with facets $\{x_1, x_2, x_3, x_4\}$, $\{x_1, x_6\}$ and $\{x_3, x_4, x_5\}$, and the deletion and link of $x_1$. \label{fig:simplicial-deletion-link}}
\end{figure}

Let $\Delta$ be a simplicial complex on $X$ with facets $F_1$, \ldots, $F_n$, and let $v \notin X$. The \emph{cone} from $v$ on $\Delta$ is the simplicial complex on $X \cup \{v\}$ with facets $F_1 \cup \{v\}$, \ldots, $F_n \cup \{v\}$.
If a vertex $v$ lies in every facet of $\Delta$, we say that $v$ is a \emph{cone vertex} of $\Delta$. In that case, $\del_v(\Delta) = \link_v(\Delta)$, and $\Delta$ is the cone from $v$ on $\del_v(\Delta)$.

\begin{example}
  Let $\Delta$ be the simplicial complex in \autoref{fig:non-vertex-decomposable}, with facets $\{x_1, x_2, x_3\}$ and $\{x_3, x_4, x_5\}$. Then, $x_3$ is a cone vertex of $\Delta$ and $\Delta$ is the cone from $x_3$ on the simplicial complex with facets $\{x_1, x_2\}$ and $\{x_4, x_5\}$.
\end{example}
Vertex-decomposability was introduced by Billera and Provan \cite{Vertex-decomposability-paper}. There are many equivalent formulations of it. The following is from \cite{Subword-complexes-paper}.
\begin{definition}\label{def:vertex-decomposable}
A simplicial complex $\Delta$ is \emph{vertex-decomposable}  if it is pure and either 
$\Delta = \{\emptyset\}$, or there is a vertex $v$ such that $\del_v(\Delta)$ and $\link_v(\Delta)$ are both vertex-decomposable.
\end{definition}

\begin{example}
  The $d$-simplex $\Delta$ on $\{x_1, \dotsc, x_d\}$ is vertex-decomposable: both $\del_{x_d}(\Delta)$ and $\link_{x_d}(\Delta)$ equal the $(d-1)$-simplex on $\{x_1, \dotsc, x_{d-1}\}$, which is vertex-decomposable by induction.
\end{example}
\begin{example}
  The simplicial complex $\Delta$ on $\{x_1, \dotsc, x_5\}$ with facets $\{x_1,x_2,x_3\}$ and $\{x_3, x_4, x_5\}$ shown in \autoref{fig:non-vertex-decomposable} is pure but is not vertex-decomposable: 
$\del_{x_i}(\Delta)$ is not pure for $i \neq 3$, and $\del_{x_3}(\Delta)$ is not vertex-decomposable, since none of the $\del_{x_i}(\del_{x_3}(\Delta))$ are pure.
\end{example}
\begin{figure}
\centering
\includestandalone[width=.3\textwidth]{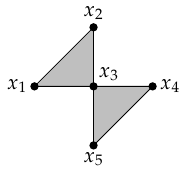}
\caption[A pure simplicial complex that is not vertex-decomposable.]{A pure simplicial complex that is not vertex-decomposable. The vertex $x_3$ is a cone vertex.}
\label{fig:non-vertex-decomposable}
\end{figure}

A $d$-dimensional simplicial complex $\Delta$ is \emph{shellable} if it is pure and there is an ordering $F_1$, \ldots, $F_n$ of its facets such that the intersections $F_j \cap \left(\bigcup_{i = 1}^{j-1} F_i\right)$ are nonempty, pure and $(d-1)$-dimensional.
Every vertex-decomposable simplicial complex is shellable \cite{Vertex-decomposability-paper}. Basically, a shelling for $\link_v(\Delta)$, coned from $v$, plus a shelling for $\del_v(\Delta)$, gives us a shelling of $\Delta$. 
We will use the following theorem to prove that simplicial complexes are balls or spheres. 

\begin{theorem}[\protect{\cite[Proposition 4.7.22]{Oriented-matroids}}]\label{thm:ball-sphere}
 A shellable simplicial complex where every codimension $1$ face appears in at most two facets is a ball or a sphere. If every codimension $1$ face appears in exactly two facets it is a sphere, otherwise it is a ball and 
the codimension $1$ faces that appear in only one facet are in its boundary. 
\end{theorem}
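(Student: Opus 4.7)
The plan is to induct on the length $n$ of the shelling $F_1, \dots, F_n$, showing that each partial union $\Delta_k := F_1 \cup \dots \cup F_k$ is a $d$-ball (with the possible exception of $k = n$, where it may instead be a $d$-sphere), while maintaining the invariant that $\partial \Delta_k$ is exactly the set of codimension $1$ faces of $\Delta_k$ lying in only one of $F_1, \dots, F_k$.

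The base case $k = 1$ is immediate: $\Delta_1 = F_1$ is a $d$-simplex, a $d$-ball with boundary $\partial F_1$. For the inductive step, let $I_k := F_k \cap \Delta_{k-1}$. By shellability, $I_k$ is a pure $(d-1)$-dimensional subcomplex of $\partial F_k \cong S^{d-1}$, and it inherits a shelling from that of $\Delta$. Split into two cases: either $I_k = \partial F_k$ (Case A) or $I_k \subsetneq \partial F_k$ (Case B). In Case B, a nested induction on dimension (applied to the shellable proper $(d-1)$-subcomplex $I_k$ of the sphere $\partial F_k$) shows $I_k$ is a $(d-1)$-ball, which by the boundary invariant sits inside $\partial \Delta_{k-1}$; gluing the simplex $F_k$ to the $d$-ball $\Delta_{k-1}$ along this $(d-1)$-ball in its boundary again yields a $d$-ball, and a direct check propagates the boundary invariant. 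In Case A, the at-most-two hypothesis combined with the boundary invariant forces $\partial \Delta_{k-1} = \partial F_k$, so attaching $F_k$ closes $\Delta_{k-1}$ into a $d$-sphere; this same hypothesis also prevents Case A from occurring before $k = n$, since afterwards the boundary is empty and no further facet could attach along a nonempty $(d-1)$-dimensional intersection without forcing some codim-$1$ face to lie in three facets.

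Combining these, $\Delta = \Delta_n$ is a $d$-sphere precisely when Case A occurs at $k = n$, equivalently precisely when every codim-$1$ face of $\Delta$ lies in exactly two facets; otherwise only Case B has occurred, $\Delta$ is a $d$-ball, and the boundary invariant at $k = n$ yields the remaining claim about the boundary.

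The main obstacle will be the gluing step in Case B: attaching a $d$-simplex to a $d$-ball along a $(d-1)$-ball in its boundary to obtain a $d$-ball. This is a standard PL-topology fact (a form of Newman's theorem), which I would cite directly or prove inductively by extending an explicit PL homeomorphism facet-by-facet along a shelling of $I_k$. The nested induction setting up Case B also requires care: one must verify that the shellable proper subcomplex $I_k$ of $\partial F_k$ is a strictly simpler instance of the theorem so the inductive hypothesis applies, and that the shelling of $\Delta$ really does restrict to one on $I_k$.
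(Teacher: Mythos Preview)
The paper does not prove this theorem; it is quoted from \cite{Oriented-matroids} and used as a black box, so there is no argument in the paper to compare against. That said, your sketch is the standard proof and is essentially correct.

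Two small points worth tightening. First, the claim that $I_k$ ``inherits a shelling from that of $\Delta$'' is not quite the right justification: the facets of $I_k$ are a subset of the $d+1$ codimension~$1$ faces of the simplex $F_k$, and any ordering of those is a shelling simply because any two of them meet in a $(d-2)$-face; the shelling of $\Delta$ plays no role here. Second, in Case~A you assert $\partial\Delta_{k-1}=\partial F_k$ from $\partial F_k\subset\partial\Delta_{k-1}$; this needs the observation that $\partial\Delta_{k-1}$ is a PL $(d-1)$-sphere (hence a pseudomanifold), so a full-dimensional $(d-1)$-sphere subcomplex cannot be proper. Both are easy to supply, and you have already flagged the genuine technical input (the PL gluing lemma/Newman's theorem) as something to cite.
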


\subsection{Subword complexes}
\label{sec:subword-complexes}
Subword complexes were introduced by Knutson and Miller \cite{Grobner-geometry-paper} to explain the combinatorics of determinantal ideals and Schubert polynomials, and were further studied in \cite{Subword-complexes-paper}, where they proved that subword complexes are balls or spheres. 

Let $\pi$ be an element of a Coxeter group and $Q$ a word of length $n$.
We identify subwords of $Q$ with subsets of the positions in $Q$. 
Given two subwords of $Q$ that contain $\pi$, their union also contains $\pi$, while their intersection might not. Similarly, if $P$ contains $\pi$ and $P' \supset P$, then $P'$ also contains $\pi$. So if we take complements inside $Q$, we get a simplicial complex, the \emph{subword complex} $\Delta(Q, \pi)$. More precisely, the faces of $\Delta(Q, \pi)$ are the subwords whose complements contain $\pi$.
The facets of $\Delta(Q, \pi)$ are the complements of subwords representing $\pi$, and the vertices are the individual letters of $Q$ (a vertex is a phantom vertex if it appears in every word for $\pi$).

\begin{example}
  Let $Q = 321323$ and $\pi = [1432] \in S_n$. There are two reduced words for $\pi$: $232$ and $323$, so the facets of $\Delta(321323, [1432])$ in $S_n$ correspond to the complements of the embeddings $\dash2\dash32\dash$, $32\dash3\dash\dash$, $32\dash\dash\dash3$, $3\dash\dash\dash23$ and $\dash\dash\dash323$, that is: $3\dd1\dd\dd3$, $\dd\dd1\dd23$, $\dd\dd132\dd$, $\dd213\dd\dd$ and $321\dd\dd\dd$.
  On the other hand, the facets of $\Delta(321323, 323)$ in $W_n$ correspond to only the complements of the embeddings of $323$ into $Q$, that is, all but the first one above. 
  The two complexes are shown in \autoref{fig:subword-complex-ex}.
\end{example}

\begin{figure}
  \centering
  \begin{subfigure}{0.49\textwidth}\centering
    \includestandalone[width=\textwidth]{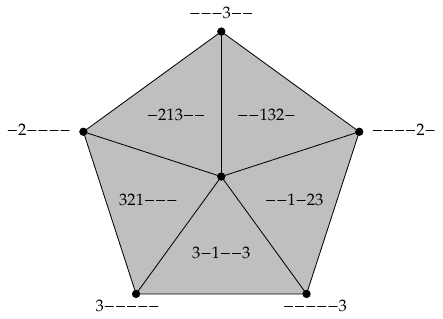}
    \caption{$\Delta(321323, [1432])$}
  \end{subfigure}
  \hfill
  \begin{subfigure}{0.49\textwidth}\centering
    \includestandalone[width=\textwidth]{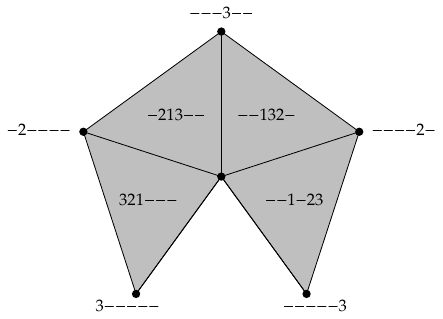}
    \caption{$\Delta(321323, 323)$}
  \end{subfigure}
  \caption[$\Delta(321323, \sqbl1432\sqbr)$ in $S_n$ and $\Delta(321323, 323)$ in $W_n$.]{The subword complexes $\Delta(321323, [1432])$ in $S_n$ and $\Delta(321323, 323)$ in $W_n$. We have labeled the facets and the vertices (except the center vertex $\dash\dash1\dash\dash\dash$).}
  \label{fig:subword-complex-ex}
\end{figure}

The following steps show that subword complexes are balls or spheres. We will modify \autoref{thm:subword-complex-vertex-decomposable} in \autoref{sec:which-sets-give-balls}, so we are including the proof here. 

\begin{theorem}[\protect{\cite[Theorem E]{Grobner-geometry-paper}}]\label{thm:subword-complex-vertex-decomposable}
  The subword complex $\Delta(Q,\pi)$ is vertex-decom\-posable. In particular, it is shellable. 
\end{theorem}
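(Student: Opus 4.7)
The plan is to induct on $|Q|$, following the strategy of picking the vertex $v$ corresponding to the first letter $s$ of $Q$ and analyzing $\link_v \Delta(Q,\pi)$ and $\del_v \Delta(Q,\pi)$ separately. For the base case $|Q|=0$, the complex is either empty (when $\pi \neq \id$) or the irrelevant complex $\{\emptyset\}$ (when $\pi = \id$), both of which satisfy \autoref{def:vertex-decomposable} trivially.

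For the inductive step, I would first verify purity: every facet is the complement in $Q$ of a reduced word (as a subword) for $\pi$, hence has cardinality $|Q| - \len(\pi)$. Writing $Q = sQ'$, I claim $\link_v \Delta(Q, \pi) = \Delta(Q', \pi)$, because a face $F \subset Q'$ of the link corresponds to $F \cup \{v\}$ being a face of $\Delta(Q,\pi)$, whose complement in $Q$ is precisely $Q' \setminus F$.

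For $\del_v \Delta(Q,\pi)$, I would split into two cases based on whether $s\pi > \pi$ or $s\pi < \pi$. In the first case, any reduced word for $\pi$ that uses position $v$ would begin with $s$, forcing $s\pi < \pi$; so no reduced word for $\pi$ inside $Q$ uses $v$, meaning $v$ lies in every facet and is a cone vertex. Therefore $\del_v \Delta(Q,\pi) = \link_v \Delta(Q,\pi) = \Delta(Q',\pi)$. In the second case, for any subword $R$ of $Q'$, the word $sR$ contains $\pi$ iff $R$ contains $\pi$ or $R$ contains $s\pi$; by the subword property of Bruhat order, the former implies the latter (since $s\pi \leq \pi$), so $\del_v \Delta(Q,\pi) = \Delta(Q', s\pi)$.

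In both cases the link and the deletion are subword complexes on the shorter word $Q'$, so the inductive hypothesis finishes the argument, and shellability follows from \cite{Vertex-decomposability-paper}. The main obstacle is the identification of $\del_v$ in the case $s\pi < \pi$: one must verify that allowing the prepended $s$ to participate in the contained reduced word changes nothing beyond replacing $\pi$ with $s\pi$, which is exactly where the subword characterization of Bruhat order is essential.
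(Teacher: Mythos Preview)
Your proof is correct and follows essentially the same approach as the paper's: choose the first letter of $Q$ as the vertex, identify the link with $\Delta(Q',\pi)$, and identify the deletion with either $\Delta(Q',\pi)$ or $\Delta(Q',s\pi)$ according to whether $s\pi > \pi$ or $s\pi < \pi$. You supply a bit more detail (the base case, the purity check, and the explicit appeal to the subword characterization of Bruhat order for the case $s\pi < \pi$), but the argument is otherwise identical.
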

\begin{proof}
  Let $Q = (\sigma, \sigma_1, \dotsc, \sigma_n)$,  and $Q' = (\sigma_1, \dotsc, \sigma_n)$. The link of $\sigma$ consists of the faces $P$ of $\Delta(Q, \pi)$ that do not contain $\sigma$, and $P \cup \sigma$ is a face of $\Delta(Q, \pi)$. That is, it consists of those faces $P$ of $\Delta(Q, \pi)$ whose complements $(Q \setminus P)$ are of the form $\sigma R$, where $R$ contains a word for $\pi$. So the link is isomorphic to $\Delta(Q', \pi)$.

The deletion of $\sigma$ consists of faces $P$ of $\Delta(Q, \pi)$ that do not contain $\sigma$. So the complement $(Q \setminus P)$ is of the form $\sigma R$, where $\sigma R$ contains a word for $\pi$. Now there are two cases, depending on whether $\len(\sigma \pi)$ is equal to $\len(\pi) + 1$ or $\len(\pi) - 1$. 

If $\len(\sigma \pi) = \len(\pi) + 1$, then no word for $\pi$ begins with $\sigma$, and therefore, if $\sigma R$ contains a word for $\pi$, the word has to be fully contained in $R$. So the deletion of $\sigma$ is equal to the link, and is isomorphic to $\Delta(Q', \pi)$.

If $\len(\sigma \pi) = \len(\pi) - 1$, then, $\sigma R$ contains a word for $\pi$ if and only if $R$ contains a word for $\sigma\pi$. So the deletion is isomorphic to $\Delta(Q', \sigma\pi)$.

Thus, the link and deletion of $\sigma$ in $\Delta(Q, \pi)$ are both isomorphic to subword complexes for the shorter word $Q'$, and are therefore vertex-decomposable, and so $\Delta(Q, \pi)$ is vertex-decomposable.
\end{proof}

The exchange condition for Coxeter groups implies the following, which together with Theorems \ref{thm:ball-sphere} and \ref{thm:subword-complex-vertex-decomposable} gives \autoref{thm:subword-complexes-are-balls-or-spheres}.

\begin{lemma}[\protect{\cite[Lemma 3.5]{Subword-complexes-paper}}]\label{lemma:subword-complex-thin}
In the subword complex $\Delta(Q, \pi)$, every codimension $1$ face lies in at most two facets.
\end{lemma}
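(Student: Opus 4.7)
The plan is to translate the counting problem to one about single-letter deletions of a single word and then split on whether that word is reduced. Let $F$ be a codimension $1$ face, set $n = \len(\pi)$, and write $w = w_1 \dotsm w_{n+1}$ for the word obtained by listing the letters of $Q$ whose positions are \emph{not} in $F$. A facet of $\Delta(Q,\pi)$ containing $F$ is obtained from $F$ by adding one vertex, so it corresponds to a single letter $w_i$ of $w$ whose removal leaves a reduced word for $\pi$. Thus the claim reduces to: at most two letters of $w$ can be removed to yield a reduced word for $\pi$.

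First consider the case that $w$ itself is reduced, representing some element $\sigma$ with $\len(\sigma) = n+1$. For each $i$, the standard computation
\[
s_{w_1} \dotsm \widehat{s_{w_i}} \dotsm s_{w_{n+1}} = \sigma \cdot t_i, \qquad t_i \defeq (s_{w_{i+1}} \dotsm s_{w_{n+1}})^{-1} s_{w_i} (s_{w_{i+1}} \dotsm s_{w_{n+1}}),
\]
shows that deleting $w_i$ multiplies $\sigma$ on the right by the reflection $t_i$. These $n+1$ reflections are all distinct (this is the standard description of the inversions of $\sigma$ via any reduced word, a consequence of the strong exchange property). The deleted word has length $n$, so it is reduced iff its product has length $n$, which happens iff $\sigma t_i$ has length $n$, and it represents $\pi$ iff moreover $\sigma t_i = \pi$. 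Because the $t_i$ are distinct, there is at most one such index, so $F$ lies in at most one facet.

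Now suppose $w$ is not reduced. If no single-letter deletion gives a reduced word, then $F$ lies in zero facets and we are done. Otherwise, pick an index $t$ with $w_1 \dotsm \widehat{w_t} \dotsm w_{n+1}$ reduced and apply \autoref{lemma:defect-lemma}: there is exactly one other index $t'$ with $w_1 \dotsm \widehat{w_{t'}} \dotsm w_{n+1}$ reduced. So only two letters of $w$ admit deletions producing any reduced word, let alone a reduced word for $\pi$, and $F$ lies in at most two facets.

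The only step requiring care is the distinctness of the $t_i$ in the reduced case, which is where the argument uses that we are in a Coxeter group (rather than merely in a semigroup generated by involutions); this is standard and cited implicitly when invoking \autoref{lemma:defect-lemma}. Everything else is bookkeeping between subwords of $Q$ and faces of $\Delta(Q,\pi)$.
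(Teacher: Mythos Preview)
Your proof is correct and is essentially the standard argument (the one in \cite{Subword-complexes-paper}, which this paper cites without reproducing): pass to the complementary word $w = Q \setminus F$ of length $\len(\pi)+1$ and use the exchange condition to bound the number of single-letter deletions that yield a reduced word for $\pi$. One small caveat worth flagging: \autoref{lemma:defect-lemma} is stated in this paper in the context of $S_n$, but the statement holds in any Coxeter group as a consequence of the strong exchange condition, which is what your non-reduced case really uses and what \cite[Lemma~3.5]{Subword-complexes-paper} invokes directly.
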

\begin{theorem}[\protect{\cite[Theorem 3.7, Corollary 3.8]{Subword-complexes-paper}}]\label{thm:subword-complexes-are-balls-or-spheres}
  The subword complex $\Delta(Q,\pi)$ is either a ball or a sphere. If $\Dem(Q) = \pi$ it is a sphere. Otherwise, it is a ball and the faces $P$ with $\Dem(Q \setminus P) > \pi$ form the boundary sphere.
\end{theorem}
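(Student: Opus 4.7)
The plan is to combine the three preceding results to obtain the ball-or-sphere dichotomy, and then analyze codimension-$1$ faces via the Demazure product to determine which case occurs and describe the boundary. Since \autoref{thm:subword-complex-vertex-decomposable} gives shellability of $\Delta(Q,\pi)$ and \autoref{lemma:subword-complex-thin} gives thinness, \autoref{thm:ball-sphere} immediately yields that $\Delta(Q,\pi)$ is a ball or a sphere, and that it is a sphere precisely when every codim-$1$ face lies in exactly two facets. The remaining task is to reinterpret this facet-count condition in terms of $\Dem$.

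The key step is to classify codim-$1$ faces. A codim-$1$ face $P$ has $\size{Q \setminus P} = \len(\pi) + 1$ with $Q \setminus P$ containing a reduced word for $\pi$. First case: $Q \setminus P$ is itself reduced, so it represents some $\sigma$ with $\sigma \covers \pi$; the strong exchange property of Coxeter groups then produces a unique letter whose removal gives a reduced word for $\pi$, so $P$ lies in exactly one facet and $\Dem(Q \setminus P) = \sigma > \pi$. Second case: $Q \setminus P$ is not reduced, so its Demazure product has length at most $\len(\pi)$; combined with containment of a reduced word for $\pi$ (which forces $\Dem(Q \setminus P) \ge \pi$), this gives $\Dem(Q \setminus P) = \pi$. \autoref{lemma:defect-lemma} then provides exactly two letters whose removal yields a reduced word, and tracing the Demazure algorithm shows both represent $\pi$, so $P$ lies in exactly two facets.

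This classification says: a codim-$1$ face $P$ lies in two facets iff $\Dem(Q \setminus P) = \pi$, and in one facet iff $\Dem(Q \setminus P) > \pi$. Combined with the monotonicity fact that $Q' \subseteq Q$ implies $\Dem(Q') \le \Dem(Q)$, if $\Dem(Q) = \pi$ then $\pi \le \Dem(Q \setminus P) \le \pi$ for every codim-$1$ face, so the complex is a sphere. If $\Dem(Q) > \pi$, then by picking a reduced subword of $Q$ representing $\Dem(Q)$, a reduced subword for $\pi$ inside it (subword property of Bruhat order), and one extra letter, we exhibit a codim-$1$ face lying in just one facet, so $\Delta(Q,\pi)$ is a ball.

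For the boundary characterization, one direction is easy: if $P \subseteq P'$ where $P'$ is a codim-$1$ boundary face, then $Q \setminus P \supseteq Q \setminus P'$, so $\Dem(Q \setminus P) \ge \Dem(Q \setminus P') > \pi$. The main obstacle, and the other direction, is to take any face $P$ with $\Dem(Q \setminus P) > \pi$ and produce a codim-$1$ face $P' \supseteq P$ still satisfying $\Dem(Q \setminus P') > \pi$. The construction parallels the ball case: inside $Q \setminus P$, pick a reduced subword $R'$ representing $\Dem(Q \setminus P)$, inside $R'$ pick a subword $R$ representing $\pi$, and add any one extra letter from $R' \setminus R$; the resulting length-$(\len(\pi)+1)$ reduced subword represents some $\sigma \covers \pi$, and its complement in $Q$ is the desired $P'$.
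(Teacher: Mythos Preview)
Your proof is correct and considerably more detailed than what the paper provides: the paper merely states that the exchange condition yields \autoref{lemma:subword-complex-thin}, and that this together with \autoref{thm:ball-sphere} and \autoref{thm:subword-complex-vertex-decomposable} gives the result, citing \cite{Subword-complexes-paper} for the full argument. You have essentially reconstructed the original Knutson--Miller proof, including the Demazure-product characterization of boundary faces, which the paper does not spell out.

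One small point to tighten: you invoke \autoref{lemma:defect-lemma} for the ``exactly two facets'' direction in Case~2, but in the paper that lemma is stated in the symmetric-group section and cited from a source about $S_n$, whereas the subword-complex theorem is for arbitrary Coxeter groups. The content you need does hold in general (it is a direct consequence of the strong exchange condition, the same mechanism you correctly use in Case~1), and in fact the ``at most two'' half is already packaged as \autoref{lemma:subword-complex-thin}; you could simply note that the ``at least two'' half follows from strong exchange applied to either defect position, rather than leaning on \autoref{lemma:defect-lemma}. Also, the phrase ``tracing the Demazure algorithm shows both represent $\pi$'' is a bit opaque; the cleaner justification is that each deletion is a reduced word of length $\len(\pi)$ sitting inside $Q\setminus P$, hence represents an element $\le \Dem(Q\setminus P)=\pi$ of the correct length, forcing it to equal $\pi$.
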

Since pipe dreams correspond to subwords of triangular words in $S_n$, when $Q$ is a triangular word, we will refer to the subword complex $\Delta(Q, \pi)$ as a \emph{pipe dream complex} and when we draw it, we will label the facets $P$ with the pipe dream corresponding to $(Q \setminus P)$.

\begin{definition}[\protect{\cite[Definition 20]{Smirnov-Tutubalina}}]
  Let $Q$ be a word, $\pi$ a permutation, and $w$ a word for $\pi$. The \emph{slide complex} $\Delta(Q, w)$ is the subcomplex of $\Delta(Q, \pi)$ whose facets have complements representing $w$.
\end{definition}
The following was proven by Smirnov and Tutubalina with the same proof \cite[Theorem 6]{Smirnov-Tutubalina}. It also follows as a corollary of \autoref{thm:delta-w-ball-or-sphere}.
\begin{lemma}\label{lemma:slide-complex-ball-or-sphere}
  The slide complex $\Delta(Q, w)$ is a subword complex, in particular, it's a ball or a sphere.
\end{lemma}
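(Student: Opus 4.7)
The plan is to exhibit $\Delta(Q, w)$ as a subword complex in the universal Coxeter group $W_n$ (for $n$ at least the largest letter appearing in $Q$), after which \autoref{thm:subword-complexes-are-balls-or-spheres} applies directly.

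First, I would observe that $w$, being a reduced word for $\pi$ in $S_n$, has no two adjacent equal letters (otherwise we could cancel them using $s_i^2=1$ to get a shorter word). Therefore, viewed as a word in $W_n$, $w$ is reduced, so it is the unique reduced word for the element it represents. I will write $w$ also for this element of $W_n$.

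Next, I would unpack the definitions on both sides. A facet of $\Delta(Q, w)$ is, by definition, a complement in $Q$ of a subword of $Q$ that equals $w$ as a sequence of letters. On the other hand, a facet of the subword complex $\Delta(Q, w)$ computed in $W_n$ is the complement of a subword of $Q$ that is a reduced word for the element $w \in W_n$. Since reduced words in $W_n$ are in bijection with elements of $W_n$, a subword of $Q$ is a reduced word for $w \in W_n$ if and only if it equals $w$ as a sequence of letters. Thus the two simplicial complexes have the same facets. Because both are determined by their facets (one by definition, the other as a subword complex whose faces are complements of subwords containing $w$, which by uniqueness of reduced words in $W_n$ is equivalent to containing a subword equal to $w$), the two complexes coincide.

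Finally, I would invoke \autoref{thm:subword-complexes-are-balls-or-spheres} applied in $W_n$: the subword complex $\Delta(Q, w)$ is a ball or a sphere, with the Demazure product in $W_n$ deciding which. The one subtle point, which I do not expect to be a real obstacle, is simply keeping straight that the notation $\Delta(Q, w)$ is overloaded (the slide complex defined via $S_n$ versus the subword complex taken in $W_n$) and explicitly identifying the two, but this is exactly what the argument above accomplishes.
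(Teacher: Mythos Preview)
Your proposal is correct and follows exactly the same approach as the paper: identify $w$ with an element of the universal Coxeter group $W_n$, so that $\Delta(Q,w)$ is literally a subword complex, and then apply \autoref{thm:subword-complexes-are-balls-or-spheres}. Your write-up simply fills in more detail (why $w$ is reduced in $W_n$, why the facets agree) than the paper's one-line version.
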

\begin{proof}
We can identify $w$ with an element in the universal Coxeter group $W_n$, so $\Delta(Q, w)$ is a subword complex and by \autoref{thm:subword-complexes-are-balls-or-spheres}, it is a ball or sphere.
\end{proof}

\subsection{Tableau complexes}
Tableau complexes were introduced by Knutson, Miller, and Yong \cite{Tableau-complexes-paper}. One of their main results is \autoref{thm:tableau-complex-ball-or-sphere} (stated there in greater generality).

The union of two limit set-valued tableaux of shape $\lambda$ is again a limit set-valued tableau of shape $\lambda$, while the intersection might not be, since there might be empty boxes. Like with subword complexes, we therefore get a simplicial complex by taking complements inside some large limit set-valued tableau. More precisely, let $\sT$ be a finite set of tableaux and let $X$ be a set-valued tableau containing all the tableaux in $\sT$.
The tableau complex $\tabcomplexE{\sT}{X}$ is the simplicial complex on $X$ whose faces $F \subset X$ have $T \subset (X \setminus F)$ for some $T \in \sT$.
Thus, the facets of $\tabcomplexE{\sT}{X}$ are the complements of the tableaux in $\sT$, the vertices are individual entries and in general, faces are complements of set-valued tableaux. 
We write $\tabcomplex{\sT}$ for $\tabcomplexE{\sT}{\bigcup_{T \in \sT}T}$. Statements involving $\tabcomplexE{\sT}{X}$ are do not depend on $X \supset \bigcup_{T \in \sT}T$. 
When drawing tableau complexes, we label the face $(X \setminus F)$ with the tableau $F$. See \autoref{fig:tableau-complex-example} for an example.

\begin{figure}
  \centering
  \includestandalone{tableau-complex-SSYT}
  \caption{The tableau complex $\tabcomplex{\SSYT_4((1,1,1))}$.}
  \label{fig:tableau-complex-example}
\end{figure}

\begin{theorem}[\protect{\cite[Theorem 2.8]{Tableau-complexes-paper}}]\label{thm:tableau-complex-ball-or-sphere}
  Let $B$ be a finite partially ordered set. For each $b$ in $B$, let $I_b$ be a subset of $\interval{n}$. Let $\Psi$ be a set of pairs $(b_1, b_2)$ of elements in $B$ such that $b_1 < b_2$. Let $\sT$ be the set of all tableaux $T$ that satisfy:
  \begin{itemize}
  \item for each $b$, $T(b) \in I_b$,
  \item if $(b_1, b_2) \in \Psi$, then $T(b_1) < T(b_2)$.
  \end{itemize}
Then, the tableau complex $\tabcomplexE{\sT}{X}$ is vertex-decomposable, and it is a ball or a sphere.
\end{theorem}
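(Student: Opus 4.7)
The strategy is to parallel the proof of \autoref{thm:subword-complex-vertex-decomposable} and then apply \autoref{thm:ball-sphere}. I would establish vertex-decomposability by induction on $\size{X}$ and verify the thin condition directly from the definitions.

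For the inductive step, let $b \in B$ be a maximal element of the poset, let $i$ be the largest element of $I_b$, and set $v = (b, i) \in X$. The link $\link_v \tabcomplexE{\sT}{X}$ consists of faces $F$ with $v \notin F$ for which some $T \in \sT$ is disjoint from $F \cup \{v\}$, i.e., satisfies $T(b) \neq i$ and $T \cap F = \emptyset$. Hence the link is the tableau complex on $X \setminus \{v\}$ for the same data $(B, \Psi, \{I_{b'}\})$ but with $I_b$ replaced by $I_b \setminus \{i\}$.

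For the deletion, the facets of $\tabcomplexE{\sT}{X}$ are the sets $X \setminus T$ for $T \in \sT$. If $T(b) = i$, then $v \in T$, so $X \setminus T \subseteq X \setminus \{v\}$ remains a facet of $\del_v$. If instead $T(b) = j < i$, then $(X \setminus T) \setminus \{v\}$ is a face of $\del_v$; but since $b$ is maximal in $B$, every $\Psi$-constraint touching $b$ has the form $T(b_1) < T(b)$, and each is preserved when $T(b)$ is raised from $j$ to $i$. Hence the tableau $T'$ obtained from $T$ by redefining $T'(b) = i$ lies in $\sT$, and $(X \setminus T) \setminus \{v\} \subset X \setminus T'$, so this smaller face is not maximal. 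Thus the facets of $\del_v \tabcomplexE{\sT}{X}$ are precisely the $X \setminus T$ with $T(b) = i$, and this identifies $\del_v \tabcomplexE{\sT}{X}$ with a tableau complex on $B \setminus \{b\}$, where each $\Psi$-relation $(b_1, b)$ is absorbed into the restriction $I_{b_1} \cap \interval{i-1}$. Both link and deletion are tableau complexes of the required form on strictly smaller ground sets, so induction yields vertex-decomposability.

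For the thin condition, every codimension-$1$ face of $\tabcomplexE{\sT}{X}$ has the form $X \setminus (T \cup \{(b, j)\})$ for some $T \in \sT$ and some $j \neq T(b)$; any facet containing it corresponds to a $T' \in \sT$ with $T' \subseteq T \cup \{(b, j)\}$, which forces $T'(b) \in \{T(b), j\}$ and $T'(b') = T(b')$ off $b$. There are at most two such $T'$. Combined with shellability (implied by vertex-decomposability, \cite{Vertex-decomposability-paper}), \autoref{thm:ball-sphere} then yields the ball/sphere conclusion. The main obstacle is the analysis of $\del_v$: the argument that facets with $T(b) < i$ do not contribute maximal faces relies on the dual maximality of $b$ in $B$ and of $i$ in $I_b$, which together preserve both the $\Psi$-constraints and the codomain restriction $T(b) \in I_b$.
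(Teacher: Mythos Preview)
The paper does not prove this theorem; it is quoted from \cite[Theorem~2.8]{Tableau-complexes-paper} and used as a black box. Your sketch is essentially the argument given there: pick $v=(b,i)$ with $b$ maximal in $B$ and $i=\max I_b$, identify both $\link_v$ and $\del_v$ as tableau complexes of the same type on strictly smaller $X$, induct, and then conclude ball-or-sphere via the thin condition and \autoref{thm:ball-sphere}. So the approach is correct and matches the source.

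Two small points you should tighten. First, when you raise $T(b)$ from $j$ to $i$ to show that a facet with $T(b)\neq i$ is not maximal in $\del_v$, you check the $\Psi$-constraints but not the order-preserving condition built into the definition of ``tableau'' in this paper; this also holds because $b$ is maximal (there is no $b'>b$ to violate), but it deserves a word, and correspondingly the absorbed constraints on $I_{b_1}$ in your description of $\del_v$ should include $I_{b_1}\cap\interval{i}$ for all $b_1<b$, not only those with $(b_1,b)\in\Psi$. Second, if $|I_b|=1$ then $v$ lies in every $T\in\sT$, hence in no facet, so $v$ is a phantom vertex and $\link_v$ is the void complex, which is not vertex-decomposable under \autoref{def:vertex-decomposable}. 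The fix is routine: drop phantom vertices from $X$ first (the paper notes the complex does not depend on them), which still decreases $|X|$ and keeps the induction going.
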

In \cite{Tableau-complexes-paper}, they used this to prove that the tableau complex $\tabcomplexE{\SSYT_n(\lambda)}{X}$ is a vertex-decomposable ball or sphere. We include that proof and also show that $\tabcomplexE{\CT_n(\lambda}{X}$ and $\tabcomplexE{\WCT_n(\lambda)}{X}$ are balls or spheres. On the other hand, $\tabcomplexE{\SYT_n(\lambda)}{X}$ is generally not a ball or a sphere (see \autoref{ex:syt-not-ball-or-sphere}).

\begin{proposition}\label{thm:semistandard-tableau-complex}\label{thm:composition-tableau-complex}\label{thm:weak-composition-tableau-complex}
  Let $\lambda$ be a partition, composition or weak composition as appropriate. The tableau complexes $\tabcomplexE{\SSYT_n(\lambda)}{X}$, $\tabcomplexE{\CT_n(\lambda)}{X}$ and $\tabcomplexE{\WCT_n(\lambda)}{X}$ are vertex-decomposable balls or spheres.
\end{proposition}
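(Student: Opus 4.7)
The plan is to apply \autoref{thm:tableau-complex-ball-or-sphere} separately to each of the three families. For each one, I need to exhibit a poset $B$, sets $I_b \subseteq \interval{n}$, and a set $\Psi$ of pairs $(b_1, b_2)$ such that the theorem's recipe cuts out exactly the stated family of tableaux. Once that data is in hand, the theorem delivers both vertex-decomposability and the ball-or-sphere conclusion immediately.

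For $\SSYT_n(\lambda)$ I would take $B = \lambda$ with the product order on cells, $I_b = \interval{n}$ for each box $b$, and $\Psi = \setbuilder{((i, j), (i+1, j))}{(i, j), (i+1, j) \in \lambda}$. The paper's convention that a tableau is an order-preserving function already supplies weak increase along rows and columns, so $\Psi$ is only needed in order to upgrade the column inequalities to strict.

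For $\CT_n(\lambda)$ I would take $B = \lambda$ with the lex order and $I_b = \interval{n}$, and let $\Psi$ consist of the pairs $((i, \lambda_i), (i+1, 1))$ for each $i$ such that both row $i$ and row $i+1$ are present in $\lambda$. Lex order preservation supplies weak increase within each row as well as weak comparisons between any two entries in different rows, so the condition ``every entry in row $i$ is strictly less than every entry in row $j$ for $i < j$'' collapses, by transitivity, to the finite list $T(i, \lambda_i) < T(i+1, 1)$, which is exactly what $\Psi$ enforces. For $\WCT_n(\lambda)$ the plan is the same except that $\Psi$ ranges over consecutive non-empty rows of $\lambda$, and the extra rule ``$i$ does not appear above row $i$'' is encoded pointwise by setting $I_{(k, j)} = \{\,1, 2, \ldots, \min(k, n)\,\}$.

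The remaining step in each case is to verify that the $(B, I, \Psi)$ data cuts out exactly the corresponding family of tableaux, which is a routine unpacking of definitions. The only mild obstacle is that the row-separation conditions for CT and WCT look at first glance as though they ought to require many pairs in $\Psi$, one for every box in row $i$ paired with every box in row $j$; but transitivity combined with the weak inequalities already built into lex order preservation collapses this requirement to the minimal $\Psi$ above.
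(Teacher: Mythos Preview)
Your proposal is correct and takes essentially the same approach as the paper: exhibit $(B, I_b, \Psi)$ data and invoke \autoref{thm:tableau-complex-ball-or-sphere}. The only cosmetic difference is that the paper, for $\CT_n(\lambda)$ and $\WCT_n(\lambda)$, takes the redundant choice $\Psi = \setbuilder{((i,j),(i',j'))}{i < i'}$ rather than your minimal set of last-box-to-first-box pairs between consecutive (non-empty) rows; as you correctly argue, transitivity through the lex-order-preserving inequalities makes these two choices define the same $\sT$.
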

\begin{proof}
  In all three cases, $B$ is the Young diagram of shape $\lambda$, with the different orders defined in \autoref{sec:tableaux}. For $\SSYT_n(\lambda)$ and $\CT_n(\lambda)$, $I_b = \interval{n}$ for every $b$, and for $\WCT_n(\lambda)$, $I_{(i,j)} = \setbuilder{k}{k \le i}$. For $\SSYT_n(\lambda)$, $\Psi $ is the set of all pairs $((i,j), (i+1, j))$, so that the entry in a box is smaller than the entry in the box below it. For $\CT_n(\lambda)$ and $\WCT_n(\lambda)$, $\Psi$ is the set of all pairs $((i, j), (i', j'))$ with $i<i'$, so that every entry in row $i$ is smaller than every entry in row $i'$.
Thus, by \autoref{thm:tableau-complex-ball-or-sphere}, they are vertex-decomposable balls or spheres.
\end{proof}

\begin{example}\label{ex:syt-not-ball-or-sphere}
  The tableau complex of standard Young tableaux of shape $\lambda$ is in general not a ball or a sphere. For example, the tableau complex of standard Young tableaux of shape $(2,1)$ is shown in \autoref{fig:standard-young-tableau-not-complex}.
\end{example}

\begin{figure}
  \centering
  \includestandalone{standard-young-tableau-not-complex-correct}
  \caption{The tableau complex $\tabcomplex{\SYT_3((2,1))}$.}
  \label{fig:standard-young-tableau-not-complex}
\end{figure}

Just like for subword complexes, ``ball or sphere'' means ``ball'' except in very special cases. 
For example, the tableau complex $\tabcomplexE{\SSYT_n(\lambda)}{X}$ is a sphere if and only if $X$ equals $\bigcup_{T \in \SSYT_n(\lambda)}T$ and contains exactly one box with more than one entry (necessarily the rightmost box in the top row).
The analogue of the condition on $\Dem(Q)$ in \autoref{thm:subword-complexes-are-balls-or-spheres} is the following.
\begin{proposition}[\protect{\cite[Proposition 2.2]{Tableau-complexes-paper}}]\label{thm:tableau-complex-sphere-condition} 
  Assume that $\tabcomplexE{\sT}{X}$ is homeomorphic to a ball or sphere. A face $(X \setminus F)$ is interior to $\tabcomplexE{\sT}{X}$ if and only if $F$ is a set-valued $\sT$ tableau (as opposed to a limit set-valued $\sT$ tableau).
\end{proposition}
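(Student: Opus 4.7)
The plan is to apply \autoref{thm:ball-sphere}: for a simplicial ball, the boundary is generated by those codimension $1$ faces lying in exactly one facet, so a face $G = X \setminus F$ is interior iff no codimension $1$ face $G' = X \setminus F'$ containing $G$ (equivalently, no $F' \subset F$ with $\abs{F'} = \abs{\lambda}+1$) lies in only one facet. I will translate this combinatorial boundary condition into the tableau-theoretic statement about $F$ and then check both directions.

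The first step is to unpack the codimension $1$ faces. Any set-valued tableau $F' \subset X$ with $\abs{F'} = \abs{\lambda}+1$ has, by pigeonhole, exactly one box $b_0 \in \lambda$ carrying two entries and all others carrying one. Hence $F'$ contains precisely two functions $\lambda \to \interval{n}$, differing only at $b_0$, and the facets through $X \setminus F'$ biject with those of these two functions that lie in $\sT$. Thus $X \setminus F'$ lies in exactly one facet iff $F'$ is a limit set-valued $\sT$ tableau that is not a set-valued $\sT$ tableau. So the proposition reduces to showing: $F$ is set-valued $\sT$ iff no $F' \subset F$ of size $\abs{\lambda}+1$ is a limit-but-not-set-valued $\sT$ tableau.

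The $(\Leftarrow)$ direction is immediate: if $F$ is set-valued $\sT$, every function $f \subset F' \subset F$ lies in $\sT$, so $F'$ is set-valued $\sT$ too. For $(\Rightarrow)$, I would use interpolation. Supposing $F$ is not set-valued $\sT$, pick $f \in \sT$ and $g \notin \sT$ with $f, g \subset F$ (the former exists because $X \setminus F$ is a face), and build a sequence $f = f_0, f_1, \ldots, f_m = g$ of functions $\subset F$ where each $f_i$ is obtained from $f_{i-1}$ by changing the value at one box of $\lambda$ from the $f$-value to the $g$-value; every $f_i$ lies in $F$ because each swapped entry $(b, g(b))$ lies in $g \subset F$. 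Let $i$ be the first index with $f_i \notin \sT$ and let $b_0$ be the box where $f_{i-1}$ and $f_i$ differ. Then $F' := f_{i-1} \cup \{(b_0, f_i(b_0))\}$ has size $\abs{\lambda}+1$, lies in $F$, and contains exactly the two functions $f_{i-1} \in \sT$ and $f_i \notin \sT$, giving the required witness. The hard part is this interpolation argument; it relies on the fact that all entries of $g$ are in $F$, so the whole path from $f$ to $g$ stays inside $F$, and on the discreteness of $\sT$-membership, which guarantees a well-defined first failure step $i$ that cleanly pinpoints a single box $b_0$.
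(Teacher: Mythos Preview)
The paper does not supply its own proof of this proposition; it is quoted from \cite[Proposition 2.2]{Tableau-complexes-paper} without argument, so there is nothing here to compare your approach against. That said, your proof is essentially correct and is in fact the natural argument: reduce via \autoref{thm:ball-sphere} to the codimension~$1$ faces, identify those with set-valued tableaux $F'$ of size $\abs{\lambda}+1$ having exactly one doubly-filled box, and then run the interpolation from $f\in\sT$ to $g\notin\sT$ inside $F$ to locate a witness $F'$ when $F$ fails to be set-valued~$\sT$.

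Two small points. First, your direction labels are swapped: in the biconditional ``$F$ is set-valued $\sT$ iff no $F'\subset F$ of size $\abs{\lambda}+1$ is limit-but-not-set-valued,'' the argument you call $(\Leftarrow)$ (starting from ``if $F$ is set-valued $\sT$'') is really the $(\Rightarrow)$ direction, and vice versa. Second, in the interpolation step you should explicitly restrict to boxes $b$ where $f(b)\neq g(b)$, so that each step genuinely changes the function and the resulting $F'=f_{i-1}\cup\{(b_0,g(b_0))\}$ has size exactly $\abs{\lambda}+1$; you implicitly assume this but it is worth saying. With those cosmetic fixes the argument is complete and covers both the ball and sphere cases (in the sphere case there are no codimension~$1$ faces in a single facet, and your interpolation shows correspondingly that every limit set-valued $\sT$ tableau is already set-valued~$\sT$).
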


\section{Polynomials}
Here we talk about various families of polynomials that are related to the objects we are studying.

\subsection{Symmetric and quasisymmetric polynomials}
\label{sec:fundamental-quasisymmetric}
A polynomial in $n$ variables $x_1$, \ldots, $x_n$ is \emph{quasisymmetric} if it is invariant under swapping $x_i$ and $x_{i+1}$ in all monomials that do not contain both $x_i$ and $x_{i+1}$, or equivalently, if the coefficient of $x_{i_1}^{a_1} \dotsm x_{i_k}^{a_k}$ equals the coefficient of $x_{j_1}^{a_1} \dotsm x_{j_k}^{a_k}$ for all increasing sequences $i_1 < \dotsb < i_k$ and $j_1 < \dotsb < j_k$ and compositions $(a_1, \dotsc, a_k)$. 
It is \emph{symmetric} if it is invariant under swapping $x_i$ and $x_{i+1}$ in all monomials, or equivalently, if the coefficient of $x_{i_1}^{a_1} \dotsm x_{i_k}^{a_k}$ equals the coefficient of $x_{j_1}^{a_1} \dotsm x_{j_k}^{a_k}$ for all sequences $i_1$, \ldots, $i_k$ and $j_1$, \ldots, $j_k$ of distinct numbers and all compositions $(a_1, \dotsc, a_k)$. 

\begin{example}
  A quasisymmetric polynomial in $x_1$, $x_2$ and $x_3$ that contains the term $x_1^2x_2$ must also contain $x_1^2x_3$ and $x_2^2x_3$. A symmetric polynomial containing the term $x_1^2 x_2$ must also contain those two terms, as well as $x_1x_2^2$, $x_1x_3^2$ and $x_2x_3^2$.
\end{example}

The Schur polynomials are symmetric and form a basis for the ring of symmetric polynomials \cite[Proposition 1 in Section 6.1]{Fulton-young-tableaux-book}. Among other applications, they represent the cohomology classes of Schubert varieties in the Grassmannian \cite[Section~9.4]{Fulton-young-tableaux-book}. 
\begin{definition}\label{def:schur-polynomial}
Given a partition $\lambda$, the \emph{Schur polynomial} $\schur_\lambda$ is given by 
\[
\schur_\lambda(x_1, \dotsc, x_n) \defeq \sum_{T \in \SSYT_n(\lambda)}x^T.
\]
\end{definition}

We give our own definition of fundamental quasisymmetric polynomials in terms of composition tableaux. We find it easier to remember than the common definition in \autoref{prop:standard-fundamental-quasisymmetric-definition}, especially because of how similar it is to the definition above.
\begin{definition}\label{def:fundamental-quasisymmetric}
Given a composition $\lambda$, the \emph{fundamental quasisymmetric polynomial} $\quasi_\lambda$ is defined by 
\[
\quasi_\lambda(x_1, \dotsc, x_n) \defeq \sum_{T \in \CT_n(\lambda)} x^T.
\]
\end{definition}

The following proposition shows that \autoref{def:fundamental-quasisymmetric} is equivalent to a standard definition of fundamental quasisymmetric functions \cite[Equation~7.89]{Stanley-2}.

\begin{proposition}\label{prop:standard-fundamental-quasisymmetric-definition}
  Given a composition $\lambda$,
\[
\quasi_\lambda(x_1, \dotsc, x_n) = \sum_{\substack{1\le i_i \le \dotsb \le i_k \le n\\ \text{$i_j < i_{j+1}$ if $j \in S_\lambda$}}}x_{i_1}\dotsm x_{i_k}.
\]
\end{proposition}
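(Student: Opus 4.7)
The plan is to set up an explicit bijection between $\CT_n(\lambda)$ and the index set on the right-hand side, given by reading the entries of a composition tableau in row-reading order, and then observe that the monomial $x^T$ matches under this bijection.

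First I would fix $\lambda = (\lambda_1, \dotsc, \lambda_l)$ with $k = \size{\lambda}$, and for each $T \in \CT_n(\lambda)$ define $(i_1, \dotsc, i_k)$ to be the sequence of entries of $T$ read left-to-right, top-to-bottom. Because $T$ is order preserving with respect to the lexicographic order on boxes defined in \autoref{sec:tableaux}, the entries are weakly increasing within each row, so for positions $j$ that lie strictly inside a single row (i.e.\ $j \notin S_\lambda$) one has $i_j \le i_{j+1}$. When $j \in S_\lambda$, the index $j+1$ lies in a row strictly below the row of $j$, and the defining condition of a composition tableau forces $i_j < i_{j+1}$. Hence the read-off sequence lies in the indexing set of the right-hand sum.

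Next I would show this is a bijection by writing down the inverse: given any sequence $1 \le i_1 \le \dotsb \le i_k \le n$ with strict jumps at positions in $S_\lambda$, fill the boxes of $\lambda$ left-to-right, top-to-bottom with $i_1, \dotsc, i_k$. Within each row, the entries are weakly increasing (so the map is order preserving on each row, and by going row by row it is order preserving on all of $\lambda$ in the lex order). Across rows, the strict jumps at positions in $S_\lambda$ say that the largest entry of row $a$ is strictly smaller than the smallest entry of row $a+1$, and iterating this gives the composition-tableau condition that every entry of row $a$ is strictly smaller than every entry of any row $a' > a$. The two constructions are manifestly inverse.

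Finally, by definition $x^T = x^{\content(T)} = \prod_{j=1}^k x_{i_j}$, since each $i_j$ contributes a factor of $x_{i_j}$ to the content. Substituting this into \autoref{def:fundamental-quasisymmetric} and summing over the bijection yields the claimed formula. The only nontrivial point is verifying that the ``strict across rows'' condition of $\CT_n(\lambda)$ is exactly captured by strict jumps at $S_\lambda$, which is immediate from the definition of $S_\lambda$ as the set of partial sums $\lambda_1, \lambda_1+\lambda_2, \dotsc$; everything else is bookkeeping.
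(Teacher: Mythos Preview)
Your argument is correct and is essentially the same bijection the paper uses: the paper just packages the ``strict across rows $\Leftrightarrow$ strict jump at $S_\lambda$'' step by passing to the content $\kappa$ and invoking \autoref{lemma:composition-tableaux-content} (that $\kappa$ is a content of a composition tableau of shape $\lambda$ iff $\flatt(\kappa)$ refines $\lambda$), whereas you unfold the same correspondence explicitly via the row-reading map. Both describe the identical bijection $T \leftrightarrow (i_1,\dotsc,i_k)$, so there is no substantive difference.
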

\begin{proof}
Given a sequence $i_1 \le \dotsb \le i_k$, let $\kappa$ be the weak composition whose $j$th part is the number of $j$s in the sequence, so that $x_{i_1} \dotsm x_{i_k} = x^\kappa$.
The condition that $i_j < i_{j+1}$ if $j \in \comptoset{\lambda}$ becomes the condition that $\flatt(\kappa)$ refines $\lambda$. By \autoref{lemma:composition-tableaux-content}, this happens if and only if there is a composition tableau of shape $\lambda$ with content $\kappa$.
\end{proof}
\begin{example}
The composition tableaux of shape $(1,2,1)$ with largest entry $4$ are shown in \autoref{fig:composition-tableaux-fundamental-quasisymmetric}, so 
\[
\quasi_{(1,2,1)}(x_1, x_2, x_3, x_4) = x_1x_2^2x_3 + x_1x_2^2x_4 + x_1x_2x_3x_4 + x_1x_3^2x_4 + x_2x_3^2x_4. \qedhere
\]
\end{example}
\begin{figure}
  \centering
  \includestandalone{composition-tableaux-fundamental-quasisymmetric}
  \caption{All composition tableaux of shape $(1,2,1)$ with largest entry $4$.}
  \label{fig:composition-tableaux-fundamental-quasisymmetric}
\end{figure}

Schur polynomials can be written as a sum of fundamental quasisymmetric polynomials \cite[Theorem 7.19.7]{Stanley-2}: 
\[ 
\schur_\lambda = \sum_{T \in \SYT(\lambda)}\quasi_{\comp(\Des(T))}.
\]
Note that the sum is over \textit{standard} Young tableaux, not \textit{semistandard} Young tableaux.

\subsection{Schubert polynomials}
\label{sec:schubert-polynomials}
There are many equivalent definitions of Schubert polynomials. We will mention two related ones that we will use. 
The main one is in terms of pipe dreams.
\begin{definition}[\protect{\cite[Corollary 3.3]{RC-graphs-and-schubert-polynomials}}]
For a permutation $\pi$, the \emph{Schubert polynomial} $\Schub_\pi$ is defined as
\[
\Schub_\pi = \sum_{P \in \PD(\pi)} x^P.
\] 
\end{definition}
Since reduced pipe dreams for $\pi$ correspond to reduced words for $\pi$ together with compatible sequences, we can write the definition in terms of reduced words. 
\begin{proposition}[\protect{\cite[Theorem 2.2]{Fomin-Stanley}, \cite[Theorem 1.1]{Billey-Jockusch-Stanley}}]
\label{prop:schubert-reduced-words} 
For a permutation $\pi$, 
\[
\Schub_\pi = \sum_{\substack{w \in \RW(\pi)\\s \le w}}x^s,
\] 
where $s \le w$ means that $s$ is a compatible sequence for $w$.
\end{proposition}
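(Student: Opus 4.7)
The plan is to exhibit a weight-preserving bijection between $\PD(\pi)$ and the set of pairs $(w,s)$ with $w \in \RW(\pi)$ and $s \le w$; then $x^P = x^s$ under this bijection yields the identity. Indeed, as noted in \autoref{sec:pipe-dreams}, reading off the antidiagonal of each cross right-to-left, top-to-bottom gives a map $P \mapsto w$ with $w \in \RW(\pi)$, and recording the row of each cross alongside gives a candidate compatible sequence $s$. Since $\wt(P)_r$ is the number of crosses in row $r$, which is also the number of $r$'s appearing in $s$, we immediately have $x^P = x^s$.

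The first step is to verify that $s = j_1 \dotsm j_m$ is a compatible sequence for $w = i_1 \dotsm i_m$. A cross at $(r,c)$ lies on antidiagonal $r+c-1 \ge r$, so $j_k \le i_k$. Since the reading proceeds top to bottom, the row indices are weakly increasing: $j_k \le j_{k+1}$. Within a single row, reading right to left, the column decreases and hence the antidiagonal decreases; so if $i_k < i_{k+1}$ the two crosses must lie in different rows, forcing $j_k < j_{k+1}$.

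For the inverse, given $(w,s)$ I would place a cross at $(j_k,\,i_k - j_k + 1)$ for each $k$ and fill the remaining tiles above the antidiagonal with elbows. The compatibility conditions guarantee the cross positions are distinct and occur in the correct right-to-left, top-to-bottom reading order, so the reading word reconstructs $w$; since $w$ is reduced, the resulting pipe dream lies in $\PD(\pi)$. Summing $x^P = x^s$ over this bijection gives the identity.

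The main point to be careful about is that the inverse is well-defined: the condition ``$j_k < j_{k+1}$ whenever $i_k < i_{k+1}$'' corresponds exactly to the geometric constraint that crosses within a single row appear in strictly decreasing antidiagonal order when read right to left. This is immediate once the definitions are unpacked, but it is the key observation that makes the bijection and the constraint on compatible sequences align precisely.
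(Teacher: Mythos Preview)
Your proposal is correct and follows exactly the approach the paper takes. The paper does not give a standalone proof of this proposition; it simply notes, immediately before the statement and back in \autoref{sec:pipe-dreams}, that recording the antidiagonals and rows of the crosses gives a bijection between reduced pipe dreams and pairs $(w,s)$ of a reduced word and a compatible sequence, which is precisely the bijection you spell out in detail.
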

The Schubert polynomials form a basis for the polynomial ring $\bZ[x_1,\dotsc]$ \cite[Corollary 3.9]{RC-graphs-and-schubert-polynomials}, so there are numbers $\ssc{\pi}{\rho}{\sigma}$ called \emph{Schubert structure constants} such that
\[
\Schub_\pi \Schub_\rho = \sum_{\sigma}\ssc\pi\rho\sigma \Schub_\sigma.
\]
It is known from geometric considerations that these numbers are non-negative, and finding a positive formula for these numbers for arbitrary $\pi$, $\rho$ and $\sigma$ is a major open problem in Schubert calculus \cite[Problem 1]{Nenashev}. 

\begin{definition}
  Given a permutation $\pi$, the \emph{Grothendieck polynomial} $\Groth_\pi$ is given by
\[
\Groth_\pi = \sum_{P}(-1)^{\excess P}x^P,
\]
where the sum is over all pipe dreams for $\pi$, not just the reduced ones.
\end{definition}
Grothendieck polynomials are the $K$-polynomials of matrix Schubert varieties \cite[Theorem A]{Grobner-geometry-paper}.

\subsection{Back-stable Schubert polynomials}\label{sec:back-stable-schuberts}
The back-stable Schubert polynomials were introduced by Knutson around 2003 and studied by Lam, Lee and Shimozono \cite{Back-stable-paper} as a generalization of Schubert polynomials to the ring $\bZ[\dotsc, x_{-1}, x_0, x_1, \dotsc]$. 
In \autoref{sec:multiplying-slides}, we give shuffle rules for the products $\Schub_\pi\Schub_{s_i}$ and $\Schub_\pi\Schub_{(i, i+1, \dotsc, i+k)}$.
In fact, the rules are for back-stable Schubert polynomials, because they are more uniform. For example, everything that ``should'' have a compatible sequence has one. 
By \autoref{thm:back-stable-constants-are-regular-constants}, any rule for multiplying back-stable Schubert polynomials gives us a rule for multiplying Schubert polynomials.
The below construction of the back-stable Schubert polynomial follows \cite[Section 3]{Nenashev}.

For $\pi$ in $S_\bZ$, define $\tau\pi$ by $(\tau\pi)(i+1) \defeq \pi(i) + 1$. If $\pi(i) = i$, then $(\tau\pi)(i+1) = i+1$, so since $\pi$ fixes all but finitely many elements of $\bZ$, $\tau\pi$ also fixes all but finitely many elements. If $(\tau\pi)(i) = (\tau\pi)(j)$, then $\pi(i-1) = \pi(j-1)$, so $i = j$. So $\tau\pi$ is a permutation of the integers that fixes all but finitely many elements, and $\tau$ defines a map $S_\bZ \to S_\bZ$, with $\inv \tau \pi(i-1) = \pi(i) - 1$. 
\begin{example}\label{ex:shifting-of-schubert}
If $\pi = [321]$, then $\tau\pi = [1432]$. The pipe dream for $\pi$ and the pipe dreams for $\tau\pi$ are shown in \autoref{fig:tau-pi-pipe-dreams}. The pipe dreams for $\tau\pi$ include the pipe dream for $\pi$, shifted down one row, as well as four new ones (all of which have at least one cross in the top row). So $\Schub_\pi = x_1^2x_2$ and $\Schub_{\tau\pi} = x_2^2x_3 + x_1x_2x_3 + x_1^2x_3 + x_1^2x_2 + x_1x_2^2$. Note that if we plug $x_0$, $x_1$ and $x_2$ into $\Schub_{\tau\pi}$, we get 
\[
\Schub_{\tau\pi}(x_0, x_1, x_2) = \Schub_{\pi}(x_1, x_2) + x_0(x_1x_2 + x_0x_2 + x_0x_1 + x_1^2),
\]
so $\Schub_\pi(x_1, x_2) = \Schub_{\tau\pi}(0, x_1, x_2)$.

The reduced words for $\pi$ are $212$ and $121$, while the reduced words for $\tau\pi$ are $323$ and $232$.
\end{example}
\begin{figure}
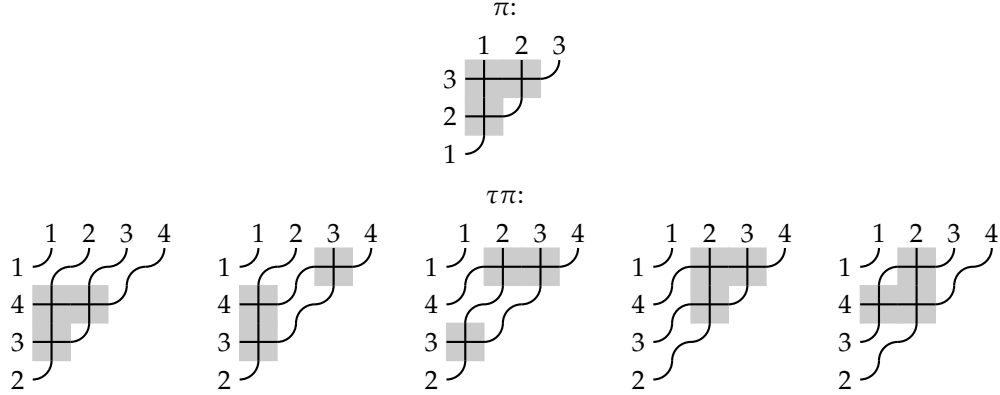
\centering
\includestandalone{pipe-dream-pi-tau-pi}
\caption[Pipe dreams for $\pi = \sqbl321\sqbr$ and $\tau\pi = \sqbl1432\sqbr$.]{The pipe dream for $\pi = [321]$ and the pipe dreams for $\tau\pi = [1432]$. The leftmost pipe dream for $\tau\pi$ is the pipe dream for $\pi$ with all the crosses shifted down one row.}
\label{fig:tau-pi-pipe-dreams}
\end{figure}

\autoref{ex:shifting-of-schubert} motivates the following definition.
\begin{definition}[\protect{\cite[Definition 4]{Nenashev}, \cite[cf. Theorem 3.2]{Back-stable-paper}}]  
\label{def:back-stable-definition} Given a permutation $\pi \in S_\bZ$, the \emph{back-stable Schubert polynomial} $\bSchub_\pi$ is the formal power series
\[
  \bSchub_\pi = \lim_{k \to \infty} \Schub_{\tau^k\pi}(x_{1-k}, x_{2-k}, x_{3-k}, \dotsc).
\]
\end{definition}
A more useful description is the following, which also shows that they are similar to Schubert polynomials (cf. \autoref{prop:schubert-reduced-words}).
\begin{proposition}[\protect{\cite[Theorem 3.2]{Back-stable-paper}}]\label{prop:back-stable-reduced-words}
  For $\pi$ in $S_\bZ$,
\[
\bSchub_\pi = \sum_{\substack{w\in \RW(\pi)\\s\le w}}x^s,
\]
where $s \le w$ if $s$ is a compatible sequence (possibly containing negative numbers) for $w$.
\end{proposition}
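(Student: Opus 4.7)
The plan is to unfold \autoref{def:back-stable-definition} using \autoref{prop:schubert-reduced-words}, tracking how reduced words and compatible sequences transform under the shift operator $\tau$.

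First I would verify that $\tau$ acts on reduced words by simply adding $1$ to every letter. A direct computation $(\tau\pi)(\tau\rho)(i+1) = \pi(\rho(i))+1 = \tau(\pi\rho)(i+1)$ shows that $\tau$ is a group endomorphism of $S_\bZ$, and checking fixed points shows $\tau(s_j) = s_{j+1}$. Hence the map $w = i_1\dotsm i_m \mapsto (i_1+1)\dotsm(i_m+1)$ is a bijection $\RW(\pi) \to \RW(\tau\pi)$, iterating to a bijection $\RW(\pi) \to \RW(\tau^k\pi)$ via adding $k$ to every letter.

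Next I would observe that all three conditions defining a compatible sequence are preserved under simultaneously shifting both the word and the sequence by the same integer. So positive integer compatible sequences $s'$ for $w+k$ correspond, via $s := s' - k$, to compatible sequences $s$ for $w$ whose entries are all $\ge 1-k$. The substitution $x_j \mapsto x_{j-k}$ sends the monomial $x^{s'}$ to $x^s$, so applying \autoref{prop:schubert-reduced-words} to $\tau^k\pi$ and specializing yields
\[
\Schub_{\tau^k\pi}(x_{1-k}, x_{2-k}, \dotsc) \;=\; \sum_{\substack{w \in \RW(\pi) \\ s \le w,\ \min s \ge 1-k}} x^s.
\]

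Finally I would take the limit $k \to \infty$ in \autoref{def:back-stable-definition}. For any fixed pair $(w, s)$ with $w \in \RW(\pi)$ and $s$ a (possibly non-positive) compatible sequence for $w$, the constraint $\min s \ge 1-k$ is automatic once $k > 1 - \min s$, so the coefficient of each monomial $x^s$ stabilizes and the limit in the ring of formal power series gives the claimed formula. The main step needing care is checking that shifting reduced words is genuinely a bijection and not merely a surjection, but this follows from $\tau$ being invertible on $S_\bZ$ with $\inv\tau(s_{j+1}) = s_j$, so subtracting $k$ from every letter provides an inverse to the shift on $\RW(\pi) \to \RW(\tau^k\pi)$.
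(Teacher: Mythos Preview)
Your argument is correct. The paper itself does not prove this proposition; it is stated with a citation to \cite[Theorem 3.2]{Back-stable-paper} and used as a known input, so there is no paper proof to compare against. Your derivation from \autoref{def:back-stable-definition} via \autoref{prop:schubert-reduced-words} is exactly the natural one: identify $\RW(\tau^k\pi)$ with $\RW(\pi)$ by shifting letters, identify positive compatible sequences for the shifted word with compatible sequences bounded below by $1-k$ for the original word, and let $k\to\infty$. One small point worth making explicit is that $\Schub_{\tau^k\pi}$ in \autoref{def:back-stable-definition} is only defined once $\tau^k\pi\in S_\infty$, which holds for all sufficiently large $k$ since $\pi$ moves only finitely many integers; your argument implicitly uses this but never states it.
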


Let $\backR$ be the ring of formal power series in the variables $x_i$ for $i \in \bZ$ with coefficients in $\bZ$ that are:
\begin{itemize}
\item of bounded total degree: for each $f \in \backR$, there is an $M$ such that the degree of every term of $f$ is at most $M$,
\item bounded above: for each $f \in \backR$, there is an $N$ such that no term of $f$ contains $x_i$ for $i > N$,
\item back-symmetric: for each $f \in \backR$, there is a $b$ such that $s_i(f) = f$ for all $i < b$.
\end{itemize}

\begin{proposition}[\protect{\cite[Theorem 3.5 and Remark 2.8]{Back-stable-paper}}]
The ring $\backR$ equals $\bZ[\dotsc, x_{-1}, x_0, x_{1}, \dotsc, c_1, c_2, \dotsc]$, where $c_k = \sum_{i_1 < \dotsb < i_k \le 0} x_{i_1}\dotsm x_{i_k}$. The back-stable Schubert polynomials form a $\bZ$-basis for $\backR$.
\end{proposition}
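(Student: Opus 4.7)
The plan is to prove the ring identity and the basis statement in turn.

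For the ring identity, I would handle the inclusion $\supseteq$ by direct verification: each variable $x_i$ lies in $\backR$ trivially, and $c_k$ is homogeneous of degree $k$, involves only $x_j$ with $j \le 0$, and is symmetric in $\{x_j : j \le 0\}$, so it satisfies all three defining conditions of $\backR$, which are preserved under sums and products. For the inclusion $\subseteq$, I would pick $f \in \backR$ with index bound $N$ (so $f$ uses no $x_i$ with $i > N$) and back-symmetry threshold $b \le 0$ (so $f$ is symmetric in $X_b := \{x_j : j \le b\}$), and then write $f$ as a polynomial in $x_{b+1}, \ldots, x_N$ with coefficients in the ring of bounded-degree symmetric functions in $X_b$. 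Since this coefficient ring is freely generated over $\bZ$ by the elementary symmetric functions $e_k(X_b)$, it remains to express each $e_k(X_b)$ in terms of the $c_k$ and the $x_j$ with $j \le 0$. Using the decomposition $X_0 = X_b \sqcup \{x_{b+1}, \ldots, x_0\}$, the identity
\[
c_k \;=\; \sum_{i=0}^{k} e_i(x_{b+1}, \ldots, x_0)\, e_{k-i}(X_b)
\]
can be solved recursively for $e_k(X_b)$, which completes the argument.

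For the basis statement, I would first observe that the $c_k$ are algebraically independent over $\bZ[x_i : i \in \bZ]$ (any nontrivial relation would, after specializing $x_i = 0$ for $i > 0$, force a relation among the elementary symmetric functions in an infinite alphabet), so the ring identity exhibits $\backR$ as a genuine polynomial ring. Passing from monomials in the $c_k$ to Schur polynomials $\schur_\lambda(X_0)$ (using that $\{\schur_\lambda\}$ is a $\bZ$-basis for the ring of symmetric functions) gives a $\bZ$-basis
\[
\{\,x^\alpha \cdot \schur_\lambda(X_0) \st \alpha : \bZ \to \bN \text{ finitely supported}, \; \lambda \text{ a partition}\,\}
\]
for $\backR$. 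I would then show that the transition matrix from $\{\bSchub_\pi\}_{\pi \in S_\bZ}$ to this basis is unitriangular under a suitable total order on $S_\bZ$. The reduced-word formula \autoref{prop:back-stable-reduced-words} lets me split each compatible sequence $s$ for a word $w \in \RW(\pi)$ into its non-positive part $s_-$ and its positive part $s_+$: the $s_+$-part contributes $x^\alpha$ for some finitely supported $\alpha$, while the $s_-$-part, after back-symmetrization via Edelman--Greene-style insertion, collects into Schur polynomials $\schur_\lambda(X_0)$. This yields an expansion
\[
\bSchub_\pi \;=\; \sum_{\lambda, \rho} a^{\lambda, \rho}_\pi \;\schur_\lambda(X_0) \cdot \Schub_\rho
\]
with $a^{\lambda,\rho}_\pi \in \bN$. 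A ``back-stable Lehmer code'' for $\pi$, splitting $\pi$ into a partition $\lambda(\pi)$ and a permutation $\rho(\pi) \in S_\infty$, then gives a distinguished leading pair that makes this expansion unitriangular.

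The main obstacle is the basis statement, specifically the construction of the back-stable Lehmer code and the verification of unitriangularity. The ring identity is essentially bookkeeping with elementary symmetric functions. The basis statement, by contrast, rests on a back-stable analogue of the Stanley-symmetric-function / Edelman--Greene correspondence, which is the substantive content that comes from \cite{Back-stable-paper}. In practice, one needs to verify that the ``negative half'' of $\bSchub_\pi$ is a genuine Schur-positive symmetric function in $X_0$ (not merely back-symmetric), as this is what isolates the correct $\lambda(\pi)$ and drives the unitriangularity.
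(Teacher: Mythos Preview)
The paper does not give its own proof of this proposition: it is stated with a citation to \cite{Back-stable-paper} and no argument follows. So there is nothing in the paper to compare your proposal against.

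That said, your sketch is a reasonable outline of how the result is established in the cited source. The ring identity portion is straightforward and your argument is correct as written. For the basis statement, your plan---split compatible sequences at the positive/non-positive boundary, recognize the non-positive half as a Stanley-type symmetric function in $X_0$, invoke Edelman--Greene to get Schur positivity, and then argue unitriangularity via a back-stable code---is exactly the strategy of \cite{Back-stable-paper}. You have also correctly flagged the genuine difficulty: one must actually produce the bijection $\pi \leftrightarrow (\lambda(\pi), \rho(\pi))$ between $S_\bZ$ and pairs (partition, element of $S_\infty$) and verify that the coefficient $a_\pi^{\lambda(\pi),\rho(\pi)}$ equals $1$. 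In \cite{Back-stable-paper} this is handled via their coproduct formula and the observation that each $\pi \in S_\bZ$ factors uniquely as a product of a Grassmannian permutation (with descent at $0$) and an element of $S_\infty$ whose lengths add; this factorization is the ``back-stable Lehmer code'' you are reaching for.

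One small point to be careful about in your write-up: in your proposed $\bZ$-basis $\{x^\alpha \cdot \schur_\lambda(X_0)\}$ you allow $\alpha$ to be supported on all of $\bZ$, which is correct since the $x_i$ for $i \le 0$ are genuinely algebraically independent from the $c_k$ inside $\backR$; just make sure your unitriangularity argument respects this (the leading $x^\alpha$ for $\bSchub_\pi$ need not be supported only on positive indices).
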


Since the back-stable Schubert polynomials form a basis for $\backR$, there are \emph{back-stable Schubert structure constants} $\bsc\pi\rho\sigma$ such that 
\begin{equation}\label{eq:back-stable-multiplication}
\bSchub_\pi \bSchub_\rho = \sum_{\sigma}\bsc\pi\rho\sigma \bSchub_\sigma.
\end{equation}
The following propositions tells us that the problem of multiplying the back-stable Schubert polynomials and expanding them in the basis of back-stable Schubert polynomials is exactly the same as the corresponding problem for Schubert polynomials.
\begin{proposition}[\protect{\cite[Proposition 3.20]{Back-stable-paper}}]\label{thm:back-stable-constants-are-regular-constants}
For $\pi$, $\rho$ and $\sigma$ in $S_\bZ$ and $k \in \bZ$, $\bsc\pi\rho\sigma = \bsc{\tau^k\pi}{\tau^k\rho}{\tau^k\sigma}$.
If $\pi$, $\rho$ and $\sigma$ lie in $S_\infty$, then $\bsc\pi\rho\sigma = \ssc\pi\rho\sigma$. If $k$ is large enough, so that $\tau^k\pi$, $\tau^k\rho$ and $\tau^k\sigma$ all lie in $S_\infty$, then $\bsc\pi\rho\sigma = \ssc{\tau^k\pi}{\tau^k\rho}{\tau^k\sigma}$.
\end{proposition}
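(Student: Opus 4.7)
The plan is to establish three claims in sequence: $\tau$-equivariance of the back-stable structure constants, their agreement with the ordinary Schubert structure constants when $\pi, \rho, \sigma \in S_\infty$, and reduction of the general $S_\bZ$ case to these. The two main tools are a variable-shift ring automorphism $\Phi$ of $\backR$ realizing $\tau$, and a ``restrict to positive indices'' ring homomorphism $\epsilon \fc \backR \to \bZ[x_1, x_2, \dotsc]$.

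For $\tau$-equivariance, I would first verify that $x_i \mapsto x_{i+1}$ extends to a ring automorphism $\Phi$ of $\backR$: bounded total degree and the bounded-above condition are clearly preserved, and the back-symmetry threshold just shifts by one, since $\Phi \circ s_i = s_{i+1} \circ \Phi$ on the ambient ring of power series. Using \autoref{prop:back-stable-reduced-words}, the bijection $w \mapsto w+1$ from $\RW(\pi)$ to $\RW(\tau\pi)$ (add $1$ to each letter) together with the corresponding bijection $s \mapsto s+1$ on compatible sequences gives $\Phi(\bSchub_\pi) = \bSchub_{\tau\pi}$. Applying $\Phi$ to the defining expansion of $\bSchub_\pi\bSchub_\rho$ and relabeling the summation variable by $\tau\sigma$ then yields $\bsc{\tau\pi}{\tau\rho}{\tau\sigma} = \bsc\pi\rho\sigma$; iterating handles general $k$.

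For the second claim, define $\epsilon$ by $x_i \mapsto 0$ for $i \le 0$ and $x_i \mapsto x_i$ for $i \ge 1$; well-definedness on $\backR$ follows from bounded total degree. By \autoref{prop:back-stable-reduced-words} and \autoref{prop:schubert-reduced-words}, $\epsilon(\bSchub_\pi) = \Schub_\pi$ for $\pi \in S_\infty$, since the surviving compatible sequences are exactly those with all positive entries. For $\sigma \notin S_\infty$, any reduced word $w_1 \dotsm w_m$ for $\sigma$ must contain a non-positive letter $w_j \le 0$ (otherwise the word would represent an element of $S_\infty$), so any compatible sequence $s_1 \le \dotsb \le s_m$ satisfies $s_1 \le s_j \le w_j \le 0$, killing $\epsilon(x^s)$; hence $\epsilon(\bSchub_\sigma) = 0$. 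Applying $\epsilon$ to the back-stable product expansion for $\pi, \rho \in S_\infty$ then gives $\Schub_\pi\Schub_\rho = \sum_{\sigma \in S_\infty}\bsc\pi\rho\sigma\Schub_\sigma$, and comparison with the standard Schubert product expansion, together with linear independence of $\{\Schub_\sigma\}_{\sigma \in S_\infty}$, yields $\bsc\pi\rho\sigma = \ssc\pi\rho\sigma$.

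For the third claim, given arbitrary $\pi, \rho, \sigma \in S_\bZ$, each permutation moves only finitely many integers, so for sufficiently large $k$ the shifted permutations $\tau^k\pi, \tau^k\rho, \tau^k\sigma$ all lie in $S_\infty$; the first two claims then combine to give $\bsc\pi\rho\sigma = \bsc{\tau^k\pi}{\tau^k\rho}{\tau^k\sigma} = \ssc{\tau^k\pi}{\tau^k\rho}{\tau^k\sigma}$. The main technical obstacle will be carefully verifying well-definedness of $\Phi$ and especially $\epsilon$ on the power-series ring $\backR$, together with the supporting claim that every reduced word for a permutation outside $S_\infty$ must contain a non-positive letter.
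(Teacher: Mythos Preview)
The paper does not supply its own proof of this proposition; it is stated with a citation to \cite[Proposition 3.20]{Back-stable-paper} and then used without further argument. Your argument is correct and is essentially the standard one: the shift automorphism $\Phi$ of $\backR$ realizes $\tau$-equivariance of the back-stable structure constants via \autoref{prop:back-stable-reduced-words}, and the evaluation homomorphism $\epsilon$ setting $x_i = 0$ for $i \le 0$ recovers the ordinary Schubert expansion for permutations in $S_\infty$, with the key observation that any reduced word for $\sigma \notin S_\infty$ must contain a non-positive letter so that $\epsilon(\bSchub_\sigma) = 0$.
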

So a combinatorial rule for multiplying Schubert polynomials gives us a combinatorial rule for multiplying back-stable Schubert polynomials and vice versa.

The main reason we are considering back-stable Schubert polynomials instead of the Schubert polynomials is the following theorem.
\begin{theorem}[\protect{\cite[Proposition 3]{Nenashev}}]\label{thm:Nenashev-shuffling}
  Given permutations $\pi$ and $\rho$ in $S_\bZ$,
\[
\binom{\len(\pi) + \len(\rho)}{\len(\rho)}\size{\RW(\pi)}\size{\RW(\rho)} = \sum_{\sigma \in S_\bZ} \bsc\pi\rho\sigma\size{\RW(\sigma)}.
\]
\end{theorem}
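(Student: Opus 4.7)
The plan is to apply a coefficient-extraction functional to the multiplication identity \eqref{eq:back-stable-multiplication}. Set $L = \len(\pi) + \len(\rho)$; since every $\sigma$ with $\bsc{\pi}{\rho}{\sigma} \neq 0$ must have $\len(\sigma) = L$, both sides of \eqref{eq:back-stable-multiplication} are homogeneous of degree $L$. For a large integer $N$ (how large will be clear below), let $\phi$ be the linear functional on $\backR$ that reads off the coefficient of the squarefree monomial $x_{-N+1} x_{-N+2} \cdots x_{-N+L}$.

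First I would compute $\phi(\bSchub_\sigma)$ for $\len(\sigma) = L$. By \autoref{prop:back-stable-reduced-words}, each monomial of $\bSchub_\sigma$ has the form $x^s$, where $s$ is a weakly increasing compatible sequence for some $w \in \RW(\sigma)$. Matching the squarefree target forces $s$ to be strictly increasing and equal to the tuple $(-N+1, -N+2, \dotsc, -N+L)$. The conditional strict-increase clause in the compatible-sequence definition then holds for free, and the bound $s_k \le w_k$ holds whenever $N$ is large enough, uniformly over $w$. Thus every $w \in \RW(\sigma)$ contributes exactly one compatible sequence, giving $\phi(\bSchub_\sigma) = \size{\RW(\sigma)}$.

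Next I would compute $\phi(\bSchub_\pi \bSchub_\rho)$. Expanding both factors via \autoref{prop:back-stable-reduced-words} and multiplying out, a term $x^s x^{s'}$ hits our target precisely when the multiset of entries of $s$, together with those of $s'$, equals $\{-N+1, \dotsc, -N+L\}$ without repetition. As before, this forces both $s$ and $s'$ to be strictly increasing, and determines them from the two complementary subsets they enumerate. Hence contributions biject with a choice of an $\len(\rho)$-subset of $\{-N+1, \dotsc, -N+L\}$ together with an arbitrary pair $(w, w') \in \RW(\pi) \times \RW(\rho)$, yielding $\phi(\bSchub_\pi \bSchub_\rho) = \binom{L}{\len(\rho)} \size{\RW(\pi)} \size{\RW(\rho)}$.

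Applying $\phi$ to both sides of \eqref{eq:back-stable-multiplication} and combining the two computations finishes the proof. The only mildly subtle point is the uniform choice of $N$: since only finitely many $\sigma$ have $\bsc{\pi}{\rho}{\sigma} \neq 0$ and each reduced word imposes only a constant lower bound on $N$, a single large enough $N$ makes every contribution legitimate. Conceptually, the identity just says that the coefficient $\binom{L}{\len(\rho)}\size{\RW(\pi)}\size{\RW(\rho)}$ enumerating shuffles of reduced words of $\pi$ and $\rho$ matches the weighted count of reduced words of products $\sigma$ appearing in $\Schub_\pi \Schub_\rho$ — and this shuffle interpretation is precisely why \autoref{prop:back-stable-reduced-words}'s strictly-increasing-on-decreasing-regions rule makes the bookkeeping work out.
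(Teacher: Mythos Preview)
Your argument is correct. The paper itself does not supply a proof of this theorem; it is quoted from \cite{Nenashev} with attribution and followed only by an informal discussion of what the two sides count. So there is no ``paper's own proof'' to compare against.

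That said, your coefficient-extraction approach is essentially the standard one (and, as far as I know, the one Nenashev uses): pick the coefficient of a squarefree monomial far enough to the left so that, by \autoref{prop:back-stable-reduced-words}, the unique contributing compatible sequence for each reduced word of length $L$ is the strictly increasing one $(-N{+}1,\dotsc,-N{+}L)$. Your check that this yields $\phi(\bSchub_\sigma)=\size{\RW(\sigma)}$ and $\phi(\bSchub_\pi\bSchub_\rho)=\binom{L}{\len(\rho)}\size{\RW(\pi)}\size{\RW(\rho)}$ is accurate, and the finiteness of the set of $\sigma$ with $\bsc{\pi}{\rho}{\sigma}\neq 0$ (guaranteed by the basis statement preceding \eqref{eq:back-stable-multiplication}) justifies the uniform choice of $N$. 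Nothing is missing.
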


The left hand side counts the number of shuffles of a word for $\pi$ and a word for $\rho$ (keeping track of which word each letter came from). 
The right hand side counts the number of words for the permutations $\sigma$ (with multiplicity) appearing on the right hand side of \autoref{eq:back-stable-multiplication}. 
The equality implies that there should be a rule for taking a shuffle of a word for $\pi$ and a word for $\rho$ and producing a word $S$ for $\sigma$ in the product. Further, the fiber over $S$ should have size $\bsc\pi\rho\sigma$, independent of $S$, similar to the jeu-de-taquin rule for computing Littlewood--Richardson coefficients.

\begin{example}\label{ex:shuffling}
  Let $\pi = [21]$ and $\rho = [321]$, with $\RW([21]) = \{1\}$ and $\RW([321]) = \{121, 212\}$. So there are $8$ shuffles of a word for $[21]$ and a word for $[321]$: $\leftword{1}121$, $1\leftword{1}21$, $12\leftword{1}1$, $121\leftword{1}$, $\leftword{1}212$, $2\leftword{1}12$, $21\leftword{1}2$ and $212\leftword{1}$, where $\leftword{1}$ is the $1$ from the reduced word for $[21]$.

The product of the back-stable Schubert polynomials for $[21]$ and $[321]$ is
\[
\bSchub_{[21]} \bSchub_{[321]} = \bSchub_{[4213]} + \bSchub_{\sigma_1} + \bSchub_{\sigma_2},
\]
where $\sigma_1 = \inv\tau[3412]
$
and $
\sigma_2 = \inv\tau[2431]$. 
Now $\RW([4213]) = \{1321, 3121, 3212\}$,  $\RW(\sigma_1) = \{1021, 1201\}$ and $\RW(\sigma_2) = \{0121, 0212, 2012\}$. So there are $8$ reduced words for the permutations that appear in the product. 
In \autoref{ex:monks-formula} we will transform the shuffles into the reduced words.
\end{example}

\subsection{Slide and glide polynomials}\label{sec:slide}
Assaf and Searles \cite{Slide-paper} introduced the fundamental slide polynomials and expanded Schubert polynomials in them, similar to the expansion of Schur polynomials into a sum of fundamental quasisymmetric polynomials. 
They also introduced monomial slide polynomials, which we will not consider, and we will therefore refer to the fundamental slide polynomials as simply slide polynomials.

Just like with the fundamental quasisymmetric polynomials, we give a definition in terms of tableaux.
In \autoref{prop:assaf-searles-slide-definition}, we show that it is equivalent to the original definition.
\begin{definition}\label{def:slide-new}
  Given a weak composition $\lambda$, the \emph{slide polynomial} $\slide_\lambda$ is defined by
\[
\slide_\lambda(x_1, \dotsc, x_n) \defeq \sum_{T \in \WCT(\lambda)} x^T.
\]
\end{definition}

\begin{example}
The weak composition tableaux of shape $(3,0,2,2)$ are shown in \autoref{fig:weak-composition-tableaux-slide}, so 
\[
\slide_{(3,0,2,2)}(x_1, x_2, x_3, x_4) = x_1^3x_2^2x_3^2 + x_1^3x_2^2x_3x_4 + x_1^3x_2^2x_4^2 + x_1^3x_2x_3x_4^2 + x_1^3x_3^2x_4^2. \qedhere
\]
\end{example}
\begin{figure}
  \centering
  \includestandalone{weak-composition-tableaux-slide}
  \caption{All weak composition tableaux of shape $(3,0,2,2)$.}
  \label{fig:weak-composition-tableaux-slide}
\end{figure}

\begin{proposition}[\protect{\cite[Definition 3.6]{Slide-paper}}]\label{prop:assaf-searles-slide-definition}
  Given a weak composition $c$, the slide polynomial $\slide_c$ is given by
\[
\slide_c(x_1, \dotsc, x_n) = \sum_{\substack{d \\ \text{$d$ dominates $c$, and}\\\text{$\flatt(d)$ refines $\flatt(c)$}}}x^d. 
\]
\end{proposition}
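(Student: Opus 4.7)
The plan is to reduce the identity to \autoref{lemma:weak-composition-tableaux-content} together with the uniqueness of weak composition tableaux for a given content. Starting from \autoref{def:slide-new}, we have
\[
\slide_c(x_1, \dotsc, x_n) = \sum_{T \in \WCT(c)} x^T = \sum_{T \in \WCT(c)} x^{\content(T)},
\]
so it suffices to show that the map $T \mapsto \content(T)$ is a bijection between $\WCT(c)$ and the set of weak compositions $d$ such that $d$ dominates $c$ and $\flatt(d)$ refines $\flatt(c)$.

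First, I would argue that the map is injective. A weak composition tableau has strictly increasing rows (from the requirement that entries in a lower row are strictly bigger than entries in a higher row), and as noted in the text just before \autoref{lemma:composition-tableaux-content}, the weakly-increasing reading order means that a weak composition tableau is uniquely determined by its multiset of entries, i.e.\ by its content. Hence different $T$ have different contents.

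Second, I would invoke \autoref{lemma:weak-composition-tableaux-content} directly: the image of $\content$ is exactly the set of weak compositions $d$ with $\flatt(d)$ refining $\flatt(c)$ and $d$ dominating $c$. Combining injectivity with this characterization of the image, the two sums agree term-by-term, which gives the claim. The only substantive input is \autoref{lemma:weak-composition-tableaux-content}, which has already been stated; the main (very small) obstacle is simply confirming the uniqueness argument, and this is immediate from the row-strict, left-to-right increasing nature of weak composition tableaux.
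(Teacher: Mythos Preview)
Your proof is correct and matches the paper's approach: the paper's proof is simply the single line ``This is \autoref{lemma:weak-composition-tableaux-content},'' relying on the same uniqueness observation you invoke (that weak composition tableaux are determined by their content). Your write-up just unpacks this in more detail; the only quibble is that rows of a weak composition tableau are \emph{weakly} increasing, not strictly, but this does not affect the uniqueness argument you cite.
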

\begin{proof}
  This is \autoref{lemma:weak-composition-tableaux-content}.
\end{proof}
Searles \cite[Proposition 2.9]{Skyline-slides-paper} notes that they can be defined in terms of certain fundamental semi-skyline fillings, where the conditions defining semi-skyline fillings turn out to be automatic, and which turn out to be the same as our weak composition tableau (possibly rotated or reflected).

From our \autoref{def:slide-new}, we see easily that the fundamental quasisymmetric polynomial $\quasi_\lambda(x_1, \dotsc, x_n)$ for a composition $\lambda$ equals the slide polynomial $\slide_{(0^n,\lambda)}(x_1, \dotsc, x_n)$, where $(0^n,\lambda)$ is the weak composition which starts with $n$ zeros, followed by $\lambda$, since in $\WCT_n(0^n, \lambda)$, the restrictions on which entries can be in which row hold automatically. 

However, slide polynomials are in general not quasisymmetric. For example, $\slide_{(1,0,1)} = x_1x_2 + x_1x_3$ is not quasisymmetric since it does not include a $x_2x_3$-term.  

\begin{proposition}[\protect{\cite[Theorem 3.13]{Slide-paper}}]
\label{prop:schubert-sum-of-slides}
  For a permutation $\pi$ in $S_\infty$,
\[
\Schub_\pi = \sum_{P \in \QPD(\pi)}\slide_{\wt(P)},
\]
where $\QPD(\pi)$ is the set of \qY/ reduced pipe dreams for $\pi$.
\end{proposition}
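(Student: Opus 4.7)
The plan is to start from the pipe-dream definition $\Schub_\pi = \sum_{P' \in \PD(\pi)} x^{P'}$ and regroup the sum by reading word. By \autoref{lemma:quasi-yamanouchi-to-word}, sending a \qY/ reduced pipe dream to its reading word is a bijection between $\QPD(\pi)$ and the set of reduced words for $\pi$ (each of which, for $\pi \in S_\infty$, appears as a subword of a sufficiently large triangular word). Thus, after grouping, the identity reduces to showing, for each $P \in \QPD(\pi)$ with reading word $w$ and weight $c := \wt(P)$,
\[
\sum_{\substack{P' \in \PD(\pi) \\ \text{reading word of }P' = w}} x^{P'} \;=\; \slide_c.
\]

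Using the bijection between pipe dreams and pairs (word, compatible sequence), the left-hand side equals $\sum_{s} x^{\wt(s)}$ as $s$ ranges over compatible sequences for $w$; since such an $s$ is weakly increasing, it is determined by $\wt(s)$, so each weight appears with coefficient $1$. By \autoref{prop:assaf-searles-slide-definition}, the right-hand side is $\sum_d x^d$ over weak compositions $d$ with $d \ge c$ in dominance and $\flatt(d)$ refining $\flatt(c)$. So the proof reduces to the set identity
\[
\{\wt(s) : s \text{ is a compatible sequence for } w\} = \{d : d \ge c \text{ and } \flatt(d) \text{ refines } \flatt(c)\}.
\]

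For the forward inclusion, I would use the fact that $P$ is the pipe dream with reading word $w$ whose crosses are pushed as far down as possible: if the leftmost cross of some row $i$ were neither in column $1$ nor weakly left of a cross in row $i+1$, it could be slid along its antidiagonal into row $i+1$ without changing the reading word, contradicting the \qY/ condition. This extremality yields $d \ge c$ for any other compatible sequence (moving crosses upward only increases partial sums of the weight). The refinement condition follows from the compatibility constraint $j_k < j_{k+1}$ whenever $i_k < i_{k+1}$: each ascent of $w$ forces a block boundary in any weakly increasing compatible sequence $s$, so the block structure of $s$ must refine that of the minimal compatible sequence corresponding to $P$.

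For the reverse inclusion, given $d$ satisfying the two properties, I would write down the unique weakly increasing sequence $s$ of length $|w|$ with weight $d$ and check that it is a compatible sequence for $w$. The main obstacle will be the bookkeeping here: showing that dominance of $d$ over $c$ is exactly what ensures each $j_k \le i_k$ (starting from the fact that the minimal sequence already satisfies these inequalities and using that dominance only moves values into larger-index rows), and that refinement of $\flatt(d)$ over $\flatt(c)$ is exactly what guarantees the strict inequalities $j_k < j_{k+1}$ hold at every ascent of $w$.
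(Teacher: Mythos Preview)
The paper does not prove this proposition; it is quoted as \cite[Theorem~3.13]{Slide-paper} and used as input, so there is no in-paper argument to compare against.

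Your outline is correct and is essentially the argument in the cited source. Two small points are worth tightening. First, \autoref{lemma:quasi-yamanouchi-to-word} gives a bijection between $\QPD(\pi)$ and those reduced words for $\pi$ that embed in the triangular word (equivalently, that admit at least one compatible sequence), not all reduced words; this is harmless since a word with no compatible sequence contributes zero to both sides. Second, in your sliding argument you should say why the target square $(i+1,j-1)$ is empty: if it were occupied, the leftmost cross of row $i$ and the rightmost cross of row $i+1$ would sit on the same antidiagonal and be read consecutively, giving two equal adjacent letters in the reading word and contradicting reducedness. With that observation, iterating the move shows the \qY/ pipe dream is the unique terminus of such pushes, so its compatible sequence $j^*$ is componentwise maximal, whence $d\ge c$ in dominance.

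For the reverse inclusion your plan is right, and the bookkeeping is short once organized as follows. Dominance $d\ge c$ says $\#\{k:s_k\le r\}\ge\#\{k:j^*_k\le r\}$ for all $r$; for weakly increasing sequences this is equivalent to $s_k\le j^*_k$, and hence $s_k\le i_k$. The \qY/ sequence has strict increases exactly at the ascents of $w$ (at a non-ascent $i_k\ge i_{k+1}$ one could otherwise raise $j^*_k$ to $j^*_{k+1}\le i_{k+1}\le i_k$, contradicting maximality), so $\flatt(c)$ is the ascent composition of $w$; refinement of $\flatt(d)$ then forces a strict increase of $s$ at every ascent. Thus $s$ is a compatible sequence for $w$ with weight $d$.
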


The slide polynomials form a basis for the ring of polynomials \cite[Theorem 3.9]{Slide-paper} and there is a positive rule \cite[Theorem 5.11]{Slide-paper} for expanding the product of two slide polynomials in the basis of slides. 
However, there is no known positive rule for reassembling the resulting slide polynomials into Schubert polynomials. If there were such a rule, it would lead to a positive rule for expanding the product of two Schubert polynomials as a sum of Schubert polynomials.

By \autoref{lemma:quasi-yamanouchi-to-word}, \qY/ reduced pipe dreams correspond to reduced words that have compatible sequences. So for $w$ a reduced word for $\pi$, we define $\slide_w \defeq \slide_{\wt (P)}$ if there is a \qY/ pipe dream $P$ with reading word $w$ and $0$ otherwise. Then,
\begin{equation}\label{eq:schubert-slide-expansion}
\Schub_\pi = \sum_{w \in \RW(\pi)}\slide_w.
\end{equation}

\label{sec:slides-for-words}
Since there are more reduced words than weak compositions, this means that the slide polynomials for two different words can be the same polynomial.

\begin{example}
  The reduced words $21$ and $31$ both have the same slide polynomial: $\slide_{21} = \slide_{31} = \slide_{(2)} = x_1^2$. 
\end{example}
The decomposition of a Schubert polynomial as a sum of slide polynomials is not multiplicity free, so there are permutations $\pi$ and reduced words $w$ and $w'$ for $\pi$ such that $\slide_w = \slide_{w'}$.
\begin{example}
  Let $\pi = [246135]$ and $w = 315243$ and $w' = 513243$, then $\slide_w = \slide_{w'} = \slide_{(2,2,2)} = x_1^2x_2^2x_3^2$. The \qY/ pipe dreams for these two words are shown in \autoref{fig:pipe-dream-picture} in \autoref{sec:pipe-dreams}.
\end{example}
\begin{theorem}[Restatement of \autoref{lemma:slide-complex-ball-or-sphere}]\label{thm:schubert-slide-ball-decomposition}
The pipe dream complex $\Delta(Q, \pi)$ can be decomposed into balls or spheres, with the balls or spheres corresponding to the slide polynomials appearing in the expansion
\[
\Schub_\pi = \sum_{w \in \RW(\pi)} \slide_w.
\]
\end{theorem}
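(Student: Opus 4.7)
The plan is to unpack \autoref{thm:schubert-slide-ball-decomposition} as a bookkeeping consequence of \autoref{lemma:slide-complex-ball-or-sphere} together with the polynomial identity in \autoref{eq:schubert-slide-expansion}. First I would specialize $Q$ to the triangular word, so that facets of $\Delta(Q,\pi)$ are in bijection with reduced pipe dreams for $\pi$, and each facet $(Q\setminus P)$ carries a well-defined reading word $w\in\RW(\pi)$.

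Next, I would partition the facets of $\Delta(Q,\pi)$ according to this reading word. For each $w\in\RW(\pi)$, the subcomplex generated by the facets with reading word $w$ is by definition the slide complex $\Delta(Q,w)$, and every facet of $\Delta(Q,\pi)$ belongs to exactly one such subcomplex. Invoking \autoref{lemma:slide-complex-ball-or-sphere} (or equivalently, identifying $w$ with the corresponding element of the universal Coxeter group $W_n$ and applying \autoref{thm:subword-complexes-are-balls-or-spheres}), each $\Delta(Q,w)$ is a ball or a sphere.

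To match this simplicial decomposition with the polynomial expansion, I would verify that
\[
\sum_{\text{facets } (Q\setminus P) \text{ of } \Delta(Q,w)} x^P = \slide_w.
\]
The pipe dreams with a fixed reading word $w$ are in bijection with compatible sequences for $w$, and the weight of such a pipe dream is the weak composition recording the multiplicities of the compatible sequence. By \autoref{lemma:quasi-yamanouchi-to-word} there is a unique \qY/ pipe dream $P_0$ with reading word $w$, and by \autoref{lemma:weak-composition-tableaux-content} the remaining pipe dreams with reading word $w$ are exactly those whose weight weakly dominates $\wt(P_0)$ and whose flattening refines $\flatt(\wt(P_0))$. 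These are in bijection with $\WCT(\wt(P_0))$, so summing $x$-weights yields $\slide_{\wt(P_0)}=\slide_w$ by \autoref{def:slide-new}.

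There is no substantial obstacle, since the claim is a restatement: the only thing to check beyond \autoref{lemma:slide-complex-ball-or-sphere} is that the facet-partition of $\Delta(Q,\pi)$ by reading word aligns with the term-by-term expansion in \autoref{eq:schubert-slide-expansion}, and this follows from the standard bijection between pipe dreams and (word, compatible sequence) pairs together with the tableau-based definition of $\slide_w$.
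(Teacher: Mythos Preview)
Your proposal is correct and follows the same route as the paper: the paper states this theorem explicitly as a restatement of \autoref{lemma:slide-complex-ball-or-sphere} and gives no separate proof, so the argument is precisely the partition of facets of $\Delta(Q,\pi)$ by reading word together with the identification of each piece as a slide complex (a subword complex in $W_n$, hence a ball or sphere). Your additional paragraph matching the facet-sum over $\Delta(Q,w)$ with $\slide_w$ via compatible sequences and the \qY/ representative is extra bookkeeping the paper leaves implicit in \autoref{prop:schubert-sum-of-slides} and the definition $\slide_w \defeq \slide_{\wt(P_0)}$, but it is correct and harmless.
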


\begin{figure}
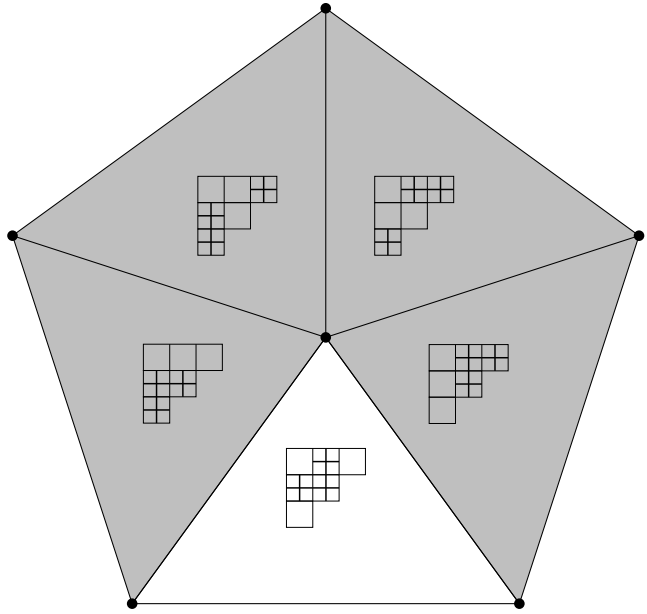
\centering
\includestandalone{subword-complex-1432-pd}
\caption[A pipe dream complex with slide complexes marked.]{The pipe dream complex for $\pi = [1432]$, with the slide complex for $323$ in gray and the slide complex for $232$ at the bottom in white.}
\label{fig:subword-complex-1432}
\end{figure}

\begin{example}\label{ex:different-simplicial-complexes}
The decomposition of the pipe dream complex for $[1432]$ and its decomposition into the slide complexes for $323$ and $232$ is shown in \autoref{fig:subword-complex-1432}.

Combining \autoref{thm:weak-composition-tableau-complex} and \autoref{def:slide-new}, we get that there is a tableau complex corresponding to each slide polynomial. However, in general, they do not fit into the pipe dream complex. The tableau complex corresponding to that slide, $\tabcomplex{\WCT_3((0,2,1)}$, shown in \autoref{fig:tableau-complex-does-not-fit}, does not fit into the pipe dream complex $[1432]$. 
This is due to the fact that the facets $\dd\dd1\dd23$ and $321\dd\dd\dd$ share a vertex in the slide complex $323$, while the corresponding facets, which correspond to the tableaux with content $(2,1,0)$ and $(0,2,1)$, do not.

This difference in adjacency between subword complexes and tableau complexes has also been noted in the case of Grassmannian permutations \cite[Remark 2.11]{Tableau-complexes-paper}.
\end{example}

\begin{figure}\centering
\includestandalone{tableau-complex-slides}
\caption{The tableau complex $\tabcomplex{\WCT_3((0,2,1))}$.}
\label{fig:tableau-complex-does-not-fit}
\end{figure}
Glide polynomials were defined by Pechenik and Searles \cite{Glide-paper} and are a generalization of slide polynomials similar to the generalization from Schubert polynomials to Grothendieck polynomials.

We again give our own definition in terms of tableaux, and prove that it is equivalent to the definition in \cite{Glide-paper}.
\begin{definition}\label{def:glide-polynomial}
Given a weak composition $\lambda$, the \emph{glide polynomial} $\glide_\lambda$ is defined by
\[
\glide_\lambda(x_1, \dotsc, x_n) \defeq \sum_T (-1)^{\size{T} - \size{\lambda}}x^T,
\]
where the sum is over all set-valued weak composition tableaux of shape $\lambda$.
\end{definition}

\begin{proposition}[\protect{\cite[Definition 2.5]{Glide-paper}}]
  Given a weak composition $\lambda$, the \emph{glide polynomial} $\glide_c$ is given by 
\[
\glide_c = \sum_{\text{$d$ is a glide of $c$}}(-1)^{\excess(d)}x^d.
\]
\end{proposition}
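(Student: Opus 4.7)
The plan is to group the sum in \autoref{def:glide-polynomial} by kontent and match each summand against the proposition's sum.

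First, I would argue that the kontent map $T \mapsto \kappa(T)$ is a bijection between set-valued weak composition tableaux of shape $\lambda$ and glides of $\lambda$. Both the existence of a kontent for every tableau and the fact that every glide arises as a kontent are \autoref{lemma:set-valued-weak-composition-tableaux-kontent}. For injectivity, I would use that boxes $b_1 <_{\lex} b_2$ in any such tableau $T$ satisfy $\max(T(b_1)) \le \min(T(b_2))$. In particular, a value $j$ can be an extra entry (one sharing a box with a strictly smaller value) in at most one box: if $j$ were an extra in $b_2$, then $j > \min(T(b_2)) \ge \max(T(b_1))$, precluding $j \in T(b_1)$. Combined with the observation preceding \autoref{lemma:composition-tableaux-content} that ordinary weak composition tableaux are determined by their content, this forces the primary (minimum) entries of $T$ to be placed in lex order according to the non-bold parts of $\kappa$, and each bold $\kappa_j$ is attached as an extra in the unique box compatible with the tableau conditions.

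Second, I would verify the summand-by-summand equalities $x^T = x^\kappa$ and $\size{T} - \size{\lambda} = \excess(\kappa)$. The first is immediate because the content of $T$ equals $\kappa$ with the bold flags forgotten, so the monomials coincide. For the second, $\size{T} - \size{\lambda}$ counts entries of $T$ beyond one per box; by the uniqueness argument above, each bold $\kappa_j$ contributes exactly one such extra entry while each non-bold part contributes none, so the count equals the number of bold parts, which is $\excess(\kappa)$ by definition.

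Reindexing the sum in \autoref{def:glide-polynomial} via the kontent bijection then produces the formula in the proposition. The main obstacle is the injectivity step: one has to combine the lex-order condition on boxes with the row conditions of a weak composition tableau to see that there is no freedom in reconstructing $T$ from a glide $\kappa$. Once that is in hand, the identifications of signs and monomials are routine.
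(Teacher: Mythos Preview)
Your approach is correct and matches the paper's: its proof is the single line ``This is \autoref{lemma:set-valued-weak-composition-tableaux-kontent},'' leaving the injectivity of the kontent map and the sign and monomial identifications implicit in the surrounding discussion, whereas you spell them out. One small wording issue: the box-minima are governed not by ``the non-bold parts of $\kappa$'' but by the weak composition obtained from $\kappa$ by subtracting one from each bold entry, since a bold $\kappa_j > 1$ still contributes $\kappa_j - 1$ copies of $j$ as minima.
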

\begin{proof}
  This is \autoref{lemma:set-valued-weak-composition-tableaux-kontent}.
\end{proof}

\begin{theorem}[\protect{\cite[Theorem 2.14]{Glide-paper}}]
  Given a permutation $\pi$, the Grothendieck polynomial $\Groth_\pi$ expands into glides
\[
  \Groth_\pi = \sum_{P}(-1)^{\excess(P)} \glide_{\wt(P)},
\]
where the sum is over all \qY/ pipe dreams for $\pi$, not just the reduced ones.
\end{theorem}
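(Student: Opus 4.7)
The plan is to generalize the Assaf--Searles proof of \autoref{prop:schubert-sum-of-slides}, which expresses $\Schub_\pi$ as a sum of slide polynomials indexed by \qY/ reduced pipe dreams, to the $K$-theoretic setting. I will start from the pipe dream definition $\Groth_\pi = \sum_{P'} (-1)^{\excess(P')} x^{P'}$, with $P'$ running over all pipe dreams for $\pi$, and partition the sum according to a ``\qY/ representative'' of each $P'$.

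The first step is to show that every pipe dream $P'$ for $\pi$ arises in a unique way from some \qY/ pipe dream $P$ for $\pi$ by a sequence of downward chute and ladder moves. In the reduced case this is essentially \autoref{lemma:quasi-yamanouchi-to-word}; extending to non-reduced pipe dreams should follow from the Pechenik--Searles definition of \qY/ in \cite[Definition 2.11]{Glide-paper} together with the fact that chute and ladder moves preserve the represented permutation even when pipes cross more than once.

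The second step is, for each \qY/ pipe dream $P$ for $\pi$, to construct a bijection between the set of pipe dreams $P'$ with \qY/ representative $P$ and the set of set-valued weak composition tableaux $T$ of shape $\wt(P)$. The idea is that, traversing the crosses of $P$ in reading-word order, each cross in row $r$ of $P$ produces a non-empty set of crosses in $P'$, and the rows those crosses occupy form the entries of $T$ in the corresponding box. The constraint that entries in row $r$ satisfy $i \ge r$ reflects that chute and ladder moves only push crosses downward, and the strict inter-row inequality reflects the reading-word order. Under this bijection, $x^T = x^{P'}$ because both equal $x^{\wt(P')}$, and
\[
\size T - \size{\wt(P)} \;=\; \size{\wt(P')} - \size{\wt(P)} \;=\; \excess(P') - \excess(P),
\]
so the signs match.

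Substituting the definition of $\glide_{\wt(P)}$ from \autoref{def:glide-polynomial} and reindexing by this bijection then collapses the double sum as
\[
\sum_{P \text{ \qY/ for }\pi} (-1)^{\excess(P)} \glide_{\wt(P)}
\;=\;
\sum_{P'} (-1)^{\excess(P')} x^{P'}
\;=\; \Groth_\pi.
\]
The hard part will be step two: making the bijection precise in the non-reduced setting, and in particular showing that an arbitrary set-valued wct of shape $\wt(P)$ really does produce a pipe dream representing the same permutation $\pi$ (the added crosses need not commute past existing ones), and conversely that the downward compaction of a general pipe dream to its \qY/ representative can be inverted uniquely.
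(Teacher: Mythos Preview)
The paper does not prove this theorem at all: it is stated with the citation \cite[Theorem 2.14]{Glide-paper} and no proof is given, so there is nothing in the paper to compare your attempt against.

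On the merits of your sketch itself, the overall shape (partition all pipe dreams for $\pi$ by a \qY/ representative and match each class to the terms of a single glide polynomial) is indeed the strategy used by Pechenik and Searles, but several details in your outline are off. First, chute and ladder moves preserve the number of crosses, so they cannot take a \qY/ pipe dream $P$ to a $P'$ with $\excess(P') > \excess(P)$; the actual mechanism in \cite{Glide-paper} is a \emph{destandardization} map on (possibly non-reduced) pipe dreams that can merge crosses, and whose fibres are what you need. Second, your inequality is reversed: in a weak composition tableau, the entry $i$ may not appear above row $i$, so entries in row $r$ satisfy $i \le r$, not $i \ge r$ --- correspondingly, the other pipe dreams in a fibre are obtained by sliding crosses \emph{up} along antidiagonals (and splitting), not down. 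Third, your worry that ``the added crosses need not commute past existing ones'' is the genuine content of the argument, and it is handled not by chute/ladder moves but by tracking compatible sequences for the (longer) reading word and checking that the resulting pipe dream still has Demazure product $\pi$; you would need to reproduce that analysis rather than invoke chute moves.
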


Just like with slide polynomials, we can lift the definition to words by setting $\glide_w \defeq \glide_{\wt(P)}$ if there is a \qY/ pipe dream $P$ with reading word $w$ and $0$ otherwise. 

\section{Gr\"obner theory}
Here we define the Gr\"obner theory we need to in order to start tying things together.
\subsection{Basics}
Let $R = \bk[x_1, \dotsc, x_r]$ be a polynomial ring. A \emph{term order} (or \emph{monomial order}) is total order $\succ$ on the monomials in $R$ such that 
\begin{itemize}
\item if $m_1 \succ m_2$, then $m_1n \succ m_2n$ for every monomial $n$ in $R$,
\item $1 \nsucc m$ for every monomial $m$ in $R$.
\end{itemize}
The order extends to terms by defining $c_1 m_1 \succ c_2 m_2$ if $m_1 \succ m_2$ and $c_1$ and $c_2 $ are nonzero scalars. 
The \emph{initial term} $\init_{\succ} g$ of $g \in R$ is the maximal term of $g$ and the \emph{initial ideal} of an ideal $I$ is the ideal $\init_\succ I \defeq \idealbuilder{\init_\succ g}{g \in I}$.
Examples of term orders include the lexicographic order, where $x^a \succ x^b$ if the first non-zero coordinate of $a - b$ is positive, and reverse lexicographic order, where $x^a \succ x^b$ if $\size{a} > \size{b}$ or the last non-zero coordinate of $a-b$ is negative. 

Given a vector $w \in \bR^r_{\ge 0}$, called a \emph{weight vector}, the \emph{weight order} $\succ_w$ defined by $w$ is defined by $x^a \succ_w x^b$ if $w \cdot a > w \cdot b$.
The \emph{initial form} $\init_{\succ_w}g$ is the sum of all maximal terms of $g$ and the \emph{initial ideal} $\init_{\succ_w}I$ is the ideal generated by the initial forms of the elements of $I$.
Generally, weight orders are not total orders, however, for any term order $\succ$ and any ideal $I$, there is a weight $w \in \bR^r_{\ge0}$ such that $\init_\succ I = \init_{\succ_w} I$ \cite[Proposition 1.11]{Sturmfels-Grobner-bases-book}.
A weight order $\succ_w$ is \emph{compatible} with a (finer) order $\succ$ if $m_1 \succ_w m_2$ implies $m_1 \succ m_2$. In particular, this implies that $\init_\succ g = \init_\succ(\init_{\succ_w}g)$.

In the polynomial ring where the variables are indeterminates in an $n \times n$ matrix, $S = \bk[z_{11}, \dotsc, z_{nn}]$, a term order $\succ$ is \emph{antidiagonal} if, for every minor of the $n \times n$ matrix $M$ with $(M)_{ij} = z_{ij}$, the initial term is the antidiagonal term. 
A weight order is \emph{antidiagonal} if it is compatible with an antidiagonal term order.

\begin{example} \label{ex:antidiagonal-orders}
  Any revlex order where the variables in each row and column are decreasing, that is: $z_{i, j_1} \succ z_{i, j_2}$ when $j_1 < j_2$ and $z_{i_1, j} \succ z_{i_2, j}$ when $i_1 < i_2$ is antidiagonal. 
\end{example}
A set of generators $\{ g_1, \dotsc, g_m \}$ for an ideal $I$ is a \emph{Gr\"obner basis} of $I$ with respect to the order $\succ$ if  $\init_\succ I = \langle \init_\succ g_1, \dotsc, \init_\succ g_m\rangle$. 
One of the fundamentals of the theory is Buchberger's algorithm, which extends a finite generating set for $I$ to a Gr\"obner basis.

Given a weight vector $w$ and an ideal $I = \langle g_1, \dotsc, g_m \rangle$.  
For any $g \in R$, let $b$ be the maximum weight of a monomial in $g$ and let 
\[
\tilde{g} = t^bg(t^{-w_1}x_1, \dotsc, t^{-w_r}x_r) \in R[t],
\] 
and $\tilde{I} = \langle \tilde{g}_1, \dotsc, \tilde{g}_n\rangle$. Then, $R[t]/\tilde{I}$ defines a flat family whose general fiber is $R/I$ and whose special fiber is $R/ \init_{\succ w} I$ \cite[Theorem 15.17]{Eisenbud}.
We therefore call $\init_{\succ_w} I$ a \emph{Gr\"obner degeneration} of $I$.

\subsection{Polyhedra and the antidiagonal cone}
A reference for this section is Chapter 2 of \cite{Sturmfels-Grobner-bases-book}. 

A \emph{polyhedron} is the intersection of finitely many half spaces in $\bR^n$. A matrix $A$, and a vector $b$, define a polyhedron $P = \{\,v \st Av \ge b\,\}$.
If $b = 0$, the polyhedron is a \emph{cone}. If $P$ is a cone, there are vectors $R_i$ such that every $v$ in $P$ can be written of the form $v = \sum_i a_iR_i$, with all $a_i \ge 0$. 
The intersection of a cone $P = \{\,v \st A v \ge 0\,\}$ with its negative $-P = \{\,v \st -Av \ge 0\,\}$ is a vector space called the \emph{lineality space} of $P$. 
Letting $L_i$ be basis vectors for the lineality space of $P$, and only including those $R_i$ that are not in the lineality space of $P$, we can write each vector $v$ in $P$ as
\[
v = \sum_i a_iR_i + \sum_i b_iL_i.
\]
The $R_i$ are called \emph{rays}, and the $L_i$ are called \emph{lines}.

A subset $F$ of a polyhedron $P$ is called a \emph{face} if it maximizes some linear functional $u$: $F = \{\,v \in P \st \text{$u \cdot v \ge u \cdot w$ for all $w$ in $P$}\,\}$.
When $P$ is a cone, if a linear functional has a maximum on $P$, that maximum has to be equal to $0$.
So the faces are of the form $F = \{\,v \in P \st u \cdot v = 0\,\}$, where $u \cdot w \le 0$ for all $w \in P$. Adding a row $u$ and a row $-u$ to the matrix $A$ and two zeros to $b$, we see that the faces of a cone are again cones. The \emph{interior} of a polyhedron $P$ is the set of $w \in P$ that do not lie in any of its proper faces.

Given a weight vector $w$, an ideal $I$ and a Gr\"obner basis $\{ g_1, \dotsc, g_k \}$ for $I$ with respect to $\succ_w$. The set of weight vectors $w'$ such that $\init_{\succ_w} I = \init_{\succ_{w'}}$ form an open cone $C[w]$: it is defined by inequalities $w' \cdot a = w' \cdot b$ when $x^a$ and $x^b$ are terms in $\init_w g_i$ and $w' \cdot a > w' \cdot c$ when $x^c$ is not.  Its closure $\overbar{C[w]}$ contains all weight vectors $w'$ that are compatible with $w$, and there is a conservation of degeneracy: the more degenerate $w'$ is, the less it degenerates $I$, and vice versa. If $w' \in \overbar{C[w]}$ and $F$ is the maximal face of $\overbar{C[w]}$ containing $w'$, then $F = C[w']$, so all the weight vectors in $F$ define the same Gr\"obner degeneration of $I$. Conversely, if $w' \notin C[w]$, by definition of $C[w]$, $w$ and $w'$ define different Gr\"obner degenerations of $I$. That is, the Gr\"obner degenerations of $I$ using to weight orders compatible with $\succ_w$ are in one-to-one correspondence with the faces of $C[w]$.
When $I$ is a determinental ideal in $\bk[z_{11}, \dotsc, z_{nn}]$ and $\succ_w$ an antidiagonal term order, $\overbar{C[w]}$ is the \emph{antidiagonal cone} of $I$.

From a slightly different perspective, the antidiagonal cone is the part of the Gr\"obner fan that consists of the normal cones of a neighborhood of the antidiagonal initial ideal in the state polytope of $I$.

\section{Geometry}
Here we tie the previously defined objects and polynomials to the geometry of matrix Schubert varieties.
\subsection{Matrix Schubert varieties}
Let $\pi$ be a permutation in $S_n$. The \emph{permutation matrix} $M_\pi$ is the matrix defined by $(M_\pi)_{ij} = 1$ if $\pi(i) = j$ and $0$ otherwise. 
Given a matrix $A$, denote by $A_{[p,q]}$ the $p\times q$ matrix in the north-west corner of $A$. The \emph{matrix Schubert variety} $\mSchub\pi$ is the variety of all $n \times n$ matrices $A$ satisfying $\rank A_{[p,q]} \le \rank (M_\pi)_{[p,q]}$ for all $p$ and $q$. 
The ideal $I_\pi$ generated by these equations is prime \cite{Fulton-92}.

The \emph{Rothe diagram} of $\pi$ consists of an $n\times n$ square of boxes, with dots in the boxes $(i, \pi(i))$ and the boxes to the right and below the dots crossed out.
The boxes that contain neither a line nor a dot form Young diagrams (for partitions), and the south-eastern-most boxes in these Young diagrams make up the \emph{Fulton essential set} $\Ess(\pi)$. See \autoref{fig:rothe-diagram-example} for an example. 
The rank conditions $\rank A_{[p,q]} \le \rank (M_\pi)_{[p,q]}$ for $(p,q)$ in the Fulton essential set imply all the other rank conditions that define $\mSchub\pi$ \cite[Lemma 3.10]{Fulton-92}.
Thus, the ideal $I_\pi$ of $\mSchub\pi$ in $\bk[z_{11}, \dotsc, z_{nn}]$ is generated by the minors of $A_{[p, q]}$ of size $\rank{(M_\pi)_{[p,q]}} + 1$  for $(p, q)$ in the Rothe diagram of $\pi$.
Since the dots in the Rothe diagram are where the ones are in $M_\pi$, these rank conditions are of the form ``$\rank A_{[p, q]}$ is less than or equal to the number of dots to the north-west of $(p,q)$ in the Rothe diagram of $\pi$.'' 
\begin{example}
\begin{figure}
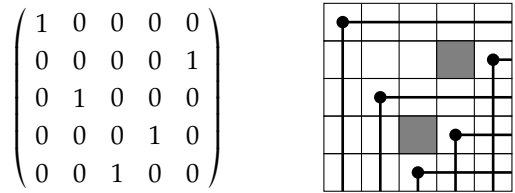
\centering
\includestandalone{rothe-diagram}
\caption[The permutation matrix and Rothe diagram for $\sqbl15243\sqbr$.]{The permutation matrix and Rothe diagram for $[15243]$. The Fulton essential set is marked with gray.}
\label{fig:rothe-diagram-example}
\end{figure}
The Rothe diagram for $[15243]$ is given in \autoref{fig:rothe-diagram-example}, so $\mSchub{[15243]}$ is given by $\rank A_{2,4} \le 1$ and  $\rank A_{4,3} \le 2$. The ideal of $\mSchub{[15243]}$ is 
\begin{align*}
 I_{[15243]} = \langle
&
  \vertMatrixstack{z_{11} & z_{12} \\ z_{21}  & z_{22}
  },
  \vertMatrixstack{z_{11} & z_{13} \\ z_{21}  & z_{23}
  },
  \vertMatrixstack{z_{11} & z_{14} \\ z_{21}  & z_{24}
  },
  \vertMatrixstack{z_{12} & z_{13} \\ z_{22}  & z_{23}
  },
  \vertMatrixstack{z_{12} & z_{14} \\ z_{22}  & z_{24}
  },
  \vertMatrixstack{z_{13} & z_{14} \\ z_{23}  & z_{24}
  },\\ 
  &
  \vertMatrixstack{z_{11} & z_{12} & z_{13} \\ z_{21}  & z_{22} & z_{23} \\ z_{31} & z_{32} & z_{33}
  }, 
  \vertMatrixstack{z_{11} & z_{12} & z_{13} \\ z_{21}  & z_{22} & z_{23} \\ z_{41} & z_{42} & z_{43}
  },                                                                              
  \vertMatrixstack{z_{11} & z_{12} & z_{13} \\ z_{31}  & z_{32} & z_{33} \\ z_{41} & z_{42} & z_{43}
  },  
  \vertMatrixstack{z_{21} & z_{22} & z_{23} \\ z_{31}  & z_{32} & z_{33} \\ z_{41} & z_{42} & z_{43}
  }  
 \rangle. \qedhere
\end{align*}
\end{example}

\subsection{Homological invariants}\label{sec:geometry}
A reference for this section, where things are done in more generality, is \cite{Miller-Sturmfels}. See also \cite{Woo-Yong-Survey-paper} for a more specific account, as well as a more recent survey of related results.

A polynomial ring $R = \bk[z_1, \dotsc, z_r]$ is \emph{multigraded by $\bZ^n$} if it can be written as a direct sum $R = \bigoplus_{\ba\in \bZ^n} R_\ba$, with $R_\ba R_\bb \subseteq R_{\ba+\bb}$. 
If $R_\bO = \bk$, the grading is \emph{positive}. 
An $R$-module $M$ is \emph{multigraded} if it can be written as a direct sum $M = \bigoplus_{\ba \in \bZ^n} M_\ba$, with $R_\ba M_\bb \subseteq M_{\ba + \bb}$.

If $M$ is finitely generated and $R$ is positively graded, then the vector space dimension of each $M_\ba$ over $\bk$ is finite \cite[Theorem 8.6]{Miller-Sturmfels}. The \emph{Hilbert function} of $M$ is the function $\ba \mapsto \dim_\bk(M_\ba)$. 
and the \emph{Hilbert series} of $M$ is the Laurent series $\sH(M;\bx) = \sum_{\ba \in A}\dim_\bk(M_\ba)\bx^\ba$. 
\begin{theorem}[\protect{\cite[Theorem 8.20]{Miller-Sturmfels}}]
If $M$ is a finitely generated graded module and $R$ is positively graded, there is a Laurent polynomial $\sK(M;\bx)$, the \emph{$K$-polynomial} of $M$, such that 
\[
\sH(M;\bx) = \frac{\sK(M;\bx)}{\prod_{\ba \in \bZ^n}(1 - \bx^\ba)}.
\]
\end{theorem}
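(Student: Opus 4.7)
The plan is to construct $\sK(M;\bx)$ as the alternating sum of shift degrees appearing in a finite graded free resolution of $M$. Since $R$ is positively graded with $R_\bO = \bk$ and $M$ is finitely generated, I would first produce a graded free resolution
\[
0 \to F_k \to F_{k-1} \to \cdots \to F_1 \to F_0 \to M \to 0,
\]
where each $F_i = \bigoplus_j R(-\ba_{ij})$ is a finite direct sum of shifted copies of $R$, and the resolution terminates after finitely many steps. Existence of such a resolution is Hilbert's syzygy theorem in the multigraded setting, and it produces length at most the number of variables of $R$.

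Next I would combine two ingredients. First, directly from the definitions, $\sH(R(-\ba);\bx) = \bx^\ba \sH(R;\bx)$, and $\sH(R;\bx)$ is a geometric-series product indexed by the variables of $R$, which is precisely the denominator appearing in the statement. Second, the Hilbert series is additive on short exact sequences of finitely generated multigraded modules: multidegree by multidegree it is just the $\bk$-vector space dimension, and dimensions are additive on short exact sequences of finite-dimensional vector spaces (the finite-dimensionality of each $M_\ba$ is exactly \cite[Theorem 8.6]{Miller-Sturmfels}, already invoked above). Splitting the resolution into short exact sequences and alternating signs yields
\[
\sH(M;\bx) = \sum_{i=0}^k (-1)^i \sH(F_i;\bx) = \frac{\sum_{i,j}(-1)^i \bx^{\ba_{ij}}}{\prod_\ell(1-\bx^{\ba_\ell})}.
\]
I would then \emph{define} $\sK(M;\bx) \defeq \sum_{i,j}(-1)^i\bx^{\ba_{ij}}$, which is by construction a Laurent polynomial.

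The main obstacle is the finiteness of the free resolution, i.e. Hilbert's syzygy theorem in the graded setting. The standard approach is induction on the number of variables: adjoining one variable at a time and splicing together resolutions over $R/(z_\ell)$ with those over $R$, showing that the projective dimension grows by at most one at each step. A mild secondary technicality is that the definition of $\sK$ needs to be shown independent of the chosen resolution; this is immediate from additivity of Hilbert series (two resolutions produce the same Hilbert series on the left-hand side, hence the same numerator after clearing the common denominator, which is not a zero-divisor in the ring of Laurent series).
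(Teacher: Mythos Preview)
The paper does not prove this theorem; it is quoted from \cite[Theorem 8.20]{Miller-Sturmfels} as background and no proof is given in the thesis. Your outline via a finite graded free resolution and additivity of Hilbert series on short exact sequences is exactly the standard argument (and is the one Miller--Sturmfels use), so there is nothing to compare against here.
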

The \emph{multidegree} of $M$, $\sC(M;\bx)$, is the sum of all terms of $\sK\lBrack M; \bone - \bx\rBrack$ of degree $r - \dim(M)$. 

To compute $\sK(R/I, \bx)$ and $\sC(R/I, \bx)$, we can use the following three propositions.
\begin{proposition}\cite[Theorem 8.36]{Miller-Sturmfels}\label{thm:k-polynomial-degenerative}
The $K$-polynomial is invariant under degeneration: given a weight vector $w$, 
\[
\sK(R/I, \bx) = \sK(R/\init_w(I), \bx).
\]
\end{proposition}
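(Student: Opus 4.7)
The plan is to exhibit a flat one-parameter family whose general fiber is $R/I$ and whose special fiber is $R/\init_{\succ_w}(I)$, then invoke the fact that in such a family the multigraded Hilbert function (and hence the $K$-polynomial) is constant on fibers. We are essentially told half the argument in the paragraph immediately preceding the statement: choose a finite generating set $g_1, \dotsc, g_m$ for $I$, form the homogenizations $\tilde g_i = t^{b_i} g_i(t^{-w_1}x_1, \dotsc, t^{-w_r}x_r) \in R[t]$, and set $\tilde I = \langle \tilde g_1, \dotsc, \tilde g_m\rangle$. The cited result \cite[Theorem 15.17]{Eisenbud} gives exactly that $R[t]/\tilde I$ is a flat $\bk[t]$-algebra with generic fiber $R/I$ (localize at $t-c$ for $c\neq 0$, or equivalently specialize $t\mapsto 1$) and special fiber $R/\init_{\succ_w}(I)$ (specialize $t\mapsto 0$). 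To make this step valid one should take the $g_i$ to form a Gr\"obner basis with respect to $\succ_w$, so that the initial forms really generate $\init_{\succ_w}(I)$; this is harmless by Buchberger's algorithm.

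Next, I would promote the whole picture to the multigraded setting. Since $\sK(R/I;\bx)$ is defined, the ideal $I$ is homogeneous with respect to the $\bZ^n$-grading on $R$, so we may choose the generators $g_i$ to be multigraded as well. Because $\init_{\succ_w}$ selects a subset of the terms of a given polynomial and all terms of a multigraded polynomial share one multidegree, $\init_{\succ_w}(I)$ is also multigraded, and each $\tilde g_i$ is multigraded in $R[t]$ once we give $t$ multidegree $\bO$. Thus $R[t]/\tilde I$ is a multigraded, flat $\bk[t]$-module, and for each $\ba \in \bZ^n$ the piece $(R[t]/\tilde I)_\ba$ is a flat $\bk[t]$-module of finite rank by positivity of the grading.

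For any finitely generated flat $\bk[t]$-module that is graded with $t$ in degree $\bO$, the rank is constant along $\Spec \bk[t]$, so
\[
\dim_\bk (R/I)_\ba \;=\; \dim_\bk (R[t]/\tilde I)_\ba \otimes_{\bk[t]} \bk[t,t^{-1}] \big/ (t-1) \;=\; \dim_\bk (R/\init_{\succ_w} I)_\ba
\]
for every $\ba$. Summing against $\bx^\ba$ gives $\sH(R/I;\bx) = \sH(R/\init_{\succ_w} I;\bx)$, and the formula
\[
\sH(M;\bx) \;=\; \frac{\sK(M;\bx)}{\prod_{\ba}(1-\bx^\ba)}
\]
of the preceding theorem uniquely recovers the $K$-polynomial from the Hilbert series (both modules live over the same $R$, so the denominator is the same). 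Therefore $\sK(R/I;\bx)=\sK(R/\init_{\succ_w} I;\bx)$.

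The main subtlety is not the flat family itself, which is entirely standard, but the bookkeeping showing that the $\bk[t]$-grading by $t$-degree is compatible with the ambient $\bZ^n$-multigrading; this is what lets one pass from flatness to constancy of each multigraded piece. Once $t$ is assigned multidegree $\bO$ and the $g_i$ are chosen multigraded, everything separates cleanly across multidegrees, and the argument reduces to the familiar ``flat modules over a PID are free, hence have constant rank on fibers'' slogan applied one multidegree at a time.
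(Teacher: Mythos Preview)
The paper does not actually prove this proposition; it is simply quoted from \cite[Theorem 8.36]{Miller-Sturmfels} with no argument given, so there is no ``paper's own proof'' to compare against.

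Your argument is the standard one and is correct. A couple of minor points worth tightening: you should say explicitly why each graded piece $(R[t]/\tilde I)_\ba$ is a \emph{finitely generated} $\bk[t]$-module (this follows because positivity of the $\bZ^n$-grading on $R$ forces $R_\ba$ to be finite-dimensional over $\bk$, so $(R[t])_\ba \cong R_\ba \otimes_\bk \bk[t]$ is free of finite rank and the quotient is finitely generated). With that in hand, ``finitely generated flat over a PID implies free'' gives constant rank on fibers, and the rest follows. Also, note that the paper's preceding paragraph does not assume the $g_i$ are a Gr\"obner basis when defining $\tilde I$, so strictly speaking one should either saturate $\tilde I$ by $t$ or, as you do, take the $g_i$ to be a Gr\"obner basis from the outset; your remark about Buchberger handles this.
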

\begin{proposition}\cite[Theorem 8.53]{Miller-Sturmfels}\label{thm:c-polynomial-additive}
  The multidegree $\sC(R/I;\bx)$ is additive: 
\[
\sC(R/I; \bx) = \sum_{k = 1}^n \mult_{\fp_k}(R/I) \cdot \sC(R/\fp_k),
\]
where $\fp_1$, \ldots, $\fp_n$ are the maximal dimensional associated primes of $I$ and $\mult_{\fp_k}$ is the multiplicity of $\fp_k$ in $R/I$ (in the case we are interested in, the multiplicities are all $1$). 
\end{proposition}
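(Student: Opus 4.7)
The plan is to deduce this from additivity of $K$-polynomials on short exact sequences, together with the fact that forming the multidegree keeps only the lowest-degree homogeneous component of $\sK(M;\bone-\bx)$. Only maximal-dimensional associated primes survive that truncation, which is why the formula sums over precisely those primes.

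First, I would construct a prime filtration of $R/I$: a chain
\[
0 = M_0 \subset M_1 \subset \dotsb \subset M_\ell = R/I
\]
in which each successive quotient $M_i/M_{i-1}$ is isomorphic to a graded shift of $R/\fp$ for some associated prime $\fp$ of $R/I$. Such a filtration exists because $R/I$ is a finitely generated module over the noetherian ring $R$. Using additivity of $K$-polynomials on each short exact sequence $0 \to M_{i-1} \to M_i \to M_i/M_{i-1} \to 0$ in turn, I obtain
\[
\sK(R/I;\bx) = \sum_{i=1}^\ell \bx^{\ba_i}\sK(R/\fp_i;\bx),
\]
where the shift $\bx^{\ba_i}$ records a grading twist and the primes $\fp_i$ (with multiplicity) enumerate the successive quotients.

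The key observation is that $\sK(R/\fp;\bone-\bx)$ has no terms of degree less than $r-\dim(R/\fp)$; this is the standard fact needed to show that multidegrees are well-defined. Hence under the substitution $\bx\mapsto \bone-\bx$, any quotient $R/\fp_i$ with $\dim(R/\fp_i)<\dim(R/I)$ contributes only terms of degree strictly greater than $r-\dim(R/I)$, which are discarded when we extract the degree $r-\dim(R/I)$ part. The factor $\bx^{\ba_i}$ becomes $(\bone-\bx)^{\ba_i}$, whose expansion begins with $\bone$ and so disappears at the lowest degree as well. What survives is exactly
\[
\sC(R/I;\bx) = \sum_k m_k\,\sC(R/\fp_k;\bx),
\]
summed over the maximal-dimensional associated primes $\fp_1,\dotsc,\fp_n$, where $m_k$ counts how many times $\fp_k$ appears in the filtration.

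The main obstacle will be identifying $m_k$ with the intrinsic multiplicity $\mult_{\fp_k}(R/I)$, since \emph{a priori} $m_k$ depends on the chosen filtration. The standard remedy is to localize at $\fp_k$: the other maximal-dimensional associated primes are not contained in $\fp_k$, so their pieces of the filtration vanish after localization, and the surviving pieces show that $m_k$ equals the length of $(R/I)_{\fp_k}$ over the local ring $R_{\fp_k}$. This length is intrinsic and by definition equals $\mult_{\fp_k}(R/I)$. In the matrix Schubert setting, $I_\pi$ is prime, so the filtration is essentially trivial and each multiplicity is $1$, as noted in the statement.
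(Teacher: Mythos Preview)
The paper does not supply its own proof of this proposition; it is quoted directly from Miller--Sturmfels as background. So there is nothing in the paper to compare your argument against. That said, your sketch is essentially the standard argument one finds in that reference, and it is correct in outline.

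One imprecision worth flagging: in a prime filtration of $R/I$, the successive quotients are of the form $R/\fp$ for primes $\fp \supseteq I$, but these primes are \emph{not} all required to be associated primes of $R/I$; embedded or incidental primes can appear. What is true is that every associated prime occurs among them, and---more to the point here---any prime $\fp$ appearing with $\dim(R/\fp) = \dim(R/I)$ is forced to be a minimal (hence associated) prime of $I$. So the non-associated primes are automatically killed by the degree truncation, and your conclusion is unaffected. Your localization step is also fine: since each $\fp_k$ is minimal over $I$, no other prime in the filtration can sit inside $\fp_k$, so localizing at $\fp_k$ annihilates every quotient except the copies of $R/\fp_k$, and the count $m_k$ is indeed the length of $(R/I)_{\fp_k}$.
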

\begin{proposition}\cite[Proposition 8.49]{Miller-Sturmfels}\label{thm:c-polynomial-coordinate-hyperplane}
If $I = \langle z_{i_1}, \dotsc, z_{i_k} \rangle$, then
\[
\sC(R/I; \bx) = \langle \ba_{i_1},\bx\rangle \dotsm \langle \ba_{i_k}, \bx \rangle,
\]
where $\ba_{i_j} = \deg(z_{i_j})$ and $\langle \ba, \bx \rangle = a_1 x_1 + \dotsb + a_r x_r$.
\end{proposition}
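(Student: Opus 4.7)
The plan is to exploit the fact that the generators $z_{i_1},\dotsc,z_{i_k}$ form a regular sequence in $R$, which lets us write down an explicit minimal free resolution of $R/I$, and then to track what happens under the substitution $\bx\mapsto\bone-\bx$ in a degree-controlled way.

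First, I would compute the $K$-polynomial directly from the Koszul resolution. Because $z_{i_1},\dotsc,z_{i_k}$ is a regular sequence (variables are always a regular sequence after any reordering), the Koszul complex $K_\bullet(z_{i_1},\dotsc,z_{i_k})$ is a minimal multigraded free resolution of $R/I$, with
\[
K_p \;=\; \bigoplus_{1\le j_1<\dotsb<j_p\le k} R\bigl(-\ba_{i_{j_1}}-\dotsb-\ba_{i_{j_p}}\bigr).
\]
Taking the alternating sum of the class of each $K_p$ and factoring, one obtains
\[
\sK(R/I;\bx) \;=\; \sum_{p=0}^{k}(-1)^{p}\!\!\sum_{j_1<\dotsb<j_p}\bx^{\ba_{i_{j_1}}+\dotsb+\ba_{i_{j_p}}} \;=\; \prod_{j=1}^{k}\bigl(1-\bx^{\ba_{i_j}}\bigr).
\]

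Second, I would apply the substitution $\bx\mapsto\bone-\bx$ (that is, $x_\ell\mapsto 1-x_\ell$ in every monomial, interpreted multiplicatively on exponent vectors so that $\bx^{\ba}\mapsto(\bone-\bx)^{\ba}=\prod_\ell(1-x_\ell)^{a_\ell}$) to get
\[
\sK\lBrack R/I;\bone-\bx\rBrack \;=\; \prod_{j=1}^{k}\Bigl(1-(\bone-\bx)^{\ba_{i_j}}\Bigr).
\]
Expanding each factor by the binomial theorem gives $(\bone-\bx)^{\ba_{i_j}}=1-\langle \ba_{i_j},\bx\rangle+\text{(terms of degree $\ge 2$)}$, so each factor $1-(\bone-\bx)^{\ba_{i_j}}$ has lowest-degree term precisely $\langle \ba_{i_j},\bx\rangle$, which is homogeneous of degree $1$.

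Third, I would extract the correct homogeneous component. Since we killed $k$ of the $r$ generators of $R$, $\dim(R/I)=r-k$, so by definition $\sC(R/I;\bx)$ is the degree-$k$ part of $\sK\lBrack R/I;\bone-\bx\rBrack$. Multiplying $k$ power series whose lowest-degree parts are linear, the unique contribution to degree $k$ comes from taking the linear term of every factor, giving
\[
\sC(R/I;\bx) \;=\; \langle\ba_{i_1},\bx\rangle\dotsm\langle\ba_{i_k},\bx\rangle,
\]
as claimed. The only nontrivial step is verifying that the Koszul complex really is a minimal resolution (equivalently, that the $z_{i_j}$ form a regular sequence in $R$), which is immediate since distinct variables in a polynomial ring always do; everything after that is bookkeeping on a product of power series.
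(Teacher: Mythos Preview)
Your argument is correct: the Koszul resolution gives $\sK(R/I;\bx)=\prod_{j=1}^{k}(1-\bx^{\ba_{i_j}})$, and substituting $\bx\mapsto\bone-\bx$ and picking off the degree-$k$ part yields the product of linear forms. The paper does not give its own proof of this statement; it simply cites \cite[Proposition 8.49]{Miller-Sturmfels}, and your Koszul-complex computation is exactly the standard proof one finds there.
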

To tie everything togehter, we apply this to the ring $S = \bk[z_{11}, \dotsc, z_{nn}]$, $\bZ^{n}$-graded by matrix rows. 
\begin{proposition}[\protect{\cite[Theorem B]{Grobner-geometry-paper}}]\label{thm:to-pipe-dreams}
If $I_\pi$ is the ideal of a matrix Schubert variety $\mSchub{\pi}$, and $\succ$ is an antidiagonal term order, then 
\[
\init_\succ I_\pi = \bigcap_{P \in \PD(\pi)} \idealbuilder{z_{ij}}{\text{$(i,j)$ is a cross in $P$}}.
\] 
\end{proposition}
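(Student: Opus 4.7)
The plan is to identify both sides with the Stanley--Reisner ideal $I_{\Delta(Q, \pi)}$ of the pipe dream complex, where $Q$ is the triangular word for $S_n$.

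First, I would verify that the RHS equals $I_{\Delta(Q, \pi)}$. Reduced pipe dreams for $\pi$ correspond bijectively to reduced subwords of $Q$ representing $\pi$ via their cross positions (cf.\ \autoref{sec:pipe-dreams}), so the facets of $\Delta(Q, \pi)$ are exactly the complements (inside the staircase of positions in $Q$) of the cross-sets of pipe dreams in $\PD(\pi)$. The minimal primes of the Stanley--Reisner ideal of any simplicial complex are the monomial primes generated by the complements of its facets, so
\[
I_{\Delta(Q, \pi)} = \bigcap_{P \in \PD(\pi)}\idealbuilder{z_{ij}}{(i,j) \text{ is a cross in } P},
\]
which is exactly the RHS.

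Second, I would prove the inclusion $I_{\Delta(Q, \pi)} \subseteq \init_\succ I_\pi$. By Fulton, $I_\pi$ is generated by the minors of size $\rank((M_\pi)_{[p,q]}) + 1$ of $Z_{[p,q]}$ as $(p,q)$ ranges over the Rothe diagram, where $Z$ is the matrix of variables $z_{ij}$. Under any antidiagonal term order, the initial term of such a minor with rows $i_1 < \dotsb < i_r$ and columns $j_1 < \dotsb < j_r$ is the squarefree monomial $\prod_k z_{i_k, j_{r+1-k}}$. To get the desired inclusion I would show that every minimal non-face of $\Delta(Q, \pi)$ is the support of some such antidiagonal. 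Combinatorially, a minimal non-face is a minimal set of positions that every reduced pipe dream for $\pi$ must cross, and a rank argument, using the fact that avoiding an antidiagonal inside a $[p, q]$ window forces too many independent pipes through that window, matches these sets with the antidiagonals of Fulton minors.

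Third, I would match Hilbert series to conclude. Since $\Delta(Q, \pi)$ is shellable by \autoref{thm:subword-complex-vertex-decomposable}, its Stanley--Reisner ring has a $K$-polynomial computable facet-by-facet via the shelling, and the pipe-dream formula for Grothendieck polynomials identifies this sum with $\Groth_\pi$. The $K$-polynomial of $S/I_\pi$ is likewise $\Groth_\pi$ (Grothendieck polynomials are $K$-classes of matrix Schubert varieties), and by \autoref{thm:k-polynomial-degenerative} $K$-polynomials are invariant under Gr\"obner degeneration. Hence $S/\init_\succ I_\pi$ and $S/I_{\Delta(Q, \pi)}$ have the same Hilbert series, and combined with the inclusion from step two this forces equality.

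The main obstacle is the combinatorial claim in step two: every minimal non-face of the pipe dream complex arises as the support of the antidiagonal of some Fulton minor. Classifying these minimal non-faces, and showing they match the rank conditions cutting out $\mSchub\pi$, is the algebraic-combinatorial bridge the theorem asserts and is the crux of the proof.
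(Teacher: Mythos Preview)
The paper does not supply its own proof of this proposition: it is quoted verbatim as \cite[Theorem B]{Grobner-geometry-paper} and used as a black box to derive $\sC(S/I_\pi;\bx)=\Schub_\pi$. So there is nothing in the paper to compare your argument against; your proposal is a sketch of the original Knutson--Miller proof, not of anything the present paper does.

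On the substance of your sketch: the overall architecture (identify the RHS with a Stanley--Reisner ideal, prove one containment combinatorially, close the gap with a Hilbert-series equality) is exactly the Knutson--Miller strategy, but two points deserve care. First, you aim for the inclusion $I_{\Delta(Q,\pi)}\subseteq\init_\succ I_\pi$ by showing every minimal non-face is an antidiagonal of a Fulton minor. Knutson--Miller actually establish the opposite (and easier) containment, that every antidiagonal of a Fulton minor is a non-face, and then upgrade via Hilbert series; your direction is the statement that the Fulton minors already form a Gr\"obner basis, which is the conclusion rather than an intermediate step. Second, your Hilbert-series step invokes both the pipe-dream formula for $\Groth_\pi$ and the identification of $\Groth_\pi$ with $\sK(S/I_\pi)$. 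In \cite{Grobner-geometry-paper} these are Theorems~A and~B, proved simultaneously, so appealing to Theorem~A here is circular unless you supply an independent source for $\sK(S/I_\pi)=\Groth_\pi$. The original argument avoids this by comparing Hilbert series through an inductive (mitosis/divided-difference) computation rather than by naming the common value as $\Groth_\pi$ in advance.
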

\label{sec:schubert-polynomial-multidegree}
So, combining Propositions \ref{thm:k-polynomial-degenerative}, \ref{thm:c-polynomial-additive}, \ref{thm:c-polynomial-coordinate-hyperplane}, and \ref{thm:to-pipe-dreams}, the multidegrees of $S/I_\pi$ is the Schubert polynomial of $\pi$: 
\[
\sC(S/I_\pi; \bx) = \sC(S/\init I_\pi; \bx) =  \sum_{P \in \PD(\pi)} x^P = \Schub_\pi(\bx).
\]
Further, $\sK(S/I_\pi; \bone - \bx)$ is the Grothendieck polynomial $\Groth_\pi$ \cite[Theorem A]{Grobner-geometry-paper}.

\chapter{Results}
Here we present our main results. They are all at least loosely inspired by the expansion of Schubert polynomials into slide polynomials.
\section{Bijections}
\label{sec:multiplying-slides}
By \autoref{thm:Nenashev-shuffling}, there exists a bijection between shuffles of words for $\pi$ and $\rho$ and words for the $\sigma$s in the product $\Schub_{\pi} \Schub_{\rho} = \sum c_{\pi\rho}^\sigma\Schub_\sigma$ (with multiplicity, and dealing appropriately with non-positive letters).
We give such bijections in the cases of Monk's rule and Sottile's Pieri rule (which are both multiplicity-free). The bijections are similar to Little's bumping bijection that was used to give a bijective proof of the Schur positivity of Stanley symmetric functions \cite{Little-bump-paper} and Macdonald's reduced word identity \cite{Billey-Holroyd-Young-Little-bump-paper}.

Bijective proofs of Monk's rule and Sottile's Pieri rule, as well as some other cases already exist. Sottile \cite[Theorem 1]{Sottile-Pieri-rule-paper} gave explicit formulas for the cases we are considering, essentially working on the level of permutations. On the other hand, Bergeron and Billey \cite[Section 5]{RC-graphs-and-schubert-polynomials} found a bijection for Monk's rule at the level of pipe dreams, and Kogan and Kumar \cite[Section 3]{Kogan-Kumar} found a bijection for Sottile's Pieri rule, also at the level of pipe dreams.
Our bijections at the level of reduced words are in between these two levels. There is more freedom than at the permutation level but also more structure than at the level of pipe dreams.
\subsection{Monk's rule}\label{sec:monk's-rule}
Monk's rule expands the product $\Schub_{\pi}\Schub_{s_i}$ as a sum of Schubert polynomials. Monk \cite[Theorem 3]{Monk} gave the geometric version of the rule.
\begin{theorem}
For $\pi \in S_\bZ$ and $i \in \bZ$,
\[
\Schub_\pi\Schub_{s_i} = \sum_{\substack{a \le i < b\\\len(\pi t_{ab}) = \len(\pi) + 1}}\Schub_{\pi t_{ab}}.
\]
\end{theorem}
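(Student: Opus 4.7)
I aim to prove Monk's rule by constructing a bijection, respecting compatible sequences, between shuffles of a reduced word for $\pi$ with the one-letter word $i \in \RW(s_i)$ and reduced words for the permutations $\pi t_{ab}$ appearing on the right-hand side. Combined with the reduced-words formula of \autoref{prop:back-stable-reduced-words}, such a compatible-sequence-preserving bijection yields the polynomial identity; in particular, the count equality provided by \autoref{thm:Nenashev-shuffling} will be promoted to a polynomial identity.

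The bijection I construct takes a shuffle $(w, p)$ (meaning: insert the letter $i$ after the $p$th letter of $w$) and first forms the length-$(\len(\pi)+1)$ word $v \defeq w_1 \dotsm w_p\, i\, w_{p+1} \dotsm w_n$. In the wiring-diagram picture this inserts a new cross at height $i$ between columns $p$ and $p+1$, making it an $(a, b)$-cross where $a = \wl{w}{p}{i}$ and $b = \wl{w}{p}{i+1}$. Two obstructions can occur: $v$ may be non-reduced (the wires labeled $a$ and $b$ have already crossed in $w$), or, even if $v$ is reduced, the resulting transposition $t_{ab}$ might fail the Monk condition $a \le i < b$ (a small example with $\pi = [312]$, $i = 1$ exhibits both failures). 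In either case, I apply a Little-style bump in $S_\bZ$, which is why back-stability is the natural setting: non-positive letters must be permitted for the map to be surjective. The active letter, identified via \autoref{lemma:defect-lemma} when $v$ is not reduced, or via the position of the obstructing $(a,b)$-cross when $v$ is reduced but $a > i$, is decremented by one.

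Iterating the bump, I arrive at a reduced word $w^*$ representing $\pi t_{a^* b^*}$ with $a^* \le i < b^*$ and $\len(\pi t_{a^*b^*}) = \len(\pi) + 1$. Termination follows because each bump strictly decreases a natural measure of how far the active crossing is from its intended Monk position, together with the fact that the distinguished pair of wires is preserved across bumps. For the inverse, I run the reverse bumping starting from $w^*$: locate the distinguished $(a,b)$-cross at height $i$ (the unique cross whose removal leaves a reduced word for $\pi$ missing a single crossing in the distinguished pair) and increment the active letter at each step until it equals $i$, at which point removing it recovers $w$ together with the insertion position $p$.

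The main obstacle is proving that the forward and reverse bumpings are mutually inverse, which requires a careful case analysis and a diagram chase in the wiring diagram, closely analogous to the arguments of \cite{Billey-Holroyd-Young-Little-bump-paper} for Little's map but adapted to the single-transposition target $\pi t_{ab}$. Additionally, I must verify that each bump respects compatible sequences: a decrement of a letter is matched by a decrement of its compatible-sequence entry, so that the monomial $x^s$ contributed by a shuffle equals the monomial contributed by its image. This last step is what upgrades the bijection of \autoref{thm:Nenashev-shuffling} from an equality of cardinalities to the polynomial identity stated in the theorem.
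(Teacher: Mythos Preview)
The paper does not prove Monk's rule; it states the theorem and attributes it to Monk~\cite{Monk}. What the paper does prove, via \autoref{alg:monk-shuffling} and \autoref{alg:monk-unshuffling} and the surrounding lemmas, is that there is a bijection between shuffles of $i$ into $\RW(\pi)$ and $\bigcup_{a\le i<b}\RW(\pi t_{ab})$; but the stated purpose of that bijection is to realize the numerical identity of \autoref{thm:Nenashev-shuffling} in this special case, taking Monk's rule as already known. Crucially, the paper is explicit that its bijection does \emph{not} respect monomials: the Remark following \autoref{prop:monk-shuffle-is-back-stable} exhibits shuffles of $2$ into $232$ whose rectifications have slide polynomials $x_1^2x_2^2$, $x_1x_2^3$, $x_1^2x_2^2$, $0$, which do not sum to $\slide_{232}\slide_2$.

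Your proposal therefore has a genuine gap at exactly the point you yourself flag. You write that you ``must verify that each bump respects compatible sequences'' and that ``this last step is what upgrades the bijection \ldots\ to the polynomial identity'', but you never perform this verification, and the paper's closely related bump fails precisely there. Your description is also internally inconsistent: you say a decrement of a letter is ``matched by a decrement of its compatible-sequence entry'', which would change the monomial $x^s$, and then assert that the monomial is preserved. Finally, even the reduction of the left-hand side to ``shuffles equipped with a compatible sequence'' is not justified: a term of $\bSchub_\pi\bSchub_{s_i}$ is indexed by a triple $(w,s,j)$ with $s$ compatible for $w$ and an arbitrary $j\le i$, and there is no canonical position at which to insert $i$ into $w$ so that $j$ slots into $s$ as a compatible sequence for the resulting word. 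Monomial-level proofs of Monk's rule such as the pipe-dream insertion of~\cite{RC-graphs-and-schubert-polynomials} do real work to handle exactly this issue, and that work is absent from your sketch.
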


Fix $i$ and let $w = a_1 \dots a_n$ be a reduced word for $\pi$ and let $1 \le j \le n + 1$. We will present an algorithm (\autoref{alg:monk-shuffling}) for inserting $i$ into $w$ at position $j$ and rectifying the word to get a reduced word $w'$ for some $\pi \cdot t_{ab}$ with $a \le i < b$. We also give the inverse algorithm (\autoref{alg:monk-unshuffling}) for taking a reduced word $w'$ for some $\pi \cdot t_{ab}$ with $a \le i < b$ to a pair $w$ and $j$. So this algorithm gives a bijection between shuffles of $i$ into words for $\pi$ and words for $\pi \cdot t_{ab}$ with $a \le i < b$. 

\subsubsection{The algorithm and its inverse}
\begin{algorithm}
  \linespread{2}\selectfont
  \begin{algorithmic}[1]
    \item[]
    \Input $i$, a position $1 \le j \le n+1$, a word $w = a_1 \dotsm a_{n+1}$ such that $a_j = \infty$ and $a_1 \dotsm \widehat{a}_j \dotsm a_{n+1}$ is a reduced word for $\pi$.
    \Output a reduced word for $\pi t_{ab}$ where $a \le i < b$
    \While{$w$ is not reduced} \label{alg-line:monk-while}
      \State $j \gets \text{leftmost defect in $w$}$ \label{alg-line:monk-while-start}
      \State $w[j] \gets \max \setbuilder{k}{\text{$k < w[j]$ and $\wl{w}{j}{k} \le i < \wl{w}{j}{k+1}$}}$ \label{alg-line:monk-move-down} \label{alg-line:monk-while-end}
    \EndWhile
    \State \Return $w$
  \end{algorithmic}
  \caption{Shuffling $i$ into $w$ at position $j$.}
  \label{alg:monk-shuffling}
\end{algorithm}

\begin{algorithm}
  \linespread{2}\selectfont
  \begin{algorithmic}[1]
    \item[]
    \Input $i$, a reduced word $w = a_1 \dotsm a_{n+1}$ for $\pi t_{ab}$ where $a \le i < b$.
    \Output a word $b_1 \dotsm b_{n+1}$, such that there is a unique $j$ with $b_j = \infty$ and $b_1 \dotsm \widehat{b}_j \dotsm b_{n+1}$ is a reduced word for $\pi$.
    \State $j \gets k$ where $a_k$ is the cross $(a,b)$
    \While{$w[j] \neq \infty$}
      \State $w[j] \gets \min \setbuilder{k}{\text{$k > w[j]$ and $\wl{w}{j}{k+1} \le i < \wl{w}{j}{k}$}}$ \Comment{\makebox[5em][l]{$\min \emptyset = \infty$}} 
      \If{$w[j] \neq \infty$}
        \State $j \gets \text{rightmost defect of $w$}$
      \EndIf
    \EndWhile
    \State \Return $w$
  \end{algorithmic}
  \caption{Unshuffling $i$ from a word for $\pi t_{ab}$.}
  \label{alg:monk-unshuffling}
\end{algorithm}

In terms of wiring diagrams, the algorithm is easy to understand. If $c \le i < d$, a swap that moves the $c$-wire up and the $d$-wire down is \emph{allowed}.
We start with the given word $w$ and insert an empty column between the $j$th and $j+1$st letter, with a cross infinitely far up. We move this cross down until we reach an allowed swap. Now, either the word is reduced and we are done, or the two wires that it crossed cross again (at some position to the left). If they do cross again, we move this new cross down until we reach an allowed swap and keep going until the word is reduced. 

\begin{example}
  We will go through the insertion of $i = 3$ into the word $323432$ at position $5$ using \autoref{alg:monk-shuffling} in detail. The wiring diagrams for the steps are shown in \autoref{fig:monk-example-1}. We mark the letter we are about to decrease with \downmark{} and the letter we just decreased with \upmark{}.

We begin with the word $3234\monkdown{\infty}32$, and decrease the $5$th letter. The largest allowed letter is $4$ which swaps $2$ and $5$, (for $k \ge 5$, $k$ swaps $k$ and $k+1$, which is not allowed since $k > i$). When we replace the $\infty$ with $4$, we get a non-reduced word, the $2$-wire and the $5$-wire cross again at position $4$. So after the first run through the while loop on \origautoref{alg-line:monk-while}, the word is $323\monkdown{4}\monkup{4}32$.

Now we decrease the $4$ at position $4$. The largest allowed letter below it is $2$, which swaps $3$ and $4$ ($3$ swaps $4$ and $5$, which is not allowed since $4 > i$). When we replace the $4$ at position $4$ with $2$, we still do not have a reduced word, the $3$-wire and $4$-wire cross again at position $1$. So after this run through the while loop, the word is $\monkdown{3}23\monkup{2}432$.

Finally, we decrease the $3$ at position $1$. The largest allowed letter below it is $1$, which swaps $1$ and $5$ ($2$ swaps $5$ and $4$, which is not allowed, because $5 > i$). We replace the $3$ at position $1$ with $1$ and now the word is reduced. So our final word is $\monkup{1}232432$.
\end{example}

\begin{figure}
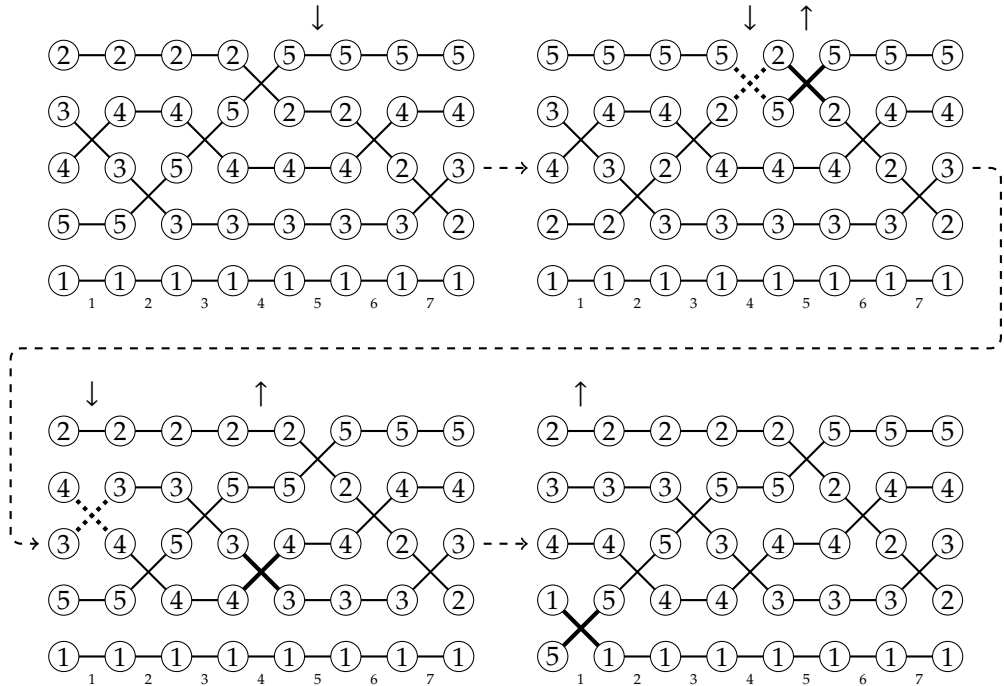

  \centering
  \includestandalone{monk-ex}
  \caption[The shuffling of $3$ into $323432$ at position $5$.]{The shuffling of $3$ into $323432$ at position $5$. At each step, the newly moved cross is bold, with an \upmark{} above it, and the next cross to move (if there is one) is dotted, with a \downmark{} above its position.}
  \label{fig:monk-example-1}
\end{figure}

\begin{example}\label{ex:monks-formula}
  In \autoref{ex:shuffling}, we saw that for $[21]$ and $[321]$, we have the following shuffles of their reduced words: $\leftword{1}121$, $1\leftword{1}21$, $12\leftword{1}1$, $121\leftword{1}$, $\leftword{1}212$, $2\leftword{1}12$, $21\leftword{1}2$ and $212\leftword{1}$. Replacing the $\leftword{1}$ by $\monkdown{\infty}$ and applying \autoref{alg:monk-shuffling}, we get:
\begin{align*}
\leftword{1}121 &\to 3121 & \leftword{1}212 &\to 3212 \\
1\leftword{1}21 &\to 1321 & 2\leftword{1}12 &\to 0212 \\
12\leftword{1}1 &\to 1021 & 21\leftword{1}2 &\to 1012 \\
121\leftword{1} &\to 1201 & 212\leftword{1} &\to 0121 \qedhere
\end{align*}
\end{example}

\subsubsection{Analysis}
To show the correctness of \autoref{alg:monk-shuffling}, we need to show that the algorithm is well defined and terminates with the correct result, and that it is a bijection.
\begin{lemma}\label{thm:monk-non-empty-set}
  The set on \origautoref{alg-line:monk-move-down} in \autoref{alg:monk-shuffling} is nonempty.
\end{lemma}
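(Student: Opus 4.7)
The plan is to reduce the statement to an invariant about the wiring diagram and then prove the invariant by induction on iterations of the while loop. The proposed invariant is: at each execution of line 3, $\wl{w}{j}{w[j]} > i$, interpreting the first iteration's $w[j] = \infty$ as satisfying this vacuously.

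Granting the invariant, nonemptiness follows by a counting argument in $S_\bZ$. Since $k \mapsto \wl{w}{j}{k}$ is a permutation of $\bZ$ equal to the identity outside a bounded range, $\wl{w}{j}{k} = k \le i$ for all sufficiently negative $k$, so the set $\{k < w[j] : \wl{w}{j}{k} \le i\}$ is nonempty and has a maximum $k^*$; then $\wl{w}{j}{k^* + 1} > i$ holds either because $k^* + 1 = w[j]$ (invariant) or because $k^* + 1 < w[j]$ (maximality of $k^*$), placing $k^*$ in the desired set.

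For the inductive step I track the ``active pair'' $(\alpha, \beta)$ of wires swapped by the most recent execution of line 3, so that $\alpha \le i < \beta$ and the cross at the just-modified position $j_{\mathrm{prev}}$ is a forward $(\alpha, \beta)$-cross. The position $j_{\mathrm{prev}}$ remains a defect of the updated word (removing its letter restores the reduced word supplied by the induction hypothesis), so \autoref{lemma:defect-lemma} provides exactly one other defect $j$. I need to show that $j < j_{\mathrm{prev}}$ and that the cross at $j$ is a backward $(\beta, \alpha)$-cross between the same active pair; this would give $\wl{w}{j}{w[j]} = \beta > i$, completing the induction. The main obstacle is precisely this structural claim: a single-letter modification can alter the crossing counts of several pairs of wires at once (examples show that many pairs may cross twice in the updated word while only one pair has both its crossings at defect positions), so the identification of the active pair with the second defect is not a purely local matter. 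Handling it requires showing that among all pairs crossing twice in the updated word, only the active pair has both of its crossings at positions whose removal yields a reduced word, with the greedy choice of the maximum allowed $k$ at each step of the algorithm being what pins the identity of that pair down.
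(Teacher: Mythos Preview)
Your overall strategy is the same as the paper's: establish the invariant $\wl{w}{j}{w[j]}>i$ at each execution of line~3, and then use that the column labeling is eventually the identity to find a transition from $\le i$ to $>i$ strictly below $w[j]$. The paper in fact asserts slightly more, namely $\wl{w}{j}{w[j]+1}\le i$ as well, but only your half of the statement is needed for nonemptiness, and the counting argument you give matches the paper's.

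Where you go astray is in the inductive step. You correctly observe that $j_{\mathrm{prev}}$ is a defect of the updated word (removing that letter gives the same word as removing it before the update, which was reduced since $j_{\mathrm{prev}}$ was chosen as a defect), and you correctly invoke \autoref{lemma:defect-lemma} to get a unique second defect $j$. But you then declare it a ``main obstacle'' to identify the pair of wires at $j$ with the active pair $(\alpha,\beta)$, worrying that many pairs might cross twice. This worry contradicts the very lemma you just used: the wiring-diagram content of \autoref{lemma:defect-lemma} is that \emph{exactly one} pair of wires crosses twice, and the two crossings of that pair are precisely the two defects. Since the cross at $j_{\mathrm{prev}}$ is between $\alpha$ and $\beta$, that unique doubly-crossing pair is $(\alpha,\beta)$, and the cross at $j$ is the other $(\alpha,\beta)$-crossing. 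No appeal to the greedy choice of $k$ is needed.

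Once this is clear, the remaining claims are immediate. To the right of position $j_{\mathrm{prev}}$ the $\alpha$-wire is below the $\beta$-wire (that is how $k$ was chosen, since $\wl{w}{j_{\mathrm{prev}}}{k}=\alpha$ and $\wl{w}{j_{\mathrm{prev}}}{k+1}=\beta$), so $j_{\mathrm{prev}}$ is the rightmost of the two crossings; hence $j<j_{\mathrm{prev}}$. Between the two crossings $\alpha$ is above $\beta$, so in column $j$ we have $\wl{w}{j}{w[j]}=\beta>i$ and $\wl{w}{j}{w[j]+1}=\alpha\le i$, giving your invariant. This is exactly what the paper asserts in one line (``we inserted a cross to the right of $j$, which made $\wl{w}{j}{w[j]+1}\le i$ and $\wl{w}{j}{w[j]}>i$''); your write-up would be complete after deleting the spurious obstacle and inserting this short argument.
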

\begin{proof}
  The labeling $\wl{w}{j}{l}$ lies in $S_\bZ$, so for all but finitely many $l$, $\wl{w}{j}{l} = l$. 
  The first time we reach \origautoref{alg-line:monk-move-down}, $w[j] = \infty$, so $k < w[j]$ imposes no restriction on $k$. 
  Thus, there is an $N > 0$ such that $\wl{w}{j}{-N} = -N \le i$ and $\wl{w}{j}{N} = N > i$. This means that there must be a $k$ with $-N < k < N$ such that $\wl{w}{j}{k} \le i$ and $\wl{w}{j}{k+1} > i$.

  Every subsequent time we reach \origautoref{alg-line:monk-move-down}, we reach it because we inserted a cross to the right of $j$, which made $\wl{w}{j}{w[j] + 1} \le i$ and $\wl{w}{j}{w[j]} > i$. Since $\wl{w}{j}{-N} = -N < i$ for large $N$, there is a $k$ with $-N < k < w[j]$ such that $\wl{w}{j}{k} \le i$ and $\wl{w}{j}{k+1} > i$. 
\end{proof}

\begin{theorem}
\autoref{alg:monk-shuffling} terminates, and the output is a word for $\pi t_{ab}$ for some $a \le i < b$.
\end{theorem}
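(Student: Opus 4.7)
My plan is to prove termination, then correctness of the output's form.

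For \textbf{termination}, I will establish two interlocking invariants: (i) each $w^{(t)}$ with $t \ge 1$ has at most one pair of wires crossing twice, and (ii) when this double crossing is present, its two crossings are at the most recently modified position and at the leftmost defect yet to be processed. These invariants imply that the sub-word of $w^{(t-1)}$ strictly to the right of the leftmost defect $j$ is reduced: the only doubled pair has its other crossing at $j$ or to the left of $j$, and every other pair crosses at most once in all of $w^{(t-1)}$. Now the new cross placed at $j$ by the algorithm is an $(a,b)$-cross with $a = \wl{w}{j}{k} \le i < \wl{w}{j}{k+1} = b$, hence $a < b$, and wires $a$ and $b$ are in natural order both just to the right of $j$ (at heights $k$ and $k{+}1$) and at the rightmost identity column (at heights $a$ and $b$). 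In the reduced right-hand sub-word they therefore cannot cross, so any new double crossing in $w^{(t)}$ must have its other crossing strictly to the left of $j$. Since positions are bounded below by $1$, this forces termination in finitely many iterations.

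For \textbf{correctness}, I will prove by induction on $t$ the invariant that deleting the most recently modified letter from $w^{(t)}$ yields a reduced word for $\pi$. The base case $t=1$ is immediate, since $w^{(1)}$ with the letter at $j_0$ removed is the original reduced word for $\pi$. For the inductive step, invariant (ii) identifies the two defects of $w^{(t-1)}$ as the previously modified position $j_{t-2}$ and the position $j_{t-1}$ being processed; \autoref{lemma:defect-lemma} together with the inductive hypothesis then ensures that removing either yields a reduced word for $\pi$. At termination, the reduced word $w^{(T)}$ has length $\len(\pi)+1$. Letting $u$ be the product of the letters of $w^{(T)}$ before position $j_{T-1}$ and $v$ the product of those after, the invariant gives $uv = \pi$, and the standard conjugation identity yields
\[
\prod w^{(T)} \;=\; u\, s_{k_{T-1}}\, v \;=\; \pi \cdot v^{-1} s_{k_{T-1}} v \;=\; \pi \cdot t_{v^{-1}(k_{T-1}),\, v^{-1}(k_{T-1}+1)}.
\]
Using the identity $\wl{w^{(T)}}{j_{T-1}}{l} = v^{-1}(l)$, the algorithm's selection rule at the final iteration forces $v^{-1}(k_{T-1}) \le i < v^{-1}(k_{T-1}+1)$, so the output represents $\pi \cdot t_{ab}$ with $a \le i < b$, as required.

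The main obstacle will be setting up the single-double-crossing invariant and exploiting it to establish reducedness of the right-hand sub-word; once these structural claims are in place, the rest is routine application of \autoref{lemma:defect-lemma} and the conjugation formula.
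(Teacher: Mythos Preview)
Your proposal is correct and takes the same route as the paper, only with considerably more detail: the paper's three-sentence proof simply asserts that ``the leftmost defect moves further to the left'' and that at termination ``we have a word for $\pi$, with a single letter added, which swaps an $a \le i$ with $b > i$,'' whereas you actually prove the first claim via the $(a,b)$-cross parity argument and the second via the invariant maintained through \autoref{lemma:defect-lemma} and the conjugation identity. One minor wording slip: in your reducedness argument for the sub-word strictly right of $j$, the doubled pair's second crossing sits at the previously modified position $j_{t-2}$, which is to the \emph{right} of $j$, not ``at $j$ or to the left''; but that still leaves exactly one crossing of this pair in the right sub-word, so the conclusion is unaffected.
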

\begin{proof}
  By \autoref{thm:monk-non-empty-set}, the set on line \origautoref{alg-line:monk-move-down} is nonempty. During each run through the while loop, the leftmost defect moves further to the left, so the algorithm terminates. 
  The algorithm terminates once $w$ is reduced. Then, we have a word for $\pi$, with a single letter added, which swaps an $a \le i$ with $b > i$, and this gives a word for $\pi t_{ab}$.
\end{proof}

\begin{lemma}\label{lemma:monk-shuffle-reversible}
  A single run through the while loop (Lines~\ref{alg-line:monk-while-start}--\ref{alg-line:monk-while-end}) in \autoref{alg:monk-shuffling} can be reversed. That is, we can unbump a bumped letter and find out who bumped it down.
\end{lemma}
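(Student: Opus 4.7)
The plan is to show that the reverse formula from Algorithm~\ref{alg:monk-unshuffling}, namely
\[
w_{\text{old}} = \min\set{k > w_{\text{new}} : \wl{w}{j}{k+1} \le i < \wl{w}{j}{k}}
\]
(with the convention $\min \emptyset = \infty$), correctly recovers the old value $w[j] = w_{\text{old}}$ from the new value $w_{\text{new}} = w'[j]$ after a single execution of the forward while-loop body.  The key preliminary observation is that $\wl{w}{j}{k} = \wl{w'}{j}{k}$ for every $k$, since these labels depend only on the letters to the right of position $j$, which are unaltered by the iteration; so the reverse formula can equally well be evaluated on $w'$.

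I would introduce the shorthand that a height $k$ is an \emph{up-transition} if $\wl{w}{j}{k} \le i < \wl{w}{j}{k+1}$ and a \emph{down-transition} if $\wl{w}{j}{k+1} \le i < \wl{w}{j}{k}$.  The forward rule chose $w_{\text{new}}$ to be the greatest up-transition below $w_{\text{old}}$, while the reverse rule picks the least down-transition above $w_{\text{new}}$, so the task is to show these agree.  Since $\wl{w}{j}{k} = k$ for $|k|$ sufficiently large, the maximality of $w_{\text{new}}$ rules out any up-transition in the open range $(w_{\text{new}}, w_{\text{old}})$ (with $w_{\text{old}} = \infty$ in the first pass).  Hence any putative down-transition at some $k^* \in (w_{\text{new}}, w_{\text{old}})$ would force a later up-transition to push the labels back above $i$, either by height $w_{\text{old}}$ or as heights tend to $+\infty$, contradicting this maximality.

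This already disposes of the case $w_{\text{old}} = \infty$: no down-transition above $w_{\text{new}}$ can exist, so the reverse formula correctly returns $\infty$.  In the remaining case where $w_{\text{old}}$ is finite, I still need to check that $w_{\text{old}}$ itself is a down-transition, and this is where I expect the main obstacle, as it requires tracing how the defect at $j$ came about.  That defect was produced by the previous iteration, which inserted an $(a,b)$-cross with $a \le i < b$ at some position $j' > j$.  Following the $a$- and $b$-wires leftward from $j'$ in their new (swapped) order, their next meeting is exactly the cross at $j$; by the wiring-diagram conventions of Section~1.1.2, this cross has label $b$ at the lower height $w_{\text{old}}$ and label $a$ at the upper height $w_{\text{old}}+1$ in column $j$.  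Thus $\wl{w}{j}{w_{\text{old}}} = b > i \ge a = \wl{w}{j}{w_{\text{old}}+1}$, so $w_{\text{old}}$ is indeed a down-transition, completing the proof.
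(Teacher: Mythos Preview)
Your proof is correct and follows essentially the same route as the paper's: both establish that every label $\wl{w}{j}{k}$ for $w_{\text{new}} < k \le w_{\text{old}}$ exceeds $i$, so the reverse formula returns $w_{\text{old}}$; and both verify in the finite case that the defect produced at $j$ by the previous $(a,b)$-insertion makes $\wl{w}{j}{w_{\text{old}}} = b > i \ge a = \wl{w}{j}{w_{\text{old}}+1}$. The only cosmetic difference is that the paper states the label inequality directly, while you phrase it via your up/down-transition language and defer the verification that $w_{\text{old}}$ is a down-transition to the end (which creates a harmless forward reference in your second paragraph).
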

\begin{proof}
  In order to reverse \origautoref{alg-line:monk-while-end}, we need to find the value $l_0$ of $w[j]$ before it was decreased to $l$.

Assume that $j$ was selected because the $l_0$ formed a defect with a previously inserted letter (rather than $j$ being the location of the initial insertion). Then, $\wl{w}{j}{l_0} > i$ and $\wl{w}{j}{l_0+1} \le i$. Since $l$ was the largest allowed swap less than $l_0$, we must have $\wl{w}{j}{k} > i$ for $l + 1 \le k \le l_0$, so $l_0 + 1$ is the smallest $k \ge l + 1$ with $\wl{w}{j}{k} \le i$.

If $j$ was the position of the initial insertion, there is no $k \ge l+1$ with $\wl{w}{j}{k} \le i$. If there was such a $k$, then, since for large $N$, $\wl{w}{j}{N} = N > i$, there would be an allowed swap above $l$.

So we replace $w[j]$ by $k_0 = \min \setbuilder{k > w[j]}{\text{$\wl{w}{j}{k} > i$ and $\wl{w}{j}{k+1} \le i$}}$ (where $\min\emptyset = \infty$). If $k_0 \neq \infty$, then the value of $j$ before \origautoref{alg-line:monk-while-start} is the position of the rightmost defect that appears when we set $w[j] = k_0$.
\end{proof}

To unshuffle $i$ from a word for $\pi t_{ab}$, where $a \le i < b$, we start by setting $j$ to be the position of the letter that swaps $a$ and $b$. Then we use \autoref{lemma:monk-shuffle-reversible} repeatedly until we have a word with $\infty$ in it. The details are in \autoref{alg:monk-unshuffling}.

Since the letters of $w$ only affect what happens to letters to their left, we get the following.
\begin{proposition}\label{prop:monk-shuffle-is-back-stable}
Given $i$ and a word $w = a_1 \dotsm a_n$ with $\infty$ at one position. If $w' = a_1' \dotsm a_n'$ is the result of applying \autoref{alg:monk-shuffling} to $i$ and $w$, then $a'_2 \dotsm a'_n$ is the result of applying the algorithm to $i$ and $a_2 \dots a_n$.
\end{proposition}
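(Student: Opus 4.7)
The plan is to run \autoref{alg:monk-shuffling} on $w$ and on $a_2 \dotsm a_n$ in parallel, and to show by induction that the two runs make exactly the same modifications at corresponding positions, possibly with extra modifications of $a_1$ interleaved in the $w$-run.

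Two facts about the wiring diagram are central. Fact (i): for $k \ge 1$ the labels $\wl{w}{k}{\ell}$ depend only on the letters $a_{k+1}, \dotsc, a_n$, since column $k$ is obtained from the identity column $n$ by successively applying $a_n, a_{n-1}, \dotsc, a_{k+1}$; hence column $k$ of $w$ has the same labels as column $k-1$ of $a_2 \dotsm a_n$, and the replacement value computed on \origautoref{alg-line:monk-move-down} at a position $j \ge 2$ of $w$ agrees with the one computed at position $j - 1$ of $a_2 \dotsm a_n$. Fact (ii): the pair of wires crossing at position $k + 1$ of $w$ is the same as the pair crossing at position $k$ of $a_2 \dotsm a_n$, so defects of $w$ whose two positions both lie in $\{2, \dotsc, n\}$ correspond bijectively to defects of $a_2 \dotsm a_n$; defects of $w$ with one position equal to $1$ have no analog in $a_2 \dotsm a_n$, and modifications of $a_1$ only change column $0$ and the crossing at position $1$, so they never affect any defect at positions $\ge 2$.

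Using these facts, I would prove the invariant: at every step $t$ of the $w$-run, the restriction of the intermediate word $w^{(t)}$ to positions $2, \dotsc, n$ equals the intermediate word of the $a_2 \dotsm a_n$-run after some number $t'$ of steps. For the inductive step, if the leftmost defect of $w^{(t)}$ lies at a position $j \ge 2$, then by Fact (ii) the leftmost defect of the $a_2 \dotsm a_n$-intermediate word is at position $j - 1$, and by Fact (i) both runs assign the same new value, so the invariant passes to step $t + 1$ with $t'$ incremented; if $j = 1$, only $a_1$ changes and the invariant passes with $t'$ unchanged. When the $w$-run terminates, $w^{(k)}$ is reduced, so in particular it has no defects with both positions $\ge 2$; by Fact (ii), the $a_2 \dotsm a_n$-run has therefore also terminated, and its output agrees with $a_2' \dotsm a_n'$.

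The technically delicate point is verifying, in the inductive step, that ``leftmost defect of $w^{(t)}$ at some $j \ge 2$'' really lifts to ``leftmost defect of the $a_2 \dotsm a_n$-intermediate word at position $j - 1$''. The bijective correspondence of defects from Fact (ii) handles this, because any smaller-position defect of $a_2 \dotsm a_n$ would lift to a defect of $w$ at some position in $\{2, \dotsc, j - 1\}$, contradicting the minimality of $j$.
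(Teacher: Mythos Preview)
Your argument is correct and is essentially a detailed unpacking of the paper's own one-sentence justification, ``Since the letters of $w$ only affect what happens to letters to their left, we get the following.'' Your Facts (i) and (ii) make that observation precise, and the parallel-run induction is the natural way to turn it into a proof.

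One small simplification: by \autoref{lemma:defect-lemma}, after each modification there are exactly two defects, one of which is the position just modified; since the new crossing has its smaller label below, the other defect is necessarily to the left. So ``leftmost defect'' is simply ``the unique other defect,'' and your final paragraph about lifting the leftmost defect is automatic rather than delicate. Relatedly, once the $w$-run reaches position $1$ it must terminate immediately (there is nothing further left), so the $j=1$ step can occur at most once and only as the final step; your invariant argument already accommodates this, but you could say so explicitly.
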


\begin{remark}
We note that this shuffle rule is not the multiplication rule for slide polynomials in \cite{Slide-paper}, nor the rule in \cite{RC-graphs-and-schubert-polynomials} since those rules both respect monomials. 
For example, $\slide_{232}\slide_{2} = x_1x_2^2(x_1 + x_2) = x_1^2x_2^2 + x_1x_2^3$, but the rectifications of the shuffles of $2$ and $232$ are $4232$, $2432$, $2132$ and $2312$, corresponding to the monomials $x_1^2x_2^2$, $x_1x_2^3$, $x_1^2x_2^2$ and $0$, respectively. 
\end{remark}

\subsection{Sottile's Pieri rule}\label{sec:pieri's-rule}
Sottile discovered a version of Pieri's rule for Schubert polynomials \cite{Sottile-Pieri-rule-paper} (the formula had previously been conjectured by Bergeron and Billey \cite[Section~6]{RC-graphs-and-schubert-polynomials}).

For $i$, $k \in \bZ$ with $k > 0$, let 
\begin{align*}
c[i, k] &= s_{i - k + 1} s_{i - k + 2} \dotsm s_{i-1} s_{i},\\
r[i, k] &= s_{i+k-1} s_{i + k - 2} \dotsm s_{i+1} s_{i}.
\end{align*}
For $\pi$, $\sigma \in S_n$, write $\pi \toc{i}{k} \sigma$ if there are integers $a_1$, $b_1$, \ldots, $a_k$, $b_k$ such that
\begin{enumerate}
\item $\sigma = \pi \cdot t_{a_1b_1} \dotsm t_{a_kb_k}$, \label{item:pieri-1}
\item $a_j \le i < b_j$ and $\len(\pi \cdot t_{a_1 b_1} \dotsm t_{a_j b_j}) = \len(\pi) + j$ for $1 \le j \le k$,  \label{item:pieri-2}
\item the integers $a_1$, \ldots, $a_k$ are distinct.
\end{enumerate}
Similarly, write $\pi \tor{i}{k} \sigma$ if there are integers $a_1$, $b_1$, \ldots, $a_k$, $b_k$ satisfying \ref{item:pieri-1} and \ref{item:pieri-2} as well as 
\begin{enumerate}[label={\arabic*'.}, start=3]
\item the integers $b_1$, \ldots, $b_k$ are distinct.
\end{enumerate}

\begin{theorem}[\protect{\cite[Theorem 1]{Sottile-Pieri-rule-paper}}]\label{thm:pieri-rule}
For $\pi \in S_\bZ$, $i$, $k \in \bZ$ with $k > 0$,
\begin{align}
\bSchub_\pi \bSchub_{c[i,k]} &= \sum_{\pi \toc{i}{k} \sigma} \bSchub_\sigma \label{eq:pieri-c},\\
\bSchub_\pi \bSchub_{r[i,k]} &= \sum_{\pi \tor{i}{k} \sigma} \bSchub_\sigma \label{eq:pieri-r}.
\end{align}
\end{theorem}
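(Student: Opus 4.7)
The plan is to extend the Monk shuffle bijection of \autoref{sec:monk's-rule} by running the rectification loop on all $k$ marked letters simultaneously. By \autoref{thm:back-stable-constants-are-regular-constants} it suffices to prove the identities for back-stable Schubert polynomials. Since $c[i,k]$ and $r[i,k]$ each have a unique reduced word, \autoref{thm:Nenashev-shuffling} reduces each identity to constructing a bijection between shuffles of a reduced word for $\pi$ with the unique reduced word for $c[i,k]$ (respectively $r[i,k]$) and $\bigsqcup_{\pi \toc{i}{k} \sigma} \RW(\sigma)$ (respectively $\bigsqcup_{\pi \tor{i}{k} \sigma} \RW(\sigma)$).

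For \eqref{eq:pieri-c}, given a shuffle, I replace each of its $k$ marked letters by $\infty$ and then iterate the inner loop of \autoref{alg:monk-shuffling} with parameter $i$: while the word is not reduced, I locate the leftmost defect $j$ and decrease $w[j]$ to the largest $h$ with $h < w[j]$ and $\wl{w}{j}{h} \le i < \wl{w}{j}{h+1}$. Termination follows as in \autoref{sec:monk's-rule}: the leftmost defect strictly moves to the left at each step, so the procedure halts after finitely many iterations. The resulting word is reduced and represents a permutation $\sigma$ with $\len(\sigma) = \len(\pi) + k$; reading off the wires moved up by the successive insertions yields the chain certifying $\pi \toc{i}{k} \sigma$. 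For \eqref{eq:pieri-r}, I apply the dual algorithm: replace each marked letter by $-\infty$ and in each step raise the leftmost defect to the smallest allowed height above its current position, yielding $\sigma$ with $\pi \tor{i}{k} \sigma$.

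The main obstacle is proving the distinctness conditions---that the $a_j$'s (for the $c$ case) or the $b_j$'s (for the $r$ case) produced by the $k$ insertions are all distinct---which is trivial in the $k=1$ setting of Monk's rule but requires a genuinely new argument for $k \ge 2$. I would establish this by maintaining an invariant throughout the algorithm: at every stage, each already-settled inserted cross corresponds to a distinct upward-moving wire (for $c$) or downward-moving wire (for $r$). The invariant should follow from the ascending (respectively descending) pattern of the original marked letters $i-k+1, \ldots, i$, which constrains the staircase structure of the $k$ crosses throughout the rectification. The inverse algorithm, a $k$-fold variant of \autoref{alg:monk-unshuffling}, identifies the chain realizing $\pi \toc{i}{k} \sigma$ (respectively $\pi \tor{i}{k} \sigma$) from the reduced word for $\sigma$ and unshuffles the insertions one at a time, mirroring \autoref{lemma:monk-shuffle-reversible}.
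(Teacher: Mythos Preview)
The paper does not prove \autoref{thm:pieri-rule}; it cites it from Sottile and then uses it, together with \autoref{thm:Nenashev-shuffling}, to motivate the construction of \autoref{alg:pieri-shuffling}. Your reduction runs the implication in the wrong direction and contains a genuine gap. Nenashev's identity reads $\binom{\len(\pi)+k}{k}\size{\RW(\pi)} = \sum_\sigma \bsc{\pi}{c[i,k]}{\sigma}\size{\RW(\sigma)}$, where the $\bsc{\pi}{c[i,k]}{\sigma}$ are the \emph{unknown} structure constants you are trying to compute. Exhibiting a bijection from shuffles to $\bigsqcup_{\pi \toc{i}{k} \sigma} \RW(\sigma)$ only yields the single scalar equality $\sum_\sigma \bsc{\pi}{c[i,k]}{\sigma}\size{\RW(\sigma)} = \sum_{\pi \toc{i}{k} \sigma}\size{\RW(\sigma)}$, which does not determine the individual coefficients (distinct permutations can share the same reduced-word count). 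To prove a product formula bijectively you would have to work at the level of compatible sequences so that the bijection is monomial-preserving, not merely at the level of reduced words; this is exactly why the pipe-dream bijections of \cite{RC-graphs-and-schubert-polynomials} and \cite{Kogan-Kumar} do prove these rules while a reduced-word bijection alone cannot.

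Even viewed only as a construction of the shuffle bijection, your sketch differs from \autoref{alg:pieri-shuffling}. The paper's algorithm does not simply iterate the Monk loop on all $k$ marks with the fixed threshold $i$: it maintains a dynamic set $B$ of ``big'' labels, adds $a$ to $B$ whenever an $(a,b)$-cross is inserted, removes labels when crosses are bumped, and only permits insertions with $a \notin B$ and $b \in B$. This bookkeeping is precisely what enforces distinctness of the $a_j$ (see \autoref{thm:pieri-correct-output}). Your proposed invariant based on ``the ascending pattern of the original marked letters $i-k+1,\dotsc,i$'' cannot substitute for it, since those letters are overwritten by $\infty$ at the outset and carry no information into the rectification loop.
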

We present an algorithm, \autoref{alg:pieri-shuffling}, and its inverse, \autoref{alg:pieri-unshuffling}, for computing the bijection for \autoref{eq:pieri-c}. For the bijection for \autoref{eq:pieri-r}, replace the set $B$ of ``big'' elements by a set $S$ of ``small'' elements in Algorithms \ref{alg:pieri-shuffling} and \ref{alg:pieri-unshuffling}.

\subsubsection{Some definitions}
In this section, we extend the definition of wiring diagrams (and correspondingly words), allowing two kinds of columns that do not contain a cross. 
First, we will allow columns without a cross in the wiring diagrams. In order to retain the one-to-one correspondence between words and wiring diagrams, we place the symbol $\markdown{\infty}$ at the positions without a cross. Second, we will allow columns with a deleted cross (remembering the height of the deleted cross). In the wiring diagram, we place a dot where the deleted cross was, and in the word, we place \downmark{} above the corresponding letter. The reason for the \downmark{} is that we later want to insert a cross below where the deleted cross was (in the first case at a height less than $\infty$).
In the labeling $\wl{w}{j}{k}$, we ignore the letters marked with \downmark{}.
We also make the following helpful definitions.
\begin{definition}
  Given a permutation $\pi$ and a word $Q = \sigma_n \dotsb \sigma_1$ containing $\pi$.
The \emph{rightmost subword} for $\pi$ is the subword $w_n$,
where $w_0$ is the empty word and 
\[
w_k = \begin{cases}
\sigma_k w_{k-1} & \text{if there is a word for $\pi$ ending with $\sigma_k w_{k-1}$,}\\
w_{k-1} & \text{otherwise.}
\end{cases}
\]
\end{definition}

\begin{lemma}
  The rightmost subword for $\pi$ is a word for $\pi$.
\end{lemma}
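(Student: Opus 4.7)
The plan is to prove by induction on $k$ the following stronger invariant: \emph{there is a subword $R_k$ of $\sigma_n \dotsb \sigma_{k+1}$ (with embedding into $Q$) such that $R_k w_k$, with the inherited embedding, is a subword of $Q$ that is a reduced word for $\pi$.} Taking $k = n$ forces $R_n$ to be empty, so $w_n$ itself is a reduced word for $\pi$, which is the lemma. The base case $k = 0$ is precisely the hypothesis that $Q$ contains $\pi$: some subword of $Q = \sigma_n \dotsb \sigma_1$ is a reduced word for $\pi$, and that subword plays the role of $R_0$ (with $w_0$ empty).

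For the inductive step, I would assume the invariant for $k-1$, producing a subword $R_{k-1}$ of $\sigma_n \dotsb \sigma_k$ with $R_{k-1} w_{k-1}$ a reduced word for $\pi$ in $Q$, and then split on which branch of the definition produces $w_k$. In the branch $w_k = \sigma_k w_{k-1}$, the definition itself supplies a subword of $Q$ that is a reduced word for $\pi$ and ends with $\sigma_k w_{k-1}$; its portion to the left of that terminal $\sigma_k$ is automatically a subword of $\sigma_n \dotsb \sigma_{k+1}$ and serves as $R_k$. In the other branch, $w_k = w_{k-1}$ and no subword of $Q$ is a reduced word for $\pi$ ending in $\sigma_k w_{k-1}$; I would use this to show that $R_{k-1}$ does not use the letter at position $k$ of $Q$, since otherwise I could write $R_{k-1} = R' \sigma_k$ and then $R' \sigma_k w_{k-1}$ would be exactly such a forbidden word, a contradiction. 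Thus $R_{k-1}$ is already a subword of $\sigma_n \dotsb \sigma_{k+1}$, and we take $R_k \defeq R_{k-1}$.

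The only real subtlety is that ``word for $\pi$'' in the definition must be read as ``subword of $Q$ that is a reduced word for $\pi$'' — under any weaker reading the definition fails to single out a subword of $Q$ at all. With that reading the induction goes through with no further ingredients, and the main step to execute carefully is the contradiction argument in the second branch, which is where the ``rightmost'' character of the construction is actually doing work.
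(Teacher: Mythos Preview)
Your proof is correct and follows essentially the same induction on $k$ as the paper's: the paper phrases the invariant as ``$\sigma_n \dotsb \sigma_{k+1}$ contains a word for $\pi \cdot (\prod w_k)^{-1}$'' while you track an explicit witness $R_k$ with $R_k w_k$ reduced for $\pi$, which amounts to the same thing. Your version is in fact more complete --- the paper only spells out the branch where $\sigma_k$ is added, whereas you also handle the other branch via the contradiction argument and you correctly flag the needed reading of ``word for $\pi$'' as ``subword of $Q$ that is a reduced word for $\pi$.''
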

\begin{proof}
  At the start, $Q$ contains a word for $\pi$. If $\sigma_k$ is added to the word, then by induction, $\sigma_n \dotsb \sigma_k$ contained a word for $\pi  \cdot (\prod w_{k-1})^{-1}$,  so $\sigma_n \dotsb \sigma_{k+1}$ contains a word for $\pi \cdot (\prod w_{k-1})^{-1} \sigma_k^{-1} = \pi \cdot (\prod w_k)^{-1}$. 
\end{proof}

\begin{definition} For every \downmark{} at a position $j$, remove the \upmark{} from the letter $i$ to its right with $\wl{w}{i}{w[i]} = \wl{w}{j}{w[j]+1}$. 
  After this, the \emph{unmarked subword} is the subword consisting of letters that are not marked by \downmark{} or \upmark{}. 
\end{definition}
The fact that each \downmark{} has a unique letter $i$ is shown in \autoref{lemma:pieri-upper-label-unique}.

\subsubsection{The algorithm and its inverse}

\begin{algorithm}
  \linespread{2}\selectfont
  \begin{algorithmic}[1]
    \item[]
    \Input $i$, positions $1 \le j_1 < \dotsb < j_k \le n+k$, a word $w = a_1 \dotsm a_{n+k}$ such that $a_{j_1} = \dotsb = a_{j_k} = \markdown{\infty}$ and $a_1 \dotsm \widehat{a}_{j_1} \dotsm \widehat{a}_{j_k} \dotsm a_{n+k}$ is a reduced word for $\pi$.
    \Output a reduced word for $\sigma$, with $\pi \toc{i}{k} \sigma$.
    \State $B \gets \{i+1, i+2, \dotsc\}$
    \While{$w$ contains \downmark{}}
      \State $j \gets \text{position of the rightmost \downmark{} in $w$}$\label{alg-line:pieri-j-definition}
      \If{$w[j] \neq \markdown{\infty}$}
        \State remove $\wl{w}{j}{w[j] + 1}$ from $B$ \label{alg-line:pieri-B-remove} 
        \State remove \upmark{} from the $w[i]$ such that $\wl{w}{i}{w[i]} = \wl{w}{j}{w[j] + 1}$ \label{alg-line:pieri-remove-up-arrow}
      \EndIf
      \State $k \gets \max \setbuilder{k}{\text{$k < w[j]$, $\wl{w}{j}{k} \not \in B$ and $\wl{w}{j}{k+1} \in B$}}$ \label{alg-line:pieri-decrease}
      \State $w[j] \gets \markup{k}$ \label{alg-line:pieri-k-up}
      \State add $\wl{w}{j}{k}$ to $B$\; \label{alg-line:pieri-B-add}
      \If{there is an $i < j$ with $\wl{w}{i}{w[i]} = \wl{w}{j}{k}$ and $\wl{w}{i}{w[i]+1} = \wl{w}{j}{k+1}$} \label{alg-line:pieri-bump-check}
        \State mark $w[i]$ with \downmark{} \label{alg-line:move-mark-line} 
      \EndIf
    \EndWhile
    \State \Return $w$
  \end{algorithmic}
  \caption{Shuffling $i$ into $w$ at positions $1 \le j_1 \le \dotsb \le j_k \le n+k$.}
  \label{alg:pieri-shuffling}
\end{algorithm}

\begin{algorithm}
  \linespread{2}\selectfont
  \begin{algorithmic}[1]
    \item[]
    \Input $i$, a reduced word $w = a_1\dotsm a_{n+k}$ for $\sigma$.
    \Output a word $w = b_1 \dotsm b_{n+k}$ such that $b_{j_1} = \dotsb = b_{j_k} = \markdown{\infty}$ for a sequence $1 \le j_1 < \dotsb < j_k \le n+k$ and $b_1 \dotsm \widehat{b}_{j_1} \dotsm \widehat{b}_{j_k} \dotsm b_{n+k}$ is a reduced word for $\pi$. 
    \State Mark the letters not in the rightmost subword of $w$ representing $\pi$ with \upmark{}
    \State $B \gets \setbuilder{\wl{w}{j}{w[j]}}{\text{$w[j]$ is marked with \upmark{}}} \cup \{i + 1, i + 2, \dotsc\}$  
    \While{$w$ contains \upmark{}}
      \State $j \gets \text{position of the leftmost \upmark{}}$
      \If{there is an $i < j$ with $\wl{w}{i}{w[i]} = \wl{w}{j}{w[j]+1}$ and $\wl{w}{i}{w[i] + 1} = \wl{w}{j}{w[j]}$}
        \State remove \downmark{} from $w[i]$
      \EndIf
      \State remove $\wl{w}{j}{w[j]}$ from $B$
      \State $k \gets \min \setbuilder{k}{\text{$k > w[j]$, $\wl{w}{j}{k} \in B$ and $\wl{w}{j}{k+1} \notin B$}}$ \Comment{\makebox[5em][l]{$\min \emptyset = \infty$}} \label{alg-line:pieri-unshuffle-increase}
      \State $w[j] \gets \markdown{k}$
      \If{$k \neq \infty$}
        \State add $\wl{w}{j}{k+1}$ to $B$
        \State add \upmark{} to the $w[i]$ forming a defect with $w[j]$ in the unmarked word \label{alg-line:pieri-unshuffling-defect}
      \EndIf
    \EndWhile
    \State \Return $w'$
  \end{algorithmic}
  \caption{Unshuffling $i-k+1$, \ldots, $i$ from a word $w$ for $\sigma$ with $\pi \toc{i}{k} \sigma$.}
  \label{alg:pieri-unshuffling}
\end{algorithm}
\autoref{alg:pieri-shuffling} is more complicated than \autoref{alg:monk-shuffling} because now we are inserting things at $k$ places and have to keep track of all the insertions and their interactions. 

We start with the word $w'$ of length $n+k$ formed by setting the letters at positions $j_1$, \ldots $j_k$ to $\markdown{\infty}$ and filling out the rest of the word with the letters of $w$ in order. We again rectify the word from right to left. Now, this can lead to multiple positions being in a bumped state at the same time, so we mark the letters in this state with \downmark{} and ignore them when we check if the other insertions bump something on \origautoref{alg-line:pieri-bump-check}. 
Because \autoref{eq:pieri-c} in \autoref{thm:pieri-rule} requires the $a_i$ to be distinct, we keep track of a set $B$ of ``big'' elements. We only add $(a,b)$-crosses where $a \notin B$, $b \in B$ and $a < b$ (the last condition is automatic and ensures that we do not create a defect to the right of the inserted cross). When we add an $(a,b)$-cross, we add $a$ to $B$ and when we bump an $(a,b)$-cross down, we remove $b$ from $B$ (by \autoref{lemma:pieri-upper-label-fixed}, it has previously been added to $B$).

So we go through the word from right to left until we come upon \downmark{} at some position $j$. If the $j$th letter is not $\markdown{\infty}$, we remove the label on the wire it moves down from $B$. We decrease the $j$th letter until we reach an allowed swap of $a$ and $b$, and add $a$ to $B$ and mark the $j$th letter with \upmark{}. If $a$ and $b$ cross again at position $i$, (ignoring crosses previously marked with \downmark{}), we mark the $i$th letter with \downmark{}. 
The \upmark{}s are not needed for the shuffling, but they helps us show that the unshuffling works. 

In the case of $k = 1$, it is relatively easy to see that the algorithms do the same thing: we insert $\markdown{\infty}$ and decrease it on \origautoref{alg-line:pieri-decrease} and unmark it and mark the place where we created a defect and keep going, in this case the set $B$ does nothing, we add the label on the wire we moved up and then remove it again before we do anything.

\begin{example}\label{ex:pieri-53-345-at-345}
  In \autoref{fig:pieri-53-345-at-345}, we shuffle $345$ into $53$ at positions $3$, $4$, and $5$, that is, we rectify $53\markdown{\infty}\markdown{\infty}\markdown{\infty}$. The crosses corresponding to letters marked with \downmark{} (from \origautoref{alg-line:move-mark-line} of \autoref{alg:pieri-shuffling}) are shown as dots and the crosses marked with \upmark{} are bold if nothing was marked with \downmark{} when they received their \upmark{}.
Thus, the unbolded crosses make up the unmarked word, which at every step is a word for $[124365]$.
\end{example}

\begin{figure}
  \centering
  \includestandalone[width=0.99\textwidth]{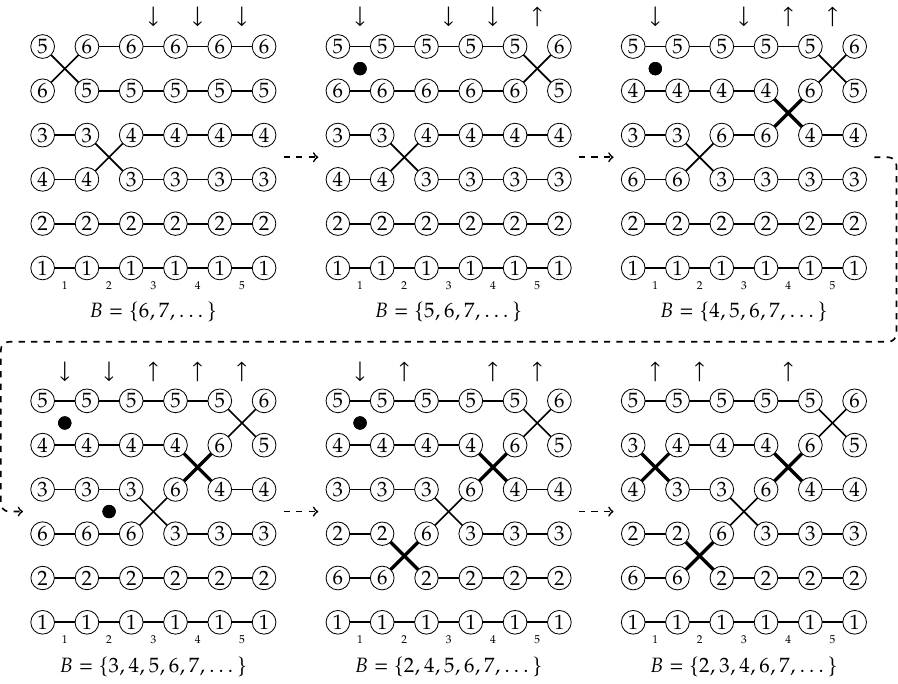}
  \caption{The shuffling of $345$ into $53$ at positions $3$, $4$, and $5$.
}
  \label{fig:pieri-53-345-at-345}
\end{figure}

To show that \autoref{alg:pieri-shuffling} works, we again need to show that it and \autoref{alg:pieri-unshuffling} are well defined, that it terminates with the correct answer, and that it is a bijection.

\subsubsection{Well-definedness}
We begin with well-definedness. In particular, we show that Lines \ref{alg-line:pieri-B-remove}--\ref{alg-line:pieri-decrease} are well defined. 
We show that if an insertion creates a defect, then until the defect is removed, its upper label is fixed and while its lower label can change, it stays in $B$.

\begin{lemma}\label{lemma:pieri-upper-label-fixed}
  The value that is removed from $B$ on \origautoref{alg-line:pieri-B-remove} in \autoref{alg:pieri-shuffling} is the same as was added on \origautoref{alg-line:pieri-B-add} when marking $w[j]$ with \downmark{}.
\end{lemma}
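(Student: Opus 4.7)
Plan.  My approach is to strengthen the lemma into an invariant and verify it by induction on iterations.  The stronger statement: at every moment during Algorithm~\ref{alg:pieri-shuffling}, for every position $j$ currently marked with \downmark{} by line~\ref{alg-line:move-mark-line}, the quantity $\wl{w}{j}{w[j]+1}$ equals the value added to $B$ on line~\ref{alg-line:pieri-B-add} during the iteration that applied the \downmark{} to $w[j]$.  Since line~\ref{alg-line:pieri-B-remove} reads precisely this quantity when $j$ is eventually processed (at the iteration $t_1$ where $j$ becomes the rightmost \downmark{}), the lemma is an immediate consequence.

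For the base case — the iteration $t_0$ at which $w[j]$ is \downmark{}-marked — the defect condition checked on line~\ref{alg-line:pieri-bump-check} together with the swap rule $\wl{w}{i-1}{w_i+1}=\wl{w}{i}{w_i}$ for adjacent columns lets me read off that $\wl{w}{j}{w[j]+1}$ immediately after the \downmark{} is placed is precisely $\wl{w}{p_{t_0}}{k}$, which is exactly the value added to $B$ on line~\ref{alg-line:pieri-B-add} of the same iteration.  For the inductive step I need to verify that every subsequent intermediate iteration — one processing a \downmark{} at some position $p'>j$ (which must hold for $t_0<t<t_1$ since $j$ is a \downmark{} but not the rightmost until $t_1$) — leaves $\wl{w}{j}{w[j]+1}$ untouched.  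The modification installs an $(a',b')$-cross at $p'$ with $a'\notin B$ and $b'\in B$ by the constraint in line~\ref{alg-line:pieri-decrease}; by the invariant, the wire at height $w[j]+1$ at column $j$ lies in $B$, so it cannot serve as the lower partner $a'$ of the new cross.

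The main obstacle is making the wire-tracking precise: specifically, ruling out the possibility that the new cross at $p'$ swaps the distinguished wire at height $w[j]+1$ of column $j$ with another wire.  This reduces to verifying that the maximality constraint in line~\ref{alg-line:pieri-decrease} — selecting the \emph{largest} admissible $k'$ — forces the inserted swap at $p'$ to occur strictly below the height at which the distinguished wire crosses column $p'$, so that the wire survives the swap and its path through the intermediate (unchanged) columns $p'-1, p'-2, \dots, j+1$ is preserved.  Once this is established, the invariant propagates from $t_0$ through to $t_1$, and evaluating it at $t_1$ gives the desired equality between the value removed on line~\ref{alg-line:pieri-B-remove} and the value added on line~\ref{alg-line:pieri-B-add} at $t_0$.
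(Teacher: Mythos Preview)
Your overall framework---recasting the lemma as an invariant on $\wl{w}{j}{w[j]+1}$ and inducting over the intermediate iterations---is sound and is essentially what the paper does, though the paper phrases it as ``no later insertion can touch the $a$-wire.'' Your base case and the easy half of the inductive step (ruling out $a=a'$ because $a\in B$ while $a'\notin B$) are fine.

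The gap is in the hard half of the inductive step: ruling out that the new insertion at $p'$ is a $(c,a)$-cross, i.e., that the distinguished wire $a$ sits at height $k'+1$ at column $p'$. You claim this follows from the \emph{maximality} of $k'$ on line~\ref{alg-line:pieri-decrease}, forcing the swap ``strictly below'' the height $h$ of the $a$-wire. But ``strictly below'' in the sense $k'<h$ is automatic (since $\wl{w}{p'}{k'}\notin B$ while $a\in B$ forces $k'\neq h$), and it does \emph{not} prevent $k'+1=h$: the swap at heights $k',k'+1$ would then move the $a$-wire down. Maximality only says $k'$ is the top admissible height below the old value $w[p']$; it imposes no relation between $k'+1$ and $h$, so the wire need not ``survive the swap'' for that reason.

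The paper's argument for this case is different and is the missing idea: to insert $(c,a)$ one needs $c$ immediately below $a$, but immediately after the $(a,b)$-insertion the $b$-wire is just below the $a$-wire; replacing $b$ by some $c$ there would itself be an insertion $(c,b)$, which puts $c$ into $B$ and so blocks the subsequent $(c,a)$. In other words, the wire just below $a$ is always in $B$, so $\wl{w}{p'}{h-1}\in B$ and hence $k'=h-1$ is never admissible. That is the mechanism you need in place of the appeal to maximality.
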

\begin{proof}
  If the insertion of $(a,b)$ added $a$ to $B$, then the algorithm cannot add a cross $(a,c)$, since $a \in B$. Further, if it was to add $(c, a)$, it would first need to add $(c,b)$, since the $b$-wire is just below the $a$-wire. But after adding $(c, b)$, $c \in B$ and hence $(c, a)$ cannot be added.  
\end{proof}
\begin{lemma}\label{lemma:pieri-upper-label-unique}
  On \origautoref{alg-line:pieri-remove-up-arrow} in \autoref{alg:pieri-shuffling}, 
  there is a unique $i > j$ with $w[i]$ marked with \upmark{} such that $\wl{w}{i}{w[i]} = \wl{w}{j}{w[j]+1}$.
\end{lemma}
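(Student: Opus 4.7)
I plan to prove this lemma by maintaining an invariant throughout the execution of \autoref{alg:pieri-shuffling}: at every point in the algorithm, the currently \downmark{}-marked and currently \upmark{}-marked positions pair up in a non-crossing fashion so that each \downmark{} at a position $j$ is matched with a unique \upmark{} at a position $i > j$, and these matched pairs satisfy exactly the label equality $\wl{w}{i}{w[i]} = \wl{w}{j}{w[j]+1}$ required by the lemma. The lemma then follows immediately by applying the invariant at the moment line~\ref{alg-line:pieri-remove-up-arrow} is reached.

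First, I would verify the invariant at the initial state: only the input \downmark{\infty} positions are marked, no \upmark{}s exist, and the guard $w[j] \neq \markdown{\infty}$ in line~\ref{alg-line:pieri-remove-up-arrow} fails for every \downmark{\infty}, so the invariant is vacuously true and agrees with the lemma.

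Next, I would show each pass of the while loop preserves the invariant. Consider a pass processing the rightmost \downmark{} at position $j$. If $w[j] \neq \markdown{\infty}$, the invariant already furnishes a unique \upmark{} partner $i > j$ with $\wl{w}{i}{w[i]} = \wl{w}{j}{w[j]+1}$; this is exactly the $i$ from which the algorithm removes \upmark{}, so that pair is correctly dissolved. Whether or not the if-branch fired, line~\ref{alg-line:pieri-k-up} then places a fresh \upmark{} at $j$. If line~\ref{alg-line:pieri-bump-check} triggers and marks some $i' < j$ with \downmark{}, the defect condition there directly gives $\wl{w}{i'}{w[i']+1} = \wl{w}{j}{k+1} = \wl{w}{j}{w[j]+1}$, which is precisely the label equality needed to pair the new \downmark{} at $i'$ with the new \upmark{} at $j$. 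Because any new pair is nested inside the pairs that remain, the non-crossing structure is preserved, which is what gives uniqueness in the invariant.

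The main obstacle I expect is bookkeeping the labels $\wl{w}{i}{l}$ as the wiring diagram changes between the moment a \downmark{} is placed at $j$ and the moment we return to process it. Between those events, the algorithm may process other \downmark{}s strictly between $j$ and its partner, which alter crosses in columns to the left of those intermediate positions. I would need to argue that even though those intermediate operations can alter the height $w[i]$ of the partner's cross and even reassign which \upmark{} is matched to $j$, the resulting labels at the matched partner still satisfy $\wl{w}{i}{w[i]} = \wl{w}{j}{w[j]+1}$. Making this precise essentially amounts to showing that the \upmark{}/\downmark{} pairs form a stack which evolves by push/pop operations analogous to Little bumps, and that each push and pop preserves the label condition—this is the technical core of the argument, while existence and uniqueness of $i$ in the lemma then follow as a direct corollary of the invariant.
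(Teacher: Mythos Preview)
Your plan takes a substantially different and more laborious route than the paper. The paper's proof is two sentences: existence follows from Lemma~\ref{lemma:pieri-upper-label-fixed} (the label $\wl{w}{j}{w[j]+1}$ is exactly the value that was added to $B$ when the \downmark{} at $j$ was created, so the \upmark{} placed in that same iteration witnesses it), and uniqueness follows because every currently \upmark{}-marked position $i$ has $\wl{w}{i}{w[i]}$ sitting in $B$ (it was added on line~\ref{alg-line:pieri-B-add}), and line~\ref{alg-line:pieri-decrease} forbids inserting a cross whose lower label is already in $B$, so these labels are pairwise distinct.

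You never invoke $B$, which is precisely the bookkeeping device that turns uniqueness into a one-liner; instead you reach for a non-crossing matching, which is more structure than needed and whose non-crossing property you do not justify. Two smaller issues with the sketch: your worry that ``intermediate operations can alter the height $w[i]$ of the partner's cross'' is unfounded, since once a position receives \upmark{} on line~\ref{alg-line:pieri-k-up} its letter is never rewritten; and the equality you check at creation time, $\wl{w}{i'}{w[i']+1}=\wl{w}{j}{w[j]+1}$, is not the one the lemma requires (with the \upmark{} at $j$ and the \downmark{} at $i'$, the lemma's condition reads $\wl{w}{j}{w[j]}=\wl{w}{i'}{w[i']+1}$). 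Most importantly, the ``technical core'' you isolate---that the relevant label survives the intervening insertions---is exactly the content of Lemma~\ref{lemma:pieri-upper-label-fixed}, which the paper has already established and simply cites here. Your plan would essentially reprove that lemma inside this one while missing the shortcut through $B$.
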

\begin{proof}
  By \autoref{lemma:pieri-upper-label-fixed}, the label $\wl{w}{j}{w[j]+1}$ remains fixed, so there is such an $i$. Since the label was previously added to $B$ on \origautoref{alg-line:pieri-B-add}, there cannot be more than one such $i$.
\end{proof}

\begin{lemma}\label{lemma:B-status-preserved}
  If $w[i]$ is marked with \downmark{} on \origautoref{alg-line:move-mark-line} in \autoref{alg:pieri-shuffling}, and $w'$ is the word (at the end of a run through the while loop) at some point before $j$ is set to $i$ on \origautoref{alg-line:pieri-j-definition}, then $\wl{w'}{i}{w'[i]} \in B$. 
\end{lemma}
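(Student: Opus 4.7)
The plan is to prove this by induction on the number of iterations of the while loop that have occurred after $w[i]$ was marked with \downmark{}. Because each iteration can simultaneously change both the labels in column $i$ (by altering the visible crosses to its right) and the set $B$, I will in fact prove the stronger statement that both the lower label $\wl{w}{i}{w[i]}$ and the upper label $\wl{w}{i}{w[i]+1}$ lie in $B$ at the end of every such iteration. The stronger hypothesis is needed because, via a swap, the lower label at $i$ can be replaced by what was the upper label there, so I need simultaneous control over both.

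For the base case, right after the iteration processing some $j_0 > i$ marks $w[i]$ with \downmark{}, the defining equalities $\wl{w}{i}{w[i]} = \wl{w}{j_0}{k}$ and $\wl{w}{i}{w[i]+1} = \wl{w}{j_0}{k+1}$, together with the conditions imposed on $k$ at the decrease step and the subsequent addition of $\wl{w}{j_0}{k}$ to $B$, immediately give that both labels lie in $B$.

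For the inductive step, I fix an iteration processing some $j' > i$ (such a $j'$ exists because we are before $j$ is set to $i$). Writing $a' = \wl{w}{j'}{k}$ and $b'' = \wl{w}{j'}{k+1}$ for the wires of the new cross at $j'$, and, if $w[j'] \neq \markdown{\infty}$, $b' = \wl{w}{j'}{w[j']+1}$ for the value removed from $B$, I analyze how the iteration changes the labels in column $i$. Only two things in this iteration can affect labels to the left of $j'$: (I) the cross at $j'$ becoming visible, which swaps the wires $a'$ and $b''$ everywhere to the left of $j'$; and (II) if a new \downmark{} is created at some position $i''$ with $i < i'' < j'$, the cross at $i''$ becoming invisible, which re-swaps $a'$ and $b''$ everywhere to the left of $i''$ and hence cancels (I) at column $i$. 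A case analysis on where the heights of $a'$ and $b''$ sit at column $i$ shows that either the labels at $i$ are unchanged or one of them changes from $b''$ to $a'$; in the latter case $a'$ has just been added to $B$, so the invariant is preserved. The inductive hypothesis together with $a' \notin B$ before the iteration rules out the possibility that a label at $i$ was equal to $a'$ at the start.

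The main obstacle is handling the ``unchanged'' case: I must show the label in question is not equal to $b'$, since $b'$ has just been removed from $B$ on line pieri-B-remove. This requires an auxiliary bookkeeping invariant tracking which elements of $B$ serve as the lower and upper labels of which \downmark{}-marked letters, so that the removal of $b'$ corresponding to $j'$ cannot also remove a label belonging to the \downmark{} at $i$. I expect to set this up by noting that each lower label of a \downmark{} is added to $B$ at the moment the \downmark{} is created and each upper label was itself the lower label of an earlier \downmark{} (with $\markdown{\infty}$ positions supplying fresh elements to $B$ without a removal). Once this bookkeeping is in place, the induction closes and $\wl{w}{i}{w[i]} \in B$ holds at the end of every intermediate iteration.
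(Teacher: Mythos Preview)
Your overall plan (induction on iterations) and your treatment of the case where a newly inserted cross changes a label at column $i$ match the paper's: since the lower wire $a'$ of the new cross is not in $B$ just before insertion while the label at $i$ is, any change must be to $a'$, which is then added to $B$. The paper tracks only the lower label, not both; your stronger two-label invariant is true but unnecessary, because the upper label at $i$ is in fact fixed throughout by \autoref{lemma:pieri-upper-label-fixed}.

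The gap is precisely where you locate it: the case where the label at $i$ is unchanged but some $b'$ is removed from $B$. You propose an auxiliary bookkeeping invariant but neither state it precisely nor prove it; ``I expect to set this up'' is not a proof. Worse, your informal description has the lower and upper labels interchanged. Once the cross at $j_0$ is inserted (and hence visible in $\wl{w}{\cdot}{\cdot}$, which ignores only \downmark{}-marked letters), the lower label $\wl{w}{i}{w[i]}$ equals the \emph{upper} wire $b$ of that cross and the upper label equals the freshly added $a$; the literal equalities on \origautoref{alg-line:pieri-bump-check} are misleading here. So ``each lower label of a \downmark{} is added to $B$ at the moment the \downmark{} is created'' is backwards, and the injection you sketch does not go through as written. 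The paper avoids any global bookkeeping: by \autoref{lemma:pieri-upper-label-fixed}, the removed $b'$ is exactly the value added when $w[j']$ was marked, hence the lower wire of some earlier insertion $(b',c)$; a short case analysis on whether $b'=b$ and on the relative timing of $(b',c)$ versus the insertion $(a,b)$ that marked $i$ then forces a contradiction. You should either formulate and prove your bookkeeping invariant with the labels the right way around, or follow this direct route through \autoref{lemma:pieri-upper-label-fixed}.
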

\begin{proof}
  Just after the run through the while loop that marks $w[i]$ with \downmark{}, it is true, since we inserted a cross $(a,b)$ with $b \in B$, and $\wl{w}{i}{w[i]} = b$.
  Suppose $b' = \wl{w'}{i}{w'[i]} \in B$ at the beginning of a run through the while loop. At the end of the while loop, there are two possible issues: the label $b'$ could have changed to $b''$  with $b''\notin B$, or $b'$ could be removed from $B$ (both cannot happen at the same time, since the $b'$-wire would be strictly above the inserted cross at position $j$). If the insertion changes the label to $b''$, it is by inserting the cross $(b'', b')$ and after the insertion, $b''$ is added to $B$, so the first issue does not arise. 

In order for \origautoref{alg-line:pieri-B-remove} to remove $b'$ from $B$, by \autoref{lemma:pieri-upper-label-fixed}, $(b', c)$ for some $c$ was previously inserted creating a defect with the cross at $j$. Before this insertion, $b' \notin B$ and afterwards, $b' \in B$. So before and after it, we did not insert any $(b', d)$ and hence did not change the label $\wl{w'}{i}{w'[i]}$, so there are two further cases. If $b' = b$, then the insertion $(b, c)$ must have happened before the insertion of $(a,b)$ (since by that point, $b \in B$) and then the insertion of $(a,b)$ would swap the $a$ and $b$ wires at $j$, since the defect it creates is at $i < j$. Otherwise, 
 the insertion $(b', c)$ also changed the label $\wl{w'}{i}{w'[i]}$ to $b'$, but this cannot happen since the insertion created a defect. 
So the second issue also does not arise. 
\end{proof}

\begin{lemma}
The set on \origautoref{alg-line:pieri-decrease} in \autoref{alg:pieri-shuffling} is non-empty.
\end{lemma}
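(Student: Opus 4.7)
The plan is a discrete intermediate-value argument on the heights at column $j$. I would first establish two asymptotic facts. Since $w$ lies in $S_\bZ$, the labeling $k \mapsto \wl{w}{j}{k}$ is a permutation of $\bZ$ fixing all but finitely many integers, so $\wl{w}{j}{k} = k$ for all $\abs{k}$ sufficiently large. And since $B$ starts as $\{i+1, i+2, \ldots\}$ and only finitely many elements have been added or removed by the prior iterations of the while loop, $B$ contains all sufficiently large integers and excludes all sufficiently negative integers. Combining, $\wl{w}{j}{k} = k \in B$ for $k \gg 0$ while $\wl{w}{j}{k} = k \notin B$ for $k \ll 0$.

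Hence as $k$ descends from $+\infty$, the property ``$\wl{w}{j}{k} \in B$'' must switch from true to false at some integer, giving a $k$ with $\wl{w}{j}{k+1} \in B$ and $\wl{w}{j}{k} \notin B$. It remains to verify that such a transition can be found with $k < w[j]$. If $w[j] = \markdown{\infty}$, the constraint is vacuous and any transition from Step~1 works. Otherwise $w[j] = h$ is finite, which means the \downmark{} at $j$ was placed on $w[j]$ by \origautoref{alg-line:move-mark-line} in a previous iteration, and at that prior iteration some value equal to $\wl{w}{j}{h}$ was added to $B$ on \origautoref{alg-line:pieri-B-add}. By \autoref{lemma:B-status-preserved} this label has remained at $\wl{w}{j}{h}$ and inside $B$ up to the current iteration; \origautoref{alg-line:pieri-B-remove} only removes $\wl{w}{j}{h+1}$, so when \origautoref{alg-line:pieri-decrease} runs we still have $\wl{w}{j}{h} \in B$. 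Scanning $k$ from $h - 1$ downward, $\wl{w}{j}{h} \in B$ but $\wl{w}{j}{k} \notin B$ for $k$ sufficiently negative, so a transition occurs at some $k$ with $k \le h - 1 < w[j]$, producing the desired element of the set.

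The main obstacle is the verification in the second case that $\wl{w}{j}{w[j]}$ is in $B$ at the exact instant \origautoref{alg-line:pieri-decrease} executes; this is precisely what \autoref{lemma:B-status-preserved} provides, together with the observation that \origautoref{alg-line:pieri-B-remove} only touches the upper label. Once this ``anchor'' is in place, the rest of the argument is routine asymptotic bookkeeping on $B$ and on the wire labeling.
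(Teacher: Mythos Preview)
Your proposal is correct and follows essentially the same route as the paper's proof: a case split on whether $w[j] = \markdown{\infty}$, and in the finite case invoking \autoref{lemma:B-status-preserved} to get $\wl{w}{j}{w[j]} \in B$ as an anchor for a downward intermediate-value argument. Your added remark that \origautoref{alg-line:pieri-B-remove} only removes $\wl{w}{j}{w[j]+1}$ (so the anchor survives to \origautoref{alg-line:pieri-decrease}) is a useful clarification the paper leaves implicit, but the overall argument is the same.
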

\begin{proof}
  If $w[j] = \markdown{\infty}$, then $k < w[j]$ imposes no restriction on $k$. Thus, since there is an $N > 0$ such that $N \in B$ and $-N \notin B$, there is a $k$ with $-N < k < N$ such that $\wl{w}{j}{k} \notin B$ and $\wl{w}{j}{k+1} \in B$.

  Otherwise, by \autoref{lemma:B-status-preserved}, $\wl{w}{j}{w[j]} \in B$, and since for large $N > 0$, $-N \notin B$, so there is a $k$ with $-N < k < w[j]$ such that $\wl{w}{j}{k} \notin B$ and $\wl{w}{j}{k+1} \in B$.
\end{proof}

Putting this together, we get that \autoref{alg:pieri-shuffling} is well-defined.
Further, since the unmarked word is not changing, we get the following, which shows that the unshuffling (\autoref{alg:pieri-unshuffling}) is also well-defined.
\begin{lemma}
  On \origautoref{alg-line:pieri-unshuffling-defect} in \autoref{alg:pieri-unshuffling} there is a unique $i$ that forms a defect with $w[j]$ in the unmarked word (ignoring the \downmark just added to $w[j]$).
\end{lemma}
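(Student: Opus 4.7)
The plan is to deduce this from the Defect Lemma (\autoref{lemma:defect-lemma}). Let $W$ denote the unmarked word at this moment in the algorithm, with $w[j]$ (at its new height $k$) treated as unmarked. I will show that $W$ is non-reduced and that deleting position $j$ from $W$ yields a reduced word. The Defect Lemma then produces a unique $i \neq j$ whose deletion also yields a reduced word, and this $i$ is precisely the other endpoint of the defect pair containing $w[j]$.

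The central ingredient is an invariant of \autoref{alg:pieri-unshuffling}: at the start of each iteration of the while loop, the unmarked word (formed after applying the pairing rule for every $\downarrow/\uparrow$ pair) is a reduced word for $\pi$. This holds initially because the setup marks precisely the letters outside the rightmost subword for $\pi$, leaving the unmarked word equal to that rightmost subword. To see persistence through an iteration, I would trace through lines 2, 5, and 6: line 2 removes a $\downarrow$ from some $w[i']$, which breaks $w[i']$'s pairing with an $\uparrow$-marked letter $w[i'']$ to its right (so $w[i'']$ returns to being truly $\uparrow$-marked and leaves the unmarked word); line 5 moves $w[j]$ out of the unmarked word; and line 6 re-pairs the fresh $\downarrow$ at $j$ with a new $\uparrow$. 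Because the pairing rule forces matching upper labels, the net change on the unmarked word at each iteration is a braid-type exchange preserving reducedness and the permutation $\pi$.

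Granting this invariant, consider $W$ just before line 6. Compared with the unmarked word at the start of the iteration, $W$ has gained $w[j]$ at height $k$ and, whenever line 2 fired, has also gained $w[i']$ while losing $w[i'']$. Removing the letter at $j$ from $W$ therefore recovers a word that, by the braid-exchange argument, is reduced; hence $j$ is a defect position of $W$. Non-reducedness of $W$ itself follows from the choice of $k$ on \origautoref{alg-line:pieri-unshuffle-increase}: the conditions $\wl{w}{j}{k} \in B$ and $\wl{w}{j}{k+1} \notin B$, combined with the unshuffling analogues of \autoref{lemma:pieri-upper-label-fixed} and \autoref{lemma:B-status-preserved} (which say that wire labels added to $B$ during the algorithm's run stay fixed until the corresponding $\downarrow$ is processed), force the two wires $\wl{w}{j}{k}$ and $\wl{w}{j}{k+1}$ to meet again strictly to the left of $j$ in $W$. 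The Defect Lemma then delivers the unique $i$.

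I expect the main obstacle to be the bookkeeping for the invariant, in particular verifying that the exchange at line 2 is label-consistent with a braid move on the unmarked word, and that the extremal choice of $k$ genuinely creates (rather than merely preserves) a second crossing of the two wires. Once these pieces are in place, the Defect Lemma closes the argument in one step.
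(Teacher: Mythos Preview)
Your overall strategy—reduce to the Defect Lemma via the invariant that the unmarked word stays a reduced word for $\pi$—is exactly the paper's. The paper's entire argument is the single sentence preceding the lemma (``since the unmarked word is not changing''), so on the uniqueness half you and the paper are aligned.

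On the existence half (that $W$ is genuinely non-reduced when $k \neq \infty$) your sketch has a concrete slip and a gap. The slip: the second crossing of the two wires lies to the \emph{right} of $j$, not the left. Line~\ref{alg-line:pieri-unshuffling-defect} reverses line~\ref{alg-line:pieri-remove-up-arrow} of \autoref{alg:pieri-shuffling}, which removed an \upmark{} from a position $i > j$; correspondingly, an unshuffling analogue of \autoref{lemma:pieri-a-le-b} would give $\wl{w}{j}{k} > \wl{w}{j}{k+1}$, so the cross you place at height $k$ has its larger-labeled wire below, forcing the repeat crossing to the right. The gap: the bare $B$-membership conditions you cite do not by themselves force that inequality or a second crossing—one needs the analogue of \autoref{lemma:pieri-a-le-b}, and then an argument that the repeat crossing actually lands in the unmarked word rather than at a marked position. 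The paper does not spell this out either; it leans implicitly on the step-by-step reversibility established afterward in the bijection theorem. So your acknowledged ``main obstacle'' is real, but your guess at where the repeat crossing sits is off by a reflection.
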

\subsubsection{Correctness}
Moving on to correctness, we show that the algorithm terminates and that the output is a word for $\sigma$ with $\pi \toc{i}{k} \sigma$. 

\begin{lemma}\label{lemma:pieri-terminates}
  \autoref{alg:pieri-shuffling} terminates.
\end{lemma}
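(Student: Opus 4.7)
The plan is to exhibit a strictly decreasing monovariant on the iterations of the while loop, so that the loop can only execute finitely many times.

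The natural candidate is the position of the rightmost \downmark{} in $w$. Concretely, for each state of $w$ during the algorithm, let $\mu(w)$ be the position of the rightmost letter marked with \downmark{}, with $\mu(w) = 0$ by convention when no such mark exists. I claim $\mu$ strictly decreases after each pass through the while loop (among iterations in which the loop continues). In a single iteration, $j$ is set to $\mu(w)$ on \origautoref{alg-line:pieri-j-definition}, and the \downmark{} at position $j$ is overwritten by \upmark{} on \origautoref{alg-line:pieri-k-up}. The only place where a new \downmark{} can be introduced is on \origautoref{alg-line:move-mark-line}, and there it is placed at some position $i < j$. Every other \downmark{} in the word was already present at the start of the iteration and therefore sits at a position strictly less than $j$ (by the choice of $j$ as the rightmost such position). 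Hence the new $\mu(w)$ is strictly less than $j$.

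Since the positions in $w$ form the finite set $\{1, 2, \dotsc, n+k\}$, the monovariant $\mu$ can decrease strictly only finitely many times. Therefore the while loop exits after at most $n+k$ iterations, so \autoref{alg:pieri-shuffling} terminates.

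The main thing to verify carefully in the write-up is that \origautoref{alg-line:move-mark-line} really is the only source of new \downmark{} marks (so that no mark can be created at a position $\ge j$ during the iteration), and that no \downmark{} elsewhere can survive at a position $\ge j$, both of which follow immediately from \origautoref{alg-line:pieri-j-definition} choosing the rightmost \downmark{}. The well-definedness of the steps inside the loop is not an obstacle here because it has already been established in the preceding lemmas.
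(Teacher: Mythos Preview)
Your proof is correct and is essentially the same as the paper's: both track the position of the rightmost \downmark{} and observe that it strictly decreases (or the mark disappears) with each pass through the while loop. Your version simply spells out the monovariant argument in more detail.
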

\begin{proof}
  After each run through the while loop, the rightmost \downmark{} has moved further to the left or disappeared.
\end{proof}

\begin{lemma}\label{lemma:unmarked-subword}
  The unmarked subword is the rightmost subword for $\pi$.
\end{lemma}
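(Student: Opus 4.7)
The plan is to prove this by induction on the number of iterations of the while loop in \autoref{alg:pieri-shuffling} that have been completed, with the invariant that, letting $w^\circ$ denote the word obtained from $w$ by deleting every letter marked \downmark{}, the unmarked subword $R$ coincides with the rightmost subword of $w^\circ$ representing $\pi$.

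For the base case, before any iteration runs, all $k$ placeholders $\markdown{\infty}$ carry a \downmark{}, so $w^\circ$ is exactly the original reduced word for $\pi$ and equals $R$; trivially, it is the rightmost (indeed only) subword of $w^\circ$ representing $\pi$.

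For the inductive step, I would fix a single iteration with rightmost \downmark{} at position $j$ and old value $v \defeq w[j]$. Tracing through the algorithm, $w^\circ$ changes by inserting a new letter of value $k$ at position $j$ and, if a defect is produced at $i' < j$, by deleting $w[i']$; simultaneously, $R$ changes by including $w[i]$ when $v \neq \markdown{\infty}$ (for the unique $i > j$ supplied by \autoref{lemma:pieri-upper-label-unique}) and by the same possible exclusion of $w[i']$. I would first verify that the updated $R$ still represents $\pi$: when $v \neq \markdown{\infty}$, \autoref{lemma:pieri-upper-label-fixed} says that the cross added at $i$ and the cross that used to sit at $j$ swap the same pair of wires, so trading one for the other in $R$ preserves the product; when a defect is created at $i'$, \autoref{lemma:defect-lemma} identifies $i'$ as the unique other defect of the augmented subword, so removing $w[i']$ restores a reduced word for $\pi$.

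It then remains to show that the updated $R$ is \emph{rightmost} in the updated $w^\circ$. Reading $w^\circ$ from right to left, the membership of every letter strictly to the right of $j$ is unchanged from the previous iteration's rightmost subword, and at $j$ itself the greedy right-to-left rule includes the new letter because $k$ was chosen in particular so that doing so preserves $\pi$-reducedness of the suffix. The main obstacle, as I see it, is the check at positions strictly to the left of $j$: one must show that the only membership change produced by the greedy rule is the removal of $w[i']$ and that no letter previously rejected becomes eligible. The forced removal follows directly from \autoref{lemma:defect-lemma}, but stability of the remaining choices requires careful bookkeeping with wire labels, for which I would lean on the invariants established in \autoref{lemma:pieri-upper-label-fixed} and \autoref{lemma:B-status-preserved}.
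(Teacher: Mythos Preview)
Your inductive framework and base case are fine, but the proposal has a genuine gap in the inductive step, and it stems from not fully exploiting the definition of ``unmarked subword.''

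First, your bookkeeping for $R$ is off. Before the iteration, the \downmark{} at $j$ already causes the \upmark{} at the position $i$ of \autoref{lemma:pieri-upper-label-unique} to be removed by the definition, so $w[i]$ is \emph{already} in $R$; it is not a new inclusion. More seriously, you never address whether the freshly created \upmark{} at $j$ is paired with the new \downmark{} at $i'$ under the definition, and hence whether the letter at $j$ enters $R$. Without that, your claimed change ``lose $w[i']$, gain $w[i]$'' leaves $R$ with one letter too few to be any word for $\pi$, let alone the rightmost one.

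Second, the part you flag as the main obstacle---re-establishing ``rightmost'' to the left of $j$---is exactly what the paper's definition is engineered to avoid. The pairing of each finite-valued \downmark{} with a specific \upmark{} in the definition of the unmarked subword is designed so that a single pass of the while loop leaves $R$ unchanged as a subword: in the no-defect case this is immediate, and in the defect case it follows from the definition together with \autoref{lemma:pieri-upper-label-unique}. Once you prove invariance of $R$ across iterations, ``rightmost'' is inherited from the base case and there is nothing to re-verify at positions left of $j$. Your plan to re-run the greedy right-to-left construction inside the updated $w^\circ$ and argue stability via \autoref{lemma:pieri-upper-label-fixed} and \autoref{lemma:B-status-preserved} is both harder and unnecessary; those lemmas control $B$ and wire labels, not the greedy choices of the rightmost-subword construction.

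In short: rather than tracking how $R$ changes and re-deriving rightmostness, show directly that $R$ does not change, using the \downmark{}/\upmark{} pairing built into the definition of the unmarked subword.
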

\begin{proof}
  At the start of the algorithm, it is true. 
During a run through the while loop, there are two cases. The algorithm decreases the $j$th letter, marked by \downmark{}. Either it does not form a defect with a letter in the unmarked subword, in which case it gets marked with \upmark{}, and unmarked subword remains the same. 
Or it forms a defect with a letter in the unmarked subword, in which case it is marked with \upmark{} and the other letter is marked with \downmark{}. By definition and \autoref{lemma:pieri-upper-label-unique}, this \upmark{} is not included in the unmarked subword, so it remains true.
\end{proof}

\begin{lemma}\label{lemma:pieri-a-le-b}
  Let $(a,b)$ with $a \notin B$ and $b \in B$ be the next insertion on \origautoref{alg-line:pieri-decrease}. Then $a < b$.
\end{lemma}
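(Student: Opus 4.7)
The plan is to prove the lemma as an immediate corollary of a stronger structural invariant of $B$: at every execution of \origautoref{alg-line:pieri-decrease}, the set $B$ is upward closed in $\bZ$, i.e., $B=\setbuilder{\ell\in\bZ}{\ell\ge m}$ for some integer $m$ depending on the current state.  Granting this invariant, the lemma is immediate: $a\notin B$ forces $a<m$ and $b\in B$ forces $b\ge m$, so $a<m\le b$.

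To establish the invariant, I would induct on the iteration counter of the while loop.  The base case is trivial, since $B$ is initialized to $\set{i+1,i+2,\dotsc}$, which is upward closed with $m=i+1$.  For the induction step, I would analyze the two updates made to $B$ between consecutive invocations of \origautoref{alg-line:pieri-decrease}: the addition of $a_{r-1}=\wl{w}{j_{r-1}}{k_{r-1}}$ on \origautoref{alg-line:pieri-B-add} at the end of iteration $r-1$ and, when the next iteration reprocesses an existing cross, the removal of $\wl{w}{j_r}{w[j_r]+1}$ on \origautoref{alg-line:pieri-B-remove} at the start of iteration $r$.

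The main obstacle is to show that additions and removals pair up in a stack-like, last-in-first-out fashion, so that each addition tacks an element onto $B$ just one below its current minimum ($a_{r-1}=m_{r-1}-1$) whenever upward closedness has to carry over into the next \origautoref{alg-line:pieri-decrease}, while each removal chips off the current minimum of $B$.  Two features of the algorithm should drive this analysis.  First, the while loop always picks the rightmost \downmark, and any \downmark{} spawned on \origautoref{alg-line:move-mark-line} lies strictly to the left of the position that created it, which enforces the LIFO ordering of events.  Second, \autoref{lemma:pieri-upper-label-fixed} says that the upper label of a bumped cross is fixed until the moment it is removed from $B$, while \autoref{lemma:unmarked-subword} guarantees that the unmarked subword remains a reduced word for $\pi$ throughout.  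Combining these with the maximality of $k$ chosen on \origautoref{alg-line:pieri-decrease}, and with \autoref{lemma:B-status-preserved} to ensure the labels needed for a removal are still in $B$ when required, I expect to match each subsequent removal with its preceding addition and thereby preserve the upward closed form of $B$ at every moment Line~\ref{alg-line:pieri-decrease} is entered.
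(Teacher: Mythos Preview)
Your proposed invariant—that $B$ is upward closed at every execution of \origautoref{alg-line:pieri-decrease}—is false, and this is a genuine gap.  Here is a concrete counterexample.  Take $i=2$ and $k=2$, let $\pi=s_1$ with reduced word $1$, and shuffle the two $\markdown{\infty}$'s into positions $1$ and $2$, so the starting word is $\markdown{\infty}\,\markdown{\infty}\,1$.  Initially $B=\{3,4,\dotsc\}$.  The first iteration processes $j=2$; the labeling at column $2$ (determined by the cross $s_1$ at position $3$) is $\wl{w}{2}{1}=2$, $\wl{w}{2}{2}=1$, $\wl{w}{2}{3}=3$.  The maximal valid $k$ is $k=2$, so we set $w[2]=\markup{2}$ and add $\wl{w}{2}{2}=1$ to $B$, giving $B=\{1,3,4,\dotsc\}$.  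No defect is created, so the next iteration processes the remaining $\markdown{\infty}$ at $j=1$ with no removal.  At that moment \origautoref{alg-line:pieri-decrease} is entered with $B=\{1,3,4,\dotsc\}$, which is not upward closed.  (The lemma itself still holds here: one computes $a=2$, $b=3$.)

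The paper's proof does not attempt any such global structural description of $B$.  Instead it uses only the weaker fact $\{i+1,i+2,\dotsc\}\subseteq B$ (which follows from \autoref{lemma:pieri-upper-label-fixed}: anything removed was previously added), so that $a\notin B$ forces $a\le i$.  When $b>i$ this already gives $a<b$.  When $b\le i$, the paper traces the chain of earlier insertions $(b,c_k),(c_k,c_{k-1}),\dotsc,(c_1,c_0)$ that put $b$ into $B$, with $c_0>i$, and then argues geometrically in the wiring diagram: reordering these insertions so that the cross at position $p_k$ is placed first makes it a $(b,c_0)$-cross, and any wire (in particular the $a$-wire) that ends up between the $b$- and $c_0$-wires there must have label either $<b$ or $>c_0>i\ge a$.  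You will need an argument of this local, wiring-diagram flavor; the global invariant you propose cannot be salvaged.
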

\begin{proof}
Since $a \notin B$, $a \le i$. Either $b > i$, in which case $a < b$, or there is a sequence of previous insertions $(b, c_k)$, $(c_k, c_{k-1})$, \ldots, $(c_1, c_0)$, at positions $p_k < \dotsb < p_0$, where, by induction $b < c_k < \dotsb < c_0$ and $c_0 > i$.
But by adding the cross at position $p_k$ first, it is an insertion $(b, c_0)$, and therefore everything that ends up between $b$ and $c_0$ (and in particular $a$) has to be either less than $b$ or greater than $c_0 > i$.
\end{proof}

\begin{theorem}\label{thm:pieri-correct-output}
  The algorithm ends with a word for a permutation $ \sigma$ with $\pi \toc{i}{k} \sigma$.
\end{theorem}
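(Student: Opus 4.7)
The plan is to extract the $k$ new crosses in the output and verify the three conditions defining $\pi \toc{i}{k} \sigma$. By \autoref{lemma:pieri-terminates}, the algorithm terminates, and by the loop condition there are no \downmark{} markers left. Over the course of the algorithm, the total number of marked letters (\upmark{} or \downmark{}) is preserved at $k$: each iteration of the while loop either converts a \downmark{} into an \upmark{} (when no defect is created) or also converts an existing \upmark{} back into a \downmark{} at a position further left (on \origautoref{alg-line:pieri-remove-up-arrow} combined with \origautoref{alg-line:move-mark-line}). So at termination the word has exactly $k$ \upmark{}'d crosses, giving $k$ transpositions $(a_l, b_l)$; define $\sigma$ to be the permutation represented by the full output word. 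By \autoref{lemma:unmarked-subword} the unmarked subword is always a reduced word for $\pi$, so $\sigma = \pi \cdot t_{a_1 b_1}\dotsm t_{a_k b_k}$ for the appropriate ordering obtained by reading the \upmark{}'d crosses.

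The ordering of the transpositions and the length condition $\len(\pi\cdot t_{a_1 b_1} \dotsm t_{a_j b_j}) = \len(\pi) + j$ can be verified by induction: each newly inserted \upmark{} cross is placed into a wiring diagram where the unmarked subword is already reduced and the earlier \upmark{} crosses (to its right) have been chosen so that no wire crosses twice. Since the insertion at the current position is chosen to avoid creating a defect in the current word (\origautoref{alg-line:pieri-bump-check} handles any that arises at a position further left in a subsequent iteration), the partial product's length grows by exactly one each time an \upmark{} becomes permanent. Combined with \autoref{lemma:pieri-a-le-b}, this gives $a_l < b_l$ and the correct Bruhat-covering structure for the sequence of transpositions.

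The conditions $a_l \le i < b_l$ and distinctness of the $a_l$'s follow from an invariant on $B$: throughout the algorithm, $B = \{i+1, i+2, \dotsc\} \cup A$, where $A$ is the set of $a$-values of all currently-\upmark{}'d crosses. This invariant is preserved because each \upmark{} insertion adds its $a$-value to $B$ (\origautoref{alg-line:pieri-B-add}), and each removal of an \upmark{} on \origautoref{alg-line:pieri-remove-up-arrow} removes exactly that $a$-value from $B$, by \autoref{lemma:pieri-upper-label-fixed}. Distinctness of the final $a_1, \dotsc, a_k$ is immediate from the set notation, and $a_l \le i$ follows because $a_l$ is chosen not in $B$ when inserted, hence is not in $\{i+1, i+2, \dotsc\}$.

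The main obstacle, and the heart of the argument, is showing $b_l > i$. Since $B$ can contain values $\le i$ (namely the $a$-values of other active \upmark{}'s), the definition $b_l \in B$ does not directly force $b_l > i$. I would strengthen the invariant to assert that whenever $a \in B$ with $a \le i$, the corresponding \upmark{}'d cross has $b$-value strictly greater than $i$, and then argue that the $b$-value of any newly inserted cross cannot be the $a$-value of some other active \upmark{}: such a coincidence would require two crosses to share a wire and positions in a way that, combined with \autoref{lemma:B-status-preserved}, produces a defect in the unmarked word, contradicting \autoref{lemma:unmarked-subword}. This step is delicate because it requires a careful analysis of how the wires labeled $\le i$ and $>i$ interleave through the $k$ inserted crosses. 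Once the strengthened invariant is proved, the theorem is immediate: the $k$ transpositions $t_{a_l b_l}$ satisfy $a_l \le i < b_l$, are distinct in their $a$-coordinates, and their successive products with $\pi$ yield $\sigma$ with the correct length, which is exactly the definition of $\pi \toc{i}{k} \sigma$.
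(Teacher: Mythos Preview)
Your overall framework matches the paper's: use \autoref{lemma:unmarked-subword} to extract the reduced word for $\pi$, read off the $k$ \upmark{}'d crosses $(c_j,d_j)$ from right to left, invoke \autoref{lemma:pieri-a-le-b} for $c_j<d_j$ and hence the length condition, and track $B$ to get distinctness of the $c_j$ together with $c_j\le i$.

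The gap is exactly where you suspect it is, but your proposed resolution does not work. The claim that ``the $b$-value of any newly inserted cross cannot be the $a$-value of some other active \upmark{}'' is false, and so is the strengthened invariant you propose. The proof of \autoref{lemma:pieri-a-le-b} already treats the case $b\le i$: there $b$ is precisely the $a$-value of an earlier insertion $(b,c)$, and nothing forces that earlier \upmark{} to have been removed by the time the algorithm terminates. Concretely, one can have a surviving cross $(a_1,b_1)$ with $b_1>i$ followed (to its left) by a surviving cross $(a_2,a_1)$ with $a_1\le i$; then $d_2=a_1\le i$, and for the element $a_2\in B$ the corresponding \upmark{} cross has $b$-value $a_1\le i$, violating your invariant.

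The paper does not attempt to prove $d_j>i$ for the raw output sequence. Instead it accepts that some $d_{j_0}\le i$ may occur, notes that then $d_{j_0}\in B_{j_0-1}=\{i+1,i+2,\dotsc\}\cup\{c_1,\dotsc,c_{j_0-1}\}$ forces $d_{j_0}=c_{j_1}$ for some $j_1<j_0$, and \emph{rewrites} the product of transpositions: commute $t_{c_{j_0}d_{j_0}}$ leftward past the intermediate factors (disjoint supports, since the $c_j$ are distinct and the intermediate $d_j>i$ by minimality of $j_0$), then use $t_{c_{j_1}d_{j_1}}t_{c_{j_0}d_{j_0}}=t_{c_{j_0}d_{j_1}}t_{c_{j_1}d_{j_1}}$ (valid because $c_{j_1}=d_{j_0}$). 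This replaces $d_{j_0}$ by $d_{j_1}>i$ while leaving all $c_j$ unchanged and preserving the length-increasing property. Iterating produces a witnessing sequence for $\pi\toc{i}{k}\sigma$. The idea you are missing is that the definition of $\toc{i}{k}$ only demands \emph{some} sequence of transpositions with the stated properties, not the particular one read off the algorithm's marks.
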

\begin{proof}
  By \autoref{lemma:unmarked-subword}, $w$ contains a subword for $\pi$. Let $(c_1, d_1)$, \ldots, $(c_k, d_k)$ be the crosses marked by \upmark{}, read from right to left. That is, $w$ is a word for $\pi t_{c_1d_1} \dotsb t_{c_kd_k}$. Let $B_0 = \{i+1, i+2, \dotsc\}$ and $B_j = B_{j-1} \cup \{c_j\}$. Since by \autoref{lemma:pieri-a-le-b} each $t_{c_jd_j}$ corresponds to the insertion of a cross with $c_j < d_j$, the length increases by one at each step. By \origautoref{alg-line:pieri-decrease}, $c_j \notin B_{j-1}$ and $d_j \in B_{j-1}$. In particular, the $c_j$ are distinct and $c_j \le i$ for all $j$.

If all $d_j > i$, then we are done. Otherwise, let $j_0$ be the smallest $j$ such that $d_{j_0} \le i$. Then, there is a $j_1 < j_0$ such that $c_{j_1} = d_{j_0}$. Since the $c_j$ are distinct, and since $d_j > i$ for $j_1 < j < j_0$, $t_{c_jd_j}$ and $t_{c_{j_0}d_{j_0}}$ commute, and $t_{c_{j_1}d_{j_1}}t_{c_{j_0}d_{j_0}} = t_{c_{j_0}d_{j_1}}t_{c_{j_1}d_{j_1}}$, since $c_{j_1} = d_{j_0}$.
This is the sequence of labels on crosses we get by adding the $j_0$th cross to the word before we add the $j_1$th, so the length still increases by one at each step. The $c_j$ have not changed, and $d_{j_0}$ was replaced by $d_{j_1} > i$. So by repeating this, we see that $w$ is a word for $\sigma$ with $\pi \toc{i}{k} \sigma$. 
\end{proof}

\subsubsection{Bijection}
Finally, we show that \autoref{alg:pieri-shuffling} computes a bijection.
\begin{theorem}
  \autoref{alg:pieri-shuffling} computes a bijection.
\end{theorem}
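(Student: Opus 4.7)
My plan is to show that \autoref{alg:pieri-unshuffling} is the two-sided inverse of \autoref{alg:pieri-shuffling}. Together with \autoref{thm:pieri-correct-output} and the already-established well-definedness of both algorithms, this establishes the bijection.

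The first observation is that in \autoref{alg:pieri-shuffling}, the positions that host the successive \upmark{} placements form a strictly decreasing sequence: each iteration picks the rightmost \downmark{}, and any newly created \downmark{} sits strictly to the left of the current $j$. Hence the \upmark{} placed last is the \emph{leftmost} \upmark{} of the output word, which is exactly the \upmark{} processed first by \autoref{alg:pieri-unshuffling}. Thus the processing orders of the two algorithms are mirror images of each other.

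Next, I would show that one iteration of \autoref{alg:pieri-unshuffling} exactly reverses one iteration of \autoref{alg:pieri-shuffling}, treating the two algorithms as dual bookkeeping of the set $B$, of the \upmark{} paired to the right by \autoref{lemma:pieri-upper-label-unique}, and of the possibly-created \downmark{} to the left. The core computation --- analogous to \autoref{lemma:monk-shuffle-reversible} --- is that the ``largest allowed $k$ less than $w[j]$'' used to decrease $w[j]$ in shuffling pairs with the ``smallest allowed $k$ greater than $w[j]$'' used to increase $w[j]$ in unshuffling, and these recover the original value of $w[j]$ before the step, because no intermediate wire label at position $j$ could have been in $B$ (otherwise the shuffling iteration would have stopped earlier). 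The symmetric updates to $B$ on the two sides (adding $\wl{w}{j}{k}$ when decreasing, removing it when increasing) then keep the auxiliary data in sync.

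Finally, I would check that \autoref{alg:pieri-unshuffling} is well-defined on every reduced word $w$ for $\sigma$ with $\pi \toc{i}{k} \sigma$. The initialization marks exactly $k$ letters with \upmark{} (those not in the rightmost subword for $\pi$), and these correspond to the transpositions $t_{a_1b_1}, \ldots, t_{a_kb_k}$ in the definition of $\pi \toc{i}{k} \sigma$, supplying $B$ with the correct initial data. The main obstacle will be the bookkeeping around simultaneous defects: when a chain of \downmark{} states cascades during shuffling, the \upmark{}s must unwind in the opposite order during unshuffling, so that every intermediate invariant of Lemmas~\ref{lemma:pieri-upper-label-fixed}, \ref{lemma:pieri-upper-label-unique}, and~\ref{lemma:B-status-preserved} holds in reverse. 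Once this is verified, induction on the number of while-loop iterations shows that shuffling and unshuffling are two-sided inverses, yielding the claimed bijection.
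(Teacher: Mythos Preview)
Your approach is essentially the paper's: show that each while-loop iteration of \autoref{alg:pieri-unshuffling} undoes one iteration of \autoref{alg:pieri-shuffling}, with the only nontrivial step being that \origautoref{alg-line:pieri-unshuffle-increase} inverts \origautoref{alg-line:pieri-decrease}, and that the terminal marking of shuffling (by \autoref{lemma:unmarked-subword}, the \upmark{}s sit exactly on the complement of the rightmost subword for $\pi$) matches the initialization of unshuffling. The paper's proof is a compressed version of what you outline.

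One concrete slip: you write that ``no intermediate wire label at position $j$ could have been in $B$ (otherwise the shuffling iteration would have stopped earlier).'' The correct statement is the opposite. Since $\wl{w}{j}{w[j]} \in B$ before the decrease (\autoref{lemma:B-status-preserved}) and $k$ is the \emph{largest} index below $w[j]$ with $\wl{w}{j}{k}\notin B$ and $\wl{w}{j}{k+1}\in B$, every label $\wl{w}{j}{l}$ for $k < l \le w[j]$ lies \emph{in} $B$; were some such label not in $B$, the max would be attained at a larger index. It is precisely this fact---all intermediate labels are in $B$, and $\wl{w}{j}{w[j]+1}$ was removed from $B$---that forces the ``smallest $k' > w[j]$ with $\wl{w}{j}{k'}\in B$ and $\wl{w}{j}{k'+1}\notin B$'' in unshuffling to equal the old value of $w[j]$. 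Fix this sign and your argument goes through.
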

\begin{proof}
  The only line in \autoref{alg:pieri-shuffling} that is not clearly reversible and reversed by \autoref{alg:pieri-unshuffling} is \origautoref{alg-line:pieri-decrease}. But that line is just saying that all the integers between $k$ and $w[j]$ lie in $B$, so it is reversed by \origautoref{alg-line:pieri-unshuffle-increase}. Since, the algorithm ends with the letters not in the rightmost subword marked with \upmark{}, we get that it computes a bijection. 
\end{proof}

Just like in \autoref{alg:monk-shuffling}, letters only affect what happens to their left, 
so we get the following (cf. \autoref{prop:monk-shuffle-is-back-stable}).
\begin{proposition}\label{prop:pieri-shuffle-is-back-stable}
Given $i$ and a word $w = a_1 \dotsm a_n$ with $\markdown{\infty}$ at some of its positions. If $w' = a_1' \dotsm a_n'$ is the result of applying \autoref{alg:monk-shuffling} to $i$ and $w$, then $a'_2 \dotsm a'_n$ is the result of applying the algorithm to $i$ and $a_2 \dots a_n$.
\end{proposition}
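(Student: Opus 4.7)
The plan is to prove this by induction on the iteration count of the while loop in \autoref{alg:pieri-shuffling}, establishing that the \emph{state} (letters, \downmark{}/\upmark{} marks, and the set $B$) of the full execution, when restricted to positions $\{2, \dotsc, n\}$, matches the state of the truncated execution at positions $\{1, \dotsc, n-1\}$ under the shift $j \mapsto j - 1$.

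The key structural observation is that the wiring-diagram label $\wl{w}{j}{k}$ at a position $j \geq 2$ depends only on the letters of $w$ at positions strictly greater than $j$. Since the full input $a_1 \dotsm a_n$ and the truncated input $a_2 \dotsm a_n$ agree on positions $\geq 2$, these labels are identical in both runs. Thus, whenever the rightmost \downmark{} lies at some $j \geq 2$, the value of $k$ chosen on \origautoref{alg-line:pieri-decrease}, the element $\wl{w}{j}{k}$ added to $B$ on \origautoref{alg-line:pieri-B-add}, the element $\wl{w}{j}{w[j]+1}$ removed from $B$ on \origautoref{alg-line:pieri-B-remove}, and the letter rewritten on \origautoref{alg-line:pieri-k-up} all agree in the two runs, assuming the previous states agreed.

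The one place where divergence is possible is \origautoref{alg-line:pieri-bump-check}, which searches for a defect position $i < j$. By \autoref{lemma:defect-lemma} there is at most one such $i$, and its location is determined by where the two swapped wires cross again while tracing left from $j$. If that second crossing occurs at some $i \geq 2$, both runs discover it and mark $w[i]$ with \downmark{} identically. If it occurs at $i = 1$ in the full run, the truncated run simply finds no such $i$ (position $1$ does not exist), so the full run places an extra \downmark{} at position $1$. Analogously, if the input has $a_1 = \markdown{\infty}$, this \downmark{} is present at position $1$ from the start in the full run but not in the truncated one.

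The crucial point that makes the induction close is that \origautoref{alg-line:pieri-j-definition} always selects the \emph{rightmost} \downmark{}. Any \downmark{} at position $1$ in the full run is therefore processed only after every \downmark{} at a position $\geq 2$ has been cleared. When it is eventually processed, it can only rewrite $w[1]$, modify $B$, and strip an \upmark{} from some letter $w[i']$ with $i' > 1$ via \origautoref{alg-line:pieri-remove-up-arrow}; none of these touch the \emph{letters} at positions $\geq 2$, and because the while loop terminates immediately afterward (no remaining \downmark{}s), they cannot influence any future iteration either. The main obstacle is precisely this careful bookkeeping: one must verify that the \upmark{} removal and $B$-update forced by processing the position-$1$ \downmark{} never feed back into the processing at positions $\geq 2$, which is guaranteed by the rightmost-first order. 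With that in hand, the induction yields $a'_2 \dotsm a'_n$ equal to the output of \autoref{alg:pieri-shuffling} on $a_2 \dotsm a_n$.
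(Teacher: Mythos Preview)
Your proposal is correct and follows the same idea as the paper: the paper's own justification is the single sentence ``letters only affect what happens to their left,'' stated just before the proposition, and your induction on iterations of the while loop is precisely a careful unpacking of that observation. The only substantive content beyond the paper is your explicit verification that the rightmost-first processing order guarantees any \downmark{} at position $1$ is handled last and therefore cannot feed back into positions $\geq 2$, which is exactly what is needed to make the one-liner rigorous.
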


\section{Slides are not a Gr\"obner degeneration of Schuberts}
\label{sec:slides-not-degen-of-schuberts}
As we saw in \autoref{sec:geometry}, when $\succ$ is an antidiagonal term order, it degenerates $\mSchub{\pi}$ to the pipe dream complex: $\init_\succ I_\pi$ has primary decomposition 
\[
\init_\succ I_\pi = \bigcap_{P \in \PD(\pi)}\idealbuilder{z_{ij}}{\text{$(i,j)$ is a cross in $P$}},
\]
so $\mSchub{\pi}$ splits into a union of coordinate subspaces, one for each pipe dream for $\pi$.
An antidiagonal weight order $\succ_w$ defines a partial degeneration of $\mSchub{\pi}$: $\init_{\succ_w} I_\pi$ has primary decomposition  
\[
\init_{\succ_w} I_\pi = J_1 \cap \dotsb \cap J_k,
\]
so $\mSchub{\pi}$ splits into components corresponding to these ideals. By picking a finer weight order, each $J_i$ can be further degenerated to a a monomial ideal. This way, $w$ gives a partition of the pipe dream complex into $k$ pieces. 
In general, it is unknown what these resulting varieties and splittings are \cite[Remark 1.8.6]{Grobner-geometry-paper}. We want to see if they include the decomposition of the the subword complex into slide complexes.

\subsection{Failure for $[1432]$}
The simplest example where this fails is $[1432]$. There are two words for $[1432]$, $232$ and $323$, so there are two slides.
\begin{figure}\centering
\includestandalone{rothe-diagram-1432}
\caption[The Rothe diagram for the permutation $\sqbl1432\sqbr$.]{The Rothe diagram for the permutation $[1432]$. The Fulton essential set is marked in gray.}
\label{fig:rothe-diagram-1432}
\end{figure}
The Rothe diagram for $[1432]$ in \autoref{fig:rothe-diagram-1432} shows that there are two rank conditions: the north west $2\times 3$ rectangle and the north west $3 \times 2$ rectangle both have rank at most $1$. So the ideal $I$ is 
\[  
\langle 
z_{12}z_{21} - z_{11}z_{22}, 
z_{13}z_{21} - z_{11}z_{23}, 
z_{13}z_{22} - z_{12}z_{23}, 
z_{12}z_{31} - z_{11}z_{32}, 
z_{22}z_{31} - z_{21}z_{32} 
\rangle.
\]
Fully Gr\"obner degenerating $I$ with an antidiagonal gives us 
\[
\langle 
z_{12}z_{21}, 
z_{13}z_{21},
z_{13}z_{22},
z_{12}z_{31},
z_{22}z_{31}
\rangle, 
\]
which has the primary decomposition 
\[
\overbracket{\langle z_{12}, z_{13}, z_{22} \rangle
\cap 
\langle z_{12}, z_{13}, z_{31} \rangle
\cap
\langle z_{13}, z_{21}, z_{31} \rangle
\cap
\langle z_{21}, z_{22}, z_{31} \rangle}^{323}
\cap
\overbracket{\langle z_{12}, z_{21}, z_{22} \rangle}^{232}
.
\]
 The pipe dream complex was shown in \autoref{fig:subword-complex-1432} in \autoref{sec:slide}.

A weight $w = (w_{11}, w_{12}, w_{13}, w_{21}, w_{22}, w_{23}, w_{31}, w_{32})$ defines an antidiagonal Gr\"obner degeneration of $I$ if and only if
\begin{align*}
w_{12} + w_{21} &\ge w_{11} + w_{22}, \\ 
w_{13} + w_{21} &\ge w_{11} + w_{23},\\
w_{13} + w_{22} &\ge w_{12} + w_{23}, \\ 
w_{12} + w_{31} &\ge w_{11} + w_{32},\\
w_{22} + w_{31} &\ge w_{21} + w_{32}. 
\end{align*}
These equations define a three dimensional cone crossed with a seven dimensional lineality space: 
\[
P = a_1 R_1 + a_2 R_2 + a_3 R_3 + L,
\]
where $a_1$, $a_2$, $a_3 \ge 0$,  
\[
\RPI = \parenMatrixstack{ 
0 & 0 & 0 \\
0 & 0 & -1\\
 0 & 0 & 0
},\quad
\RPII = \parenMatrixstack{
-1 & 0 & 0\\
 0 & 0 & 0\\
 0 & 0 & 0
},\quad
\RPIII = \parenMatrixstack{
 0 & 0 & 0\\
 0 & 0 & 0\\
 0 & -1 & 0
},
\]
and $L$ is a matrix of the form
\[
\parenMatrixstack{
a+e & a+f & a+g\\
b+e & b+f & b+g\\
c+e & c+f & d
}.
\]
We will go through the process of degenerating using the ray $\RPIII$, and report all the splittings in \autoref{tab:grobner-degens-1432}.

Gr\"obner degenerating $I$ with $w$ on $\RPIII$ does nothing to the first three generators of $I$, and takes the first term of the remaining two generators. So 
\[
\init_w I = \langle z_{12}z_{21} - z_{11}z_{22},
z_{13}z_{21} - z_{11}z_{23},
z_{13}z_{22} - z_{12}z_{23},
z_{12}z_{31},
z_{22}z_{31} \rangle,
\]
which has the primary decomposition
\[
\overbracket{
\langle 
z_{22}, 
z_{12}, 
z_{13}z_{21} - z_{11}z_{23} 
\rangle 
}^{I_1}
\cap 
\overbracket{
\langle 
z_{31}, 
z_{13}z_{22} - z_{12}z_{23},
z_{13}z_{21} - z_{11}z_{23},
z_{12}z_{21} - z_{11}z_{22}
\rangle}^{I_2}
.
\]
We fully degenerate both ideals
\begin{align*}
\init I_1 &= \langle z_{22}, z_{12}, z_{13}z_{21}\rangle, \\
\init I_2 &= \langle z_{31}, z_{13}z_{22}, z_{13}z_{21}, z_{12}z_{21}\rangle,
\end{align*}
and find their primary decompositions
\begin{align*}
\init I_1 &= 
\langle z_{12}, z_{13}, z_{22} \rangle \cap 
\langle z_{12}, z_{21}, z_{22} \rangle,\\
\init I_2 &= \langle z_{12}, z_{13}, z_{31} \rangle \cap 
\langle z_{13}, z_{21}, z_{31} \rangle \cap 
\langle z_{21}, z_{22}, z_{31} \rangle, 
\end{align*}
to find which pipe dreams they correspond to.
The resulting splitting of the pipe dream complex is shown in \autoref{fig:subword-complex-1432-split-right}. There, $I_1$ corresponds to the lower right piece and $I_2$
corresponds to the upper left part.

This Gr\"obner degeneration and the other five intermediate ones are shown in \autoref{tab:grobner-degens-1432} and \autoref{fig:subword-complex-1432-splittings}.
As we can see, none of the degenerations split the subword complex into the two pieces corresponding to the slides shown in \autoref{fig:subword-complex-1432}.

\begin{table}\centering
\caption{Intermediate antidiagonal Gr\"obner degenerations of $[1432]$.}
\label{tab:grobner-degens-1432}
\begin{tabular}{l|l|l}
Face & Primary Decomposition & Complex\\
\hline
$\RPIII$ 
&
  $\begin{aligned} 
    &\langle z_{22}, z_{12}, z_{13}z_{21} - z_{11}z_{23}\rangle\\
    &\langle z_{31}, z_{13}z_{22} - z_{12}z_{23}, z_{13}z_{21} - z_{11}z_{23}, z_{12}z_{21} - z_{11}z_{22}\rangle
  \end{aligned}$ & \autoref{fig:subword-complex-1432-split-right}
\\\hline
$\RPII$ 
&
  $\begin{aligned}
    &\langle z_{22}, z_{21}, z_{12}\rangle\\
    &\langle z_{31}, z_{21}, z_{13}z_{22} - z_{12}z_{23}\rangle\\
    &\langle z_{13}, z_{12}, z_{22}z_{31} - z_{21}z_{32}\rangle
  \end{aligned}$ & \autoref{fig:subword-complex-1432-split-in-three}
\\\hline
$\RPI$ 
&
  $\begin{aligned}
    &\langle z_{22}, z_{21}, z_{12}z_{31} - z_{11}z_{32}\rangle\\
    &\langle z_{13}, z_{22}z_{31} - z_{21}z_{32}, z_{12}z_{31} - z_{11}z_{32}, z_{12}z_{21} - z_{11}z_{22}\rangle
  \end{aligned}$& \autoref{fig:subword-complex-1432-split-left}
\\\hline
$\RPII \cup \RPIII$ 
&
  $\begin{aligned}
    &\langle z_{22}, z_{21}, z_{12}\rangle\\
    &\langle z_{31}, z_{13}, z_{12}\rangle\\
    &\langle z_{22}, z_{13}, z_{12}\rangle\\
    &\langle z_{31}, z_{21}, z_{13}z_{22} - z_{12}z_{23}\rangle
  \end{aligned}$ & \autoref{fig:subword-complex-1432-split-right-three}
\\\hline
$\RPI \cup \RPIII$ 
&
  $\begin{aligned}
    &\langle z_{22}, z_{21}, z_{12}\rangle\\
    &\langle z_{22}, z_{13}, z_{12}\rangle\\
    &\langle z_{31}, z_{22}, z_{21}\rangle\\
    &\langle z_{31}, z_{13}, z_{12}z_{21} - z_{11}z_{22}\rangle
  \end{aligned}$ & \autoref{fig:subword-complex-1432-split-left-right}
\\\hline
$\RPI \cup \RPII$ 
&
  $\begin{aligned}
    &\langle z_{31}, z_{21}, z_{13}\rangle\\
    &\langle z_{22}, z_{21}, z_{12}\rangle\\
    &\langle z_{31}, z_{22}, z_{21}\rangle\\
    &\langle z_{13}, z_{12}, z_{22}z_{31} - z_{21}z_{32}\rangle
    \end{aligned}$ & \autoref{fig:subword-complex-1432-split-left-three}
\\\hline
\end{tabular}
\end{table}

\begin{figure}\centering
\begin{subfigure}{0.4\textwidth}
\includestandalone[width=\textwidth]{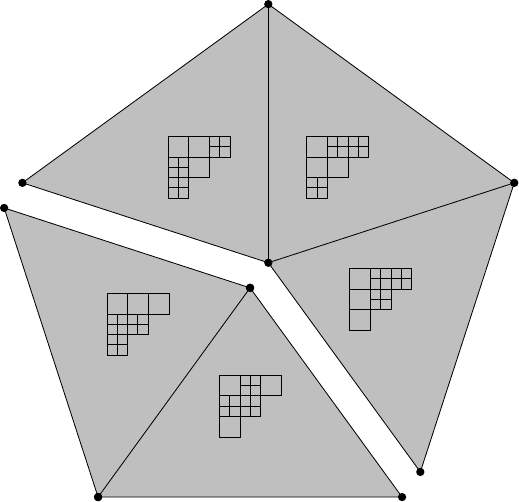}
\caption{$\RPI$}
\label{fig:subword-complex-1432-split-left}
\end{subfigure}
\hfill
\begin{subfigure}{0.4\textwidth}
\includestandalone[width=\textwidth]{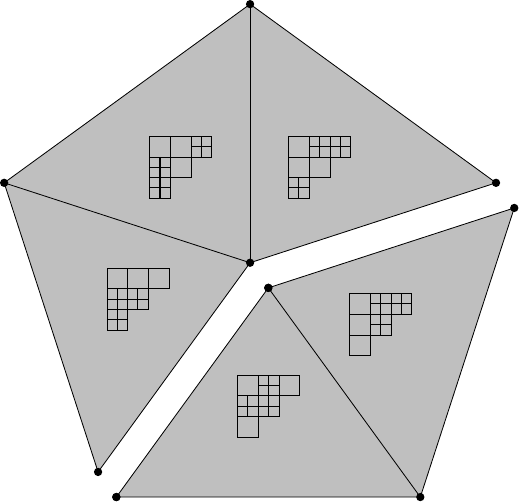}
\caption{$\RPIII$}
\label{fig:subword-complex-1432-split-right}
\end{subfigure}
\hfill
\begin{subfigure}{0.4\textwidth}
\includestandalone[width=\textwidth]{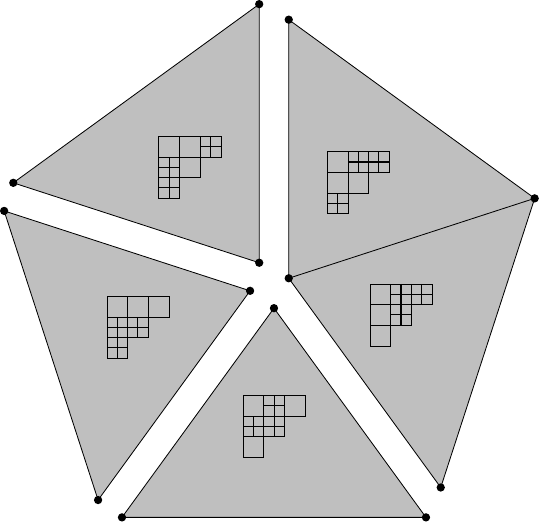}
\caption{$\RPI \cup \RPII$}
\label{fig:subword-complex-1432-split-left-three}
\end{subfigure}
\hfill
\begin{subfigure}{0.4\textwidth}
\includestandalone[width=\textwidth]{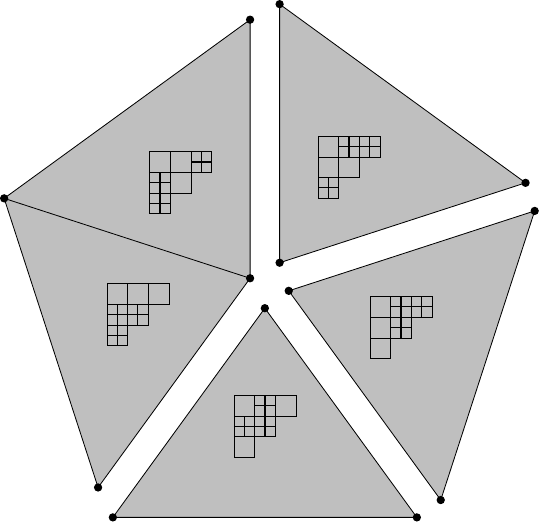}
\caption{$\RPII \cup \RPIII$}
\label{fig:subword-complex-1432-split-right-three}
\end{subfigure}
\hfill
\begin{subfigure}{0.4\textwidth}
\includestandalone[width=\textwidth]{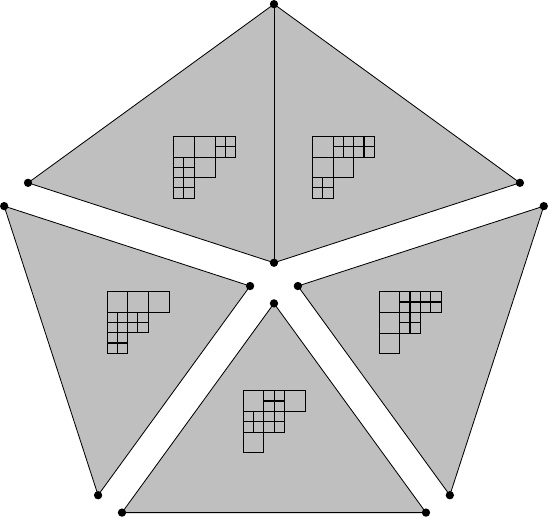}
\caption{$\RPI \cup \RPIII$}
\label{fig:subword-complex-1432-split-left-right}
\end{subfigure}
\hfill
\begin{subfigure}{0.4\textwidth}
\includestandalone[width=\textwidth]{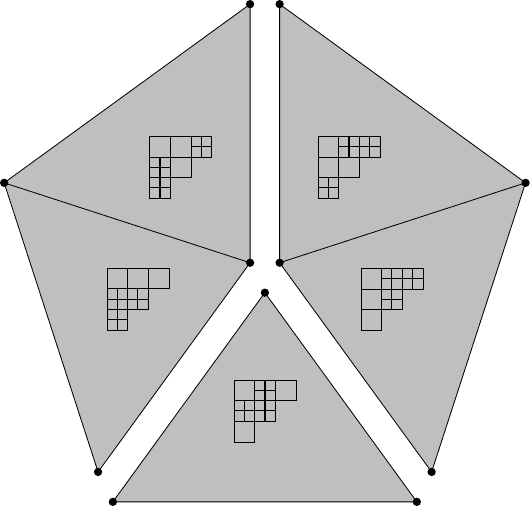}
\caption{$\RPII$}
\label{fig:subword-complex-1432-split-in-three}
\end{subfigure}
\caption[Splittings induced by antidiagonal weight orders, $\pi = \sqbl1432\sqbr$.]{Splittings induced by antidiagonal weight orders, of the pipe dream complex for $\pi = [1432]$.}\label{fig:subword-complex-1432-splittings}
\end{figure}

\subsection{Failure for Grassmannian permutations}
\begin{figure}
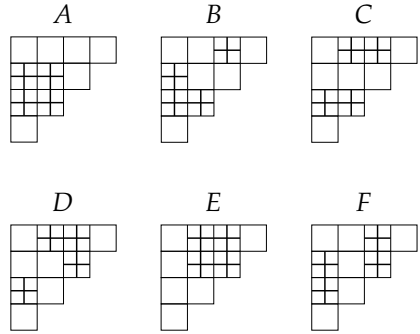
\centering
\includestandalone{pipe-dreams-14523}
\caption[The pipe dreams for $\sqbl14523\sqbr$.]{The pipe dreams for $[14523]$. The slide $3243$ corresponds to $\{A, B, C, D, E\}$ and $3423$ corresponds to $\{F\}$.}
\label{fig:pipe-dreams-14523}
\end{figure}
Here we show the results of all minimal Gr\"obner degenerations of the Grassmannian permutation $[14523]$ to show that even in this case, it is not the case that the slide complexes are an intermediate Gr\"obner degeneration of the matrix Schubert variety. After removing cone points, the subword complex for $[14523]$ is three dimensional, so we do not attempt to draw it. Instead, we are showing the pipe dreams in \autoref{fig:pipe-dreams-14523}. There are two words for $[14523]$, $3243$, and $3423$. All pipe dreams except $F$ correspond to $3243$.
\begin{figure}
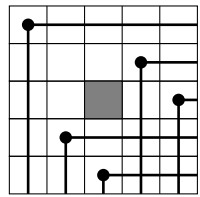
\centering
\includestandalone{rothe-diagram-14523}
\caption[The Rothe diagram for the permutation $\sqbl14523\sqbr$.]{The Rothe diagram for the permutation $[14523]$. The Fulton essential set $\{(3,3)\}$ is marked in gray.}
\label{fig:rothe-diagram-14523}
\end{figure}
The Rothe diagram for $[14523]$ in \autoref{fig:rothe-diagram-14523} shows that the rank of the north-west $3\times 3$ matrix is $1$, so the ideal is
\[
\begin{aligned}
I = \langle
&z_{12}z_{21} - z_{11}z_{22},
z_{13}z_{21} - z_{11}z_{23},
z_{13}z_{22} - z_{12}z_{23},\\
&z_{12}z_{31} - z_{11}z_{32},
z_{13}z_{31} - z_{11}z_{33},
z_{13}z_{32} - z_{12}z_{33},\\
&z_{22}z_{31} - z_{21}z_{32},
z_{23}z_{31} - z_{21}z_{33},
z_{23}z_{32} - z_{22}z_{33}
\rangle.
\end{aligned}
\]
A weight $w = (w_{11}, \dotsc, w_{33})$ defines an antidiagonal term order if
\begin{align*}
w_{12} + w_{21} &\ge w_{11} + w_{22},\\
w_{13} + w_{22} &\ge w_{12} + w_{23},\\
w_{22} + w_{31} &\ge w_{21} + w_{32},\\
w_{23} + w_{32} &\ge w_{22} + w_{33}.
\end{align*}
The antidiagonal cone is a four dimensional cone crossed with a five dimensional lineality space. The minimal Gr\"obner degenerations, which correspond to the rays of the cone, together with the rays and the are shown in \autoref{tab:minimal-grobner-degens-14523}. Again, we see that none of the degenerations correspond to the decomposition of the subword complex into slide complexes.

\begin{table}[hbt!]\centering
\caption{Minimal antidiagonal Gr\"obner degenerations of $[14523]$.}
\label{tab:minimal-grobner-degens-14523}
\begin{tabular}{l|l}
Ray & Splitting\\
\hline
$
\delimitershortfall=0pt
\vphantom{
\matrixspacingrule
}\kern-\nulldelimiterspace
\parenMatrixstack{
-1 & -1 & 0 \\
0 & 0 & 0 \\
0 & 0 & 0
}
\vphantom{
\matrixspacingrule
}\kern-\nulldelimiterspace
$
& $\{A\}$ , $\{B, C, D, E, F\}$
\\\hline
$
\delimitershortfall=0pt
\vphantom{
\matrixspacingrule
}\kern-\nulldelimiterspace
\parenMatrixstack{
-1 & 0 & 0 \\
0 & 0 & 0 \\
0 & 0 & 0
}
\vphantom{
\matrixspacingrule
}\kern-\nulldelimiterspace
$ 
& 
$\{A, B, F\}$ , $\{C, D, E\}$
\\\hline
$
\delimitershortfall=0pt
\vphantom{
\matrixspacingrule
}\kern-\nulldelimiterspace
\parenMatrixstack{
-1 & 0 & 0 \\
-1 & 0 & 0 \\
 0 & 0 & 0
}
\vphantom{
\matrixspacingrule
}\kern-\nulldelimiterspace
$
& 
$\{A, B, C, D, F\}$ , $\{E\}$
\\\hline
$
\delimitershortfall=0pt
\vphantom{
\matrixspacingrule
}\kern-\nulldelimiterspace
\parenMatrixstack{
-1 & -1 & 0 \\
-1 & -1 & 0 \\
 0 &  0 & 0
}
\vphantom{
\matrixspacingrule
}\kern-\nulldelimiterspace
$ 
& 
$\{A, B, C\}$, $\{D, E, F\}$
\\\hline
\end{tabular}
\end{table}

\section{Backwards saturated sets are balls or spheres}
\label{sec:backwards-saturated-balls}

Since, as we saw in \autoref{sec:slides-not-degen-of-schuberts}, the slide complexes are not Gr\"obner degenerations of pipe dream complexes, we keep looking for an explanation of where they come from. 
By analogy with how the shellability of the pipe dream complex explains why we can use monomials corresponding to faces to compute $\Groth_\pi$, the fact that we can also use glides to compute $\Groth_\pi$ suggests that a ``shelling by slide balls'' might exist.

Our goal, therefore, is to find an ordering of the reduced words $w_1$, \ldots, $w_k$ for $\pi$ such that, if $\Delta(Q, \pi)$ is $d$-dimensional, then $\Delta(Q, w_1) \cup \dotsb \cup \Delta(Q, w_{i-1})$ is a $d$-ball and $\Delta(Q, w_i) \cap (\Delta(Q, w_1) \cup \dotsb \cup \Delta(Q, w_{i-1}))$, is nonempty, pure and $(d-1)$-dimensional, for $1 \le i \le k$. 
We have not succeeded, but along the way, we managed to find a useful partial results, see \autoref{sec:forest-polynomials}.

\label{sec:which-sets-give-balls}
\begin{definition}
  Given a set $W = \{w_1, \dotsc, w_n\}$ of reduced words for $\pi$, let $\Delta_W(Q, \pi)$ be the subcomplex of the subword complex whose faces $P$ satisfy that $Q \setminus P$ contains a word from $W$.
\end{definition}
If $W = \{w\}$, then $\Delta_W(Q, \pi)$ is the slide complex $\Delta(Q, w)$, and if $W = \RW(\pi)$, then $\Delta_W(Q, \pi) = \Delta(Q, \pi)$. 
In both these cases, $\Delta_W(Q, \pi)$ is a ball or a sphere. We give will give a condition on $W$ that guarantees that $\Delta_W(Q, \pi)$ is a ball or sphere. This generalization includes the two previous cases as well as words corresponding to the forest polynomials of Nadeau and Tewari \cite{Forest-polynomial-paper}, see \autoref{sec:forest-polynomials}.
Of course, the ``$\pi$'' in $\Delta_W(Q, \pi)$ is superfluous. We are mainly including it to avoid mistakes in the proof of \autoref{thm:delta-w-is-vertex-decomposable}.

\begin{definition}
  Given a set $W$ of reduced words for $\pi$. Let $W_\sigma$ be the subset of $W$ consisting of words that begin with $\sigma$, and let $W_\sigma' = \{\,w \st \sigma w \in W_\sigma\,\}$ be those words, with their first letter removed.
  A set $W$ of words for $\pi$ is \emph{\bs{}} if:
  \begin{itemize}
  \item $W$ is empty, or
  \item for each $\sigma$ such that $W_\sigma \neq \emptyset$, $W_\sigma'$ is \bs{} and every word in $W$ contains a word in $W_\sigma'$ as a subword.
  \end{itemize}
\end{definition}

\begin{example}
  If $W = \{w\}$, then $W$ is \bs{}.
If $W = \RW(\pi)$, then $W$ is \bs{}.
If $W$ is the set of all words for $\pi$ starting with $\sigma$, then, $W_\sigma' = \RW(\sigma\pi)$ is \bs{} and every word in $W$ contains a word in $W_\sigma'$, so $W$ is \bs{}.
\end{example}

\begin{example}
  The set $W = \{1434, 4134, 4314, 4341\}$ is \bs{}. Here $W_1 = \{1434\}$, $W_1' = \{434\}$, $W_4 = \{4134, 4314, 4341\}$ and $W_4' = \{134, 314, 341\}$. All the words in $W_4$ contain $434$, and $1434$ contains $134$. $W_1'$ is \bs{} since it only contains one word and $W_4'$ is \bs{} since it equals $\RW([21453])$. 
\end{example}

The definition was made so that the proof of \autoref{thm:subword-complex-vertex-decomposable} only needs a minor modification to show that $\Delta_W(Q, \pi)$ is vertex-decomposable.

\begin{theorem}\label{thm:delta-w-is-vertex-decomposable}
  If $W$ is a \bs{} set of reduced words for $\pi$, then $\Delta_W(Q, \pi)$ is vertex-decomposable.
\end{theorem}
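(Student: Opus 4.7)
The plan is to follow the proof of \autoref{thm:subword-complex-vertex-decomposable}, inducting on $\abs{Q}$. Write $Q = (\sigma, \sigma_1, \dotsc, \sigma_n)$ and $Q' = (\sigma_1, \dotsc, \sigma_n)$. First observe that $\Delta_W(Q, \pi)$ is pure of dimension $\abs{Q} - \len(\pi) - 1$, since each facet is the complement of an embedding of some $w \in W$ into $Q$ and every such $w$ has length $\len(\pi)$. So it suffices to identify both $\link_\sigma \Delta_W(Q, \pi)$ and $\del_\sigma \Delta_W(Q, \pi)$ with complexes of the form $\Delta_{W''}(Q', \pi'')$ for a \bs{} $W''$ and a permutation $\pi''$, and then invoke the induction hypothesis; the base case $\abs{Q} = 0$ is either $\{\emptyset\}$ or trivially handled.

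For the link, a subword $P$ of $Q$ with $\sigma \notin P$ satisfies $P \cup \{\sigma\} \in \Delta_W(Q, \pi)$ exactly when $Q \setminus (P \cup \{\sigma\}) = Q' \setminus P$ contains a word from $W$. Hence $\link_\sigma \Delta_W(Q, \pi) = \Delta_W(Q', \pi)$, and since $W$ is still \bs{}, this is vertex-decomposable by induction.

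For the deletion, a face $P$ with $\sigma \notin P$ lies in $\del_\sigma \Delta_W(Q, \pi)$ exactly when $\sigma R$, with $R = Q' \setminus P$, contains a word from $W$. An embedding of $w \in W$ into $\sigma R$ either uses the leading $\sigma$ — forcing $w = \sigma w'$ with $w' \in W_\sigma'$ embedded in $R$ — or does not, in which case $w$ embeds entirely in $R$. If $W_\sigma = \emptyset$, only the second case is possible, and $\del_\sigma \Delta_W(Q, \pi) = \Delta_W(Q', \pi)$. If $W_\sigma \neq \emptyset$, the \bs{} hypothesis tells us every $w \in W$ already contains some $w' \in W_\sigma'$ as a subword, so any $R$ satisfying the second case also satisfies the first; the two collapse into the single condition that $R$ contain a word from $W_\sigma'$, giving $\del_\sigma \Delta_W(Q, \pi) = \Delta_{W_\sigma'}(Q', \sigma\pi)$. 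Note that $W_\sigma \neq \emptyset$ implies $\len(\sigma\pi) = \len(\pi) - 1$, so $W_\sigma'$ consists of reduced words for $\sigma\pi$, and $W_\sigma'$ is \bs{} by definition; thus induction applies in both cases.

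The main obstacle, and the place where \bs{} is genuinely used, is collapsing the deletion in the case $W_\sigma \neq \emptyset$ into a single subword complex. Without this hypothesis, the deletion would be indexed by something like $W \cup W_\sigma'$, which is not of the form $\Delta_{W''}(Q', \pi'')$ because $W$ and $W_\sigma'$ are sets of reduced words for \emph{different} permutations, and the induction would fail. \Bs{} is precisely the combinatorial input that absorbs $W$ into $W_\sigma'$ under subword containment, so the two conditions merge cleanly.
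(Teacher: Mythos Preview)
Your proof is correct and follows essentially the same approach as the paper's: induct on $\abs{Q}$, identify the link of the initial letter $\sigma$ with $\Delta_W(Q',\pi)$, and identify the deletion with either $\Delta_W(Q',\pi)$ (when $W_\sigma=\emptyset$) or $\Delta_{W_\sigma'}(Q',\sigma\pi)$ (when $W_\sigma\neq\emptyset$), using the \bs{} hypothesis to absorb the words not starting with $\sigma$ into $W_\sigma'$. Your final paragraph explaining why \bs{} is exactly the condition needed to collapse the deletion is a nice addition not present in the paper's proof.
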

\begin{proof}
  Since $W$ is a set of reduced words for $\pi$, the facets of $\Delta_W(Q, \pi)$ are a subset of the facets of $\Delta(Q, \pi)$, so $\Delta_W(Q, \pi)$ is pure.

  Let $Q = (\sigma, \sigma_1, \dotsc, \sigma_n)$, $Q' = (\sigma_1, \dotsc, \sigma_n)$. The link of $\sigma$ consists of those faces $P$ whose complements $Q\setminus P$ are of the form 
$\sigma R$, where $R$ contains a word from $W$. So the link can be identified with $\Delta_W(Q', \pi)$, which is vertex-decomposable by induction.

The deletion of $\sigma$ consists of those faces $P$ whose complements $Q \setminus P$ are of the form $\sigma R$, where $\sigma R$, contains a word from $W$. 
There are two cases. 

If $W_\sigma$ is empty, the deletion of $\sigma$ is equal to the link, since none of the words contained in the complements of the faces actually use the $\sigma$. 

If $W_\sigma$ is nonempty, there are two kinds of words. 
A complement $\sigma R$ contains $\sigma w'$ if and only if $R$ contains $w'$. A complement $\sigma R$ contains  $\tau w'$ for $\tau \neq \sigma$ if and only if $R$ contains $\tau w'$, but since $W$ is \bs{}, $R$ then contains some $w'' \in W_\sigma'$. 
So the deletion of $\sigma$ equals $\Delta_{W_\sigma'}(Q', \sigma \pi)$.
Since the words in $W_\sigma'$ are words for $\sigma\pi$, $\Delta_{W_\sigma'}(Q', \sigma \pi)$ is vertex-decomposable by induction. Hence, $\Delta_W(Q, \pi)$ is vertex-decomposable.
\end{proof}

The version where $W$ is a singleton was proven by Smirnov and Tutubalina \cite[Theorem 5]{Smirnov-Tutubalina} (also by modifying the proof of \autoref{thm:subword-complex-vertex-decomposable}). 
Continuing like in \autoref{thm:subword-complexes-are-balls-or-spheres}, we get that these complexes are balls or spheres. 

\begin{theorem}\label{thm:delta-w-ball-or-sphere}
  If $W$ is a \bs{} set of reduced words for $\pi$, then $\Delta_W(Q, \pi)$ is a ball or a sphere.
\end{theorem}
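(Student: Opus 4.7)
The plan is to mimic the structure of the argument for \autoref{thm:subword-complexes-are-balls-or-spheres}, which reduced the ball-or-sphere question to (i) shellability and (ii) thinness, then invoked \autoref{thm:ball-sphere}. Both ingredients are essentially already in place: shellability comes for free from \autoref{thm:delta-w-is-vertex-decomposable} (vertex-decomposable complexes are shellable), so the only real work is to check that in $\Delta_W(Q,\pi)$ every codimension-one face lies in at most two facets.

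The key observation is that $\Delta_W(Q,\pi)$ is a pure subcomplex of $\Delta(Q,\pi)$ of the \emph{same} dimension. Indeed, the facets of $\Delta_W(Q,\pi)$ are by definition complements of the subwords of $Q$ representing the words in $W$, and those are all reduced words for the same permutation $\pi$; hence each such facet is also a facet of $\Delta(Q,\pi)$, and the two complexes share the same dimension $d = |Q| - \len(\pi) - 1$. I would make this formal observation first, since it is what lets thinness transfer from the ambient complex.

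Next, given any codimension-one face $F$ of $\Delta_W(Q,\pi)$, I would argue as follows. Since $\Delta_W(Q,\pi)$ is pure of dimension $d$ and $\dim F = d-1$, and since the facets of $\Delta_W(Q,\pi)$ are also facets of $\Delta(Q,\pi)$, $F$ is a codimension-one face of $\Delta(Q,\pi)$ as well. By \autoref{lemma:subword-complex-thin}, $F$ lies in at most two facets of $\Delta(Q,\pi)$, hence in at most two facets of the subcomplex $\Delta_W(Q,\pi)$.

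Finally, combining \autoref{thm:delta-w-is-vertex-decomposable} (which gives vertex-decomposability, and therefore shellability) with the thinness just established, \autoref{thm:ball-sphere} yields that $\Delta_W(Q,\pi)$ is a ball or a sphere. I do not foresee an obstacle here: the content has been absorbed into \autoref{thm:delta-w-is-vertex-decomposable}, and the remaining step is the routine application of the two general-purpose results already cited in the paper.
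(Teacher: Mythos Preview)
Your proposal is correct and follows essentially the same route as the paper: use that the facets of $\Delta_W(Q,\pi)$ are a subset of the facets of $\Delta(Q,\pi)$ to inherit thinness from \autoref{lemma:subword-complex-thin}, combine with shellability from \autoref{thm:delta-w-is-vertex-decomposable}, and apply \autoref{thm:ball-sphere}. The paper's proof is terser (it leaves the appeal to vertex-decomposability implicit), but the content is the same.
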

\begin{proof}
  Since $W$ consists of reduced words for $\pi$, the facets of $\Delta_W(Q, \pi)$ are a subset of the facets of $\Delta(Q, \pi)$. Therefore, by \autoref{lemma:subword-complex-thin}, each codimension $1$ face is contained in at most two facets, so by \autoref{thm:ball-sphere} $\Delta_W(Q, \pi)$ is a ball or a sphere.
\end{proof}

This theorem shows that many unions of slide complexes are balls or spheres. 
However, there are $\pi$ where there is no ordering $w_1$, \ldots, $w_n$ of the reduced words for $\pi$, such that vertex-decomposing, starting with the first letter, shows that $\Delta_{\{w_1, \dotsc, w_k\}}(Q, \pi)$ is a ball for every $k$, $1 \le k \le n$.

\begin{example}\label{ex:bs-not-strong-enough}
  Let $\pi$ be the product of $abc$, where $a$, $b$ and $c$ all commute with each other, and let $Q$ be a long enough triangular word. 
The words for $\pi$ are $abc$, $acb$, $bac$, $bca$, $cab$ and $cba$. Then, for any set $W$ containing all but one of the words, repeatedly taking the link or deletion of the first letter of $Q$ fails to show that $\Delta_W(Q, \pi)$ is vertex-decomposable. Assume without loss of generality that $W$ contains all the words except $cba$. As we saw from the proof of \autoref{thm:delta-w-is-vertex-decomposable}, when we take the link, $Q$ gets shorter but $W$ does not change. So we take the link until the first letter is $c$, and then take the deletion of the $c$. The result is the subcomplex of $\Delta(Q, c\pi)$ whose facets $P$ satisfy that $Q \setminus P$ contains a word in $\{ab, bac, bca\}$. Due to the differing lengths of the words, the result might not be pure and therefore not vertex-decomposable.

Of course, a more careful analysis would probably show that (some of) these $\Delta_W(Q, \pi)$ are balls or spheres. Indeed, a computer search suggests that some of them are homology-spheres.
\end{example}

\subsection{Forest complexes are balls or spheres}\label{sec:forest-polynomials}
Nadeau and Tewari \cite{Forest-polynomial-paper} introduced a new class of polynomials, forest polynomials, that give a coarsening of the slide expansion of Schubert polynomials. They expand positively in slide polynomials and Schubert polynomials expand positively in forest polynomials. Since the definition is long, we do not include it, but instead only the details needed to show that the simplicial complexes corresponding to forest polynomials are balls or spheres.
\begin{remark}
Note that in \cite{Forest-polynomial-paper}, they index slide polynomials by reversed words. This switches $\pi$ and $\inv \pi$ and left and right in their definitions.
\end{remark}
Given a reduced word $w$ for $\pi$, a ``local binary search forest'' $P(w)$ is built inductively by inserting letters from right to left according to some rules \cite[Section~4.1]{Forest-polynomial-paper}. 
If $w = \sigma w'$, then $P(w)$ is $P(w')$ with one added node. This new node is an orphan: it is not the child of any node in $P(w')$.
Two words $w_1$ and $w_2$ are equivalent, $w_1 \forestequiv w_2$ if $P(w_1) = P(w_2)$.
Further, they show that $\forestequiv$ is generated by the relation $\forestsim$, where $w_1 \forestsim w_2$ if $w_1 = uabv$ and $w_2 = ubav$ for words $u$ and $v$ and letters $a$ and $b$ such that after inserting $b$ and then $a$ in $P(v)$, the node added by inserting $b$ is not a child of the node added by inserting $a$.

Let $\sG_\pi$ be the set of equivalence classes of reduced words for $\pi$. Then
the Schubert polynomial $\Schub_\pi$ expands as \cite[Theorem 1.4]{Forest-polynomial-paper}
\[
\Schub_\pi = \sum_{\sC \in \sG_\pi} \forest_\sC,
\]
where $\forest_\sC$ is the forest polynomial \cite[Equation 3.4]{Forest-polynomial-paper}
\[
\forest_\sC = \sum_{w \in \sC} \slide_w.
\]
\begin{theorem}
  The equivalence classes $\sC$ are \bs{}, so the forest complexes $\Delta_\sC(Q, \pi)$ are balls or spheres.
\end{theorem}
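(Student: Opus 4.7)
The plan is induction on $\len(\pi)$; the base case $\pi = e$ is trivial since then $\sC = \{\emptyset\}$. For the inductive step, fix $\sC \in \sG_\pi$ and a letter $\sigma$ with $\sC_\sigma \neq \emptyset$. I need to verify the two conditions in the definition of backwards saturated: that $\sC_\sigma'$ is itself backwards saturated, and that every $w \in \sC$ contains a word in $\sC_\sigma'$ as a subword.

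The first condition reduces to showing that $\sC_\sigma'$ is an equivalence class in $\sG_{\sigma\pi}$, after which the inductive hypothesis applied to the strictly shorter permutation $\sigma\pi$ finishes it. The reduction is immediate from the recursive description of $P$: since $P(\sigma u) = P(u) \cup \{\sigma\text{-orphan}\}$, we have $P(\sigma u_1) = P(\sigma u_2)$ if and only if $P(u_1) = P(u_2)$, so peeling off the leading $\sigma$ is a $\forestequiv$-preserving bijection between $\sC_\sigma$ and $\sC_\sigma'$.

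For the second condition, given $w \in \sC$, write $w = \tau w'$. If $\tau = \sigma$ then $w'$ itself lies in $\sC_\sigma'$ and is a subword of $w$. Otherwise, let $\sC''$ be the equivalence class of $w'$ in $\sG_{\tau\pi}$ (equivalently, $\sC_\tau'$). The forest $P(w)$ has both a $\sigma$-orphan and a $\tau$-orphan, and removing the $\tau$-orphan leaves the $\sigma$-orphan intact, so $(\sC'')_\sigma$ is nonempty. By the inductive hypothesis on $\tau\pi$, $\sC''$ is backwards saturated, which supplies a subword $v$ of $w'$ lying in $(\sC'')_\sigma'$. Then $\tau v$ is a subword of $w$, and $P(\tau v) = P(v) \cup \{\tau\text{-orphan}\}$ equals $P(w)$ with the $\sigma$-orphan removed, so $\tau v \in \sC_\sigma'$.

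The one point that warrants care is the last forest identity --- that re-attaching a $\tau$-orphan to the forest obtained by stripping both orphans from $P(w)$ recovers $P(w)$ with only the $\sigma$-orphan removed. This reduces to the fact that orphan-removal on a labeled forest is order-independent, since orphans are roots and deleting two distinct roots in either order yields the same labeled forest. An immediate byproduct is that $\sigma$ and $\tau$ must commute in $S_\infty$ (since both orders of removal produce the same underlying permutation), which is exactly what makes $\tau v$ a reduced word for $\sigma\pi$ rather than for some other permutation. Once these checks are in place, the ball-or-sphere conclusion follows from \autoref{thm:delta-w-ball-or-sphere}.
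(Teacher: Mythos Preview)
Your proof is correct, but it follows a genuinely different route from the paper's, and the final paragraph leans on a couple of facts about the Nadeau--Tewari construction that deserve comment.

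The paper handles the second condition in one stroke: given any $w'\in\sC$, it locates the \emph{specific} letter of $w'$ whose insertion created the $\sigma$-orphan $p$, and then uses that $p$ remains an orphan throughout the rest of the insertion to commute that $\sigma$ all the way to the front via a chain of $\forestsim$ moves. Deleting that letter from $w'$ therefore produces the required subword in $\sC_\sigma'$. Your approach instead inducts a second time: you peel off the first letter $\tau$, invoke backwards-saturation of $\sC_\tau'$ (available because $\len(\tau\pi)<\len(\pi)$) to find $v\subset w'$ in $(\sC_\tau')_\sigma'$, and then argue that $\tau v\in\sC_\sigma'$.

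Two points where your writeup is thinner than it should be. First, the claim that $(\sC'')_\sigma\neq\emptyset$ because $P(w')$ still has a $\sigma$-root is precisely the statement ``every root label is a possible first letter of some word in the class.'' The paper does not take this as a black box; its direct commutation argument \emph{is} the proof of this fact. You should either cite it from \cite{Forest-polynomial-paper} or note that it follows from the generating relation $\forestsim$. Second, your forest identity is more cleanly argued as follows: from $\sigma v\in\sC_\tau'$ we get $\tau\sigma v\in\sC$, and since in $P(\tau\sigma v)=P(w)$ the $\sigma$-node is \emph{not} a child of the $\tau$-node (both are orphans), the generating relation gives $\tau\sigma v\forestsim\sigma\tau v$, hence $\sigma\tau v\in\sC$ and $\tau v\in\sC_\sigma'$. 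Your phrasing via ``order-independence of orphan removal'' gestures at this but does not by itself control what \emph{insertion} of $\tau$ into $P(v)$ produces; the $\forestsim$ step is what closes that loop. The commutation of $s_\sigma$ and $s_\tau$ then indeed falls out, since $\forestsim$ relates reduced words for the same permutation.

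The paper's argument is shorter and constructive (it names the deleted letter); yours is a nice illustration that the recursive definition of backwards-saturated can be verified purely by induction on length, without ever explicitly identifying which letter of $w$ to drop.
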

\begin{proof}
  Let $P$ be the local binary search forest corresponding to $\sC$.
  If $\sigma w \in \sC$, let $p$ be the node created by the insertion of $\sigma$. Then $p$ is an orphan in $P$, and removing it from $P$ gives the forest for the set $\sC_\sigma' = \setbuilder{v}{\sigma v \in \sC}$, so by induction, $\sC_\sigma'$ is \bs{}. 
Let $w' \in \sC$. During the construction of $P(w') = P$, the insertion of some letter $\sigma$ created the node $p$. Let $w' = u\sigma v$, where $\sigma$ is this letter.
Since $p$ is an orphan in $P$, commuting $\sigma$ to the left gives an equivalent word, $w' \forestequiv \sigma u v$. But $\sigma uv \in \sC_\sigma$, so $w'$ contains $uv \in \sC_\sigma'$. 

Thus, $\sC$ is \bs{}, and by \autoref{thm:delta-w-ball-or-sphere}, $\Delta_\sC(Q, \pi)$ is a ball or a sphere.
\end{proof}

\section{Decomposing tableau complexes} 
\label{sec:decomposing-tableau-complexes}
Just as the subword complex for $\pi$ splits into balls corresponding to the terms in the slide expansion  of the Schubert polynomial $\Schub_\pi$ (\autoref{thm:schubert-slide-ball-decomposition}), we will show that the tableau complex of semistandard Young tableaux splits into balls corresponding to the terms in the expansion of the Schur polynomial $\schur_\lambda$ into a sum of fundamental quasisymmetric polynomials:
\begin{equation}\label{eq:schur-fundamental-quasisymmetric}
\schur_\lambda = \sum_{T \in \SYT(\lambda)}\quasi_{\comp(\Des(T))}.
\end{equation}

As we saw in \autoref{thm:composition-tableau-complex} each fundamental quasisymmetric polynomial has a corresponding composition tableau complex, which is a ball or sphere. So what remains is to identify them inside the tableaux complex of semistandard Young tableaux.

\begin{theorem}
  Given a standard Young tableaux $T$ of shape $\lambda$ and let $\sS_T$ be the set of semistandard Young tableau with standardization $T$. The tableau complex $\Delta(\sS_T)$ is a ball or a sphere.
\end{theorem}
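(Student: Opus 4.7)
The plan is to apply \autoref{thm:tableau-complex-ball-or-sphere} after writing $\sS_T$ as a set of tableaux cut out by a poset structure on $\lambda$ together with an explicit collection of strict-comparison pairs.

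First I would establish the following characterization of $\sS_T$. Let $n = \size{\lambda}$, and for each $i$ set $b_i \defeq T^{-1}(i)$, so that $b_1, \dotsc, b_n$ is a total ordering of the cells of $\lambda$. Then a semistandard Young tableau $U$ of shape $\lambda$ has standardization $T$ if and only if the sequence $U(b_1), U(b_2), \dotsc, U(b_n)$ is weakly increasing, with strict inequality $U(b_i) < U(b_{i+1})$ whenever $i \in \Des(T)$. This is the classical bijective argument behind the identity $\schur_\lambda = \sum_{T \in \SYT(\lambda)} \quasi_{\comp(\Des(T))}$: a term $x_{i_1}\dotsm x_{i_n}$ in $\quasi_{\comp(\Des(T))}$ corresponds to $U$ via $i_j = U(b_j)$, and the standardization condition forces each equal-value block of the sequence $U(b_1), \dotsc, U(b_n)$ to occur only across non-descent positions of $T$.

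Second, I would feed this characterization into \autoref{thm:tableau-complex-ball-or-sphere} by taking $B \defeq \lambda$ equipped with the total order $b_1 < \dotsm < b_n$, setting each $I_b \defeq \interval{N}$ with $N$ an upper bound on the entries appearing in $X$, and letting $\Psi$ consist of the column-adjacent pairs $\bigl((r,c),(r+1,c)\bigr)$ (which force strict columns and thereby recover semistandardness) together with the descent pairs $(b_i, b_{i+1})$ for $i \in \Des(T)$. The order-preserving tableaux for these data are precisely the weakly increasing chains $U(b_1) \le \dotsm \le U(b_n)$; the weak row condition drops out automatically because $T$ has strictly increasing rows; and the two families in $\Psi$ supply exactly the strictness required by the characterization. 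Hence the resulting tableau set equals $\sS_T$, and \autoref{thm:tableau-complex-ball-or-sphere} yields vertex-decomposability and the ball-or-sphere conclusion.

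The main obstacle is the characterization step: one has to pin down the reading-order convention for standardization used in \autoref{sec:tableaux} and verify that equality $U(b_i) = U(b_{i+1})$ is allowed exactly at non-descents of $T$ (a short case analysis using that $T$ is order-preserving with strictly increasing rows). Once this is in hand, the construction of $(B, I_b, \Psi)$ and the invocation of \autoref{thm:tableau-complex-ball-or-sphere} are essentially mechanical.
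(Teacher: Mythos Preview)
Your proposal is correct and essentially identical to the paper's proof: both establish the characterization ``$U$ standardizes to $T$ iff $U(b_1)\le\dotsb\le U(b_n)$ with strictness at descents'' and then feed it into \autoref{thm:tableau-complex-ball-or-sphere} by totally ordering the boxes via $T$ and adding the descent pairs to $\Psi$. The only cosmetic difference is that the paper phrases the poset as the SSYT partial order refined by the relations $T^{-1}(k)<T^{-1}(k+1)$, whereas you pass directly to the resulting total order; the column-adjacent pairs you include in $\Psi$ are in fact redundant (between any two vertically adjacent boxes there is always a descent of $T$), but this does no harm.
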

\begin{proof}
  Let $S$ be a semistandard Young tableau with shape $\lambda$ and let $s_k$ be the entry of $S$ in the box where $T$ contains $k$. The standardization of $S$ is $T$ if and only if $s_k \le s_{k+1}$ for all $k$, and $s_k < s_{k+1}$ when $k$ is a descent of $T$.

Thus, the set $S_T$ of semistandard Young tableaux with standardization $T$ is given by taking the parts defining semistandard Young tableaux in \autoref{thm:semistandard-tableau-complex} and adding $\inv T(k) < \inv T(k+1)$ to $B$ and 
\(
\setbuilder{(\inv T(k), \inv T(k+1))}{\text{for all descents $k$ of $T$}}
\)
to $\Psi$. 
Thus, by \autoref{thm:tableau-complex-ball-or-sphere}, $\Delta(\sS_T)$ is a ball or a sphere.
\end{proof}
The condition that $s_k \le s_{k+1}$ and $s_k < s_{k+1}$ if $k$ is a decent of $T$ are exactly the conditions that a composition tableau of shape $\comp(\Des(T))$ satisfy, so
\(
\quasi_{\comp(\Des(T))} = \sum x^S,
\)
where the sum is over all semistandard Young tableaux $S$ with standardization $T$. Thus, we get the following (cf. \autoref{thm:schubert-slide-ball-decomposition}).
\begin{theorem}\label{thm:schur-splits-into-balls}
  The tableau complex $\Delta(\SSYT_n(\lambda))$ can be decomposed into balls or spheres, with the balls or spheres corresponding to the fundamental quasisymmetric polynomials appearing in the expansion 
\[
\schur_\lambda = \sum_{T \in \SYT(\lambda)}\quasi_{\comp(\Des(T))}.
\]
\end{theorem}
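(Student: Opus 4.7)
The plan is to combine the theorem just proved (that $\Delta(\sS_T)$ is a ball or sphere for each standard Young tableau $T$) with the observation that standardization partitions $\SSYT_n(\lambda)$ into the sets $\sS_T$ indexed by $T \in \SYT(\lambda)$. First I would note that every semistandard Young tableau $S$ has a unique standardization $T$, so the collection $\{\sS_T : T \in \SYT(\lambda)\}$ is a partition of $\SSYT_n(\lambda)$. Taking complements inside $\bigcup_{S \in \SSYT_n(\lambda)} S$, this partition on the level of tableaux (which are the facets of the respective complexes) gives an expression of $\Delta(\SSYT_n(\lambda))$ as the union of its subcomplexes $\Delta(\sS_T)$.

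Next I would invoke the preceding theorem to conclude that each piece $\Delta(\sS_T)$ in the union is a ball or a sphere, so this union is the decomposition asserted by the theorem.

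Finally, to match the decomposition with the expansion displayed in the theorem, I would unpack the discussion right before the theorem: the condition on a semistandard Young tableau $S$ to have standardization $T$ translates exactly to the inequalities that say, after relabeling boxes of $\lambda$ according to the order induced by $T$, that the entries weakly increase and strictly increase at descents of $T$. By \autoref{prop:standard-fundamental-quasisymmetric-definition} (or equivalently by \autoref{def:fundamental-quasisymmetric} after identifying the reading order with the composition tableau condition on $\comp(\Des(T))$), this gives
\[
\quasi_{\comp(\Des(T))}(x_1, \dotsc, x_n) = \sum_{S \in \sS_T} x^S,
\]
so the term $\quasi_{\comp(\Des(T))}$ in the Schur expansion corresponds exactly to the ball or sphere $\Delta(\sS_T)$.

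There is no serious obstacle here; the main content has already been shifted onto the previous theorem and onto the standardization bijection. The only thing to be careful about is the ambient set $X$ used to define the tableau complex: I would note that as observed after the definition of $\tabcomplexE{\sT}{X}$, the complex depends on $X$ only very weakly, and choosing $X = \bigcup_{S \in \SSYT_n(\lambda)} S$ makes the subcomplex identification $\Delta(\sS_T) \subseteq \Delta(\SSYT_n(\lambda))$ literal rather than up to adding phantom vertices.
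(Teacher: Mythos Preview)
Your proposal is correct and follows exactly the paper's approach: the paper presents this theorem as an immediate consequence of the preceding theorem (that each $\Delta(\sS_T)$ is a ball or sphere) together with the observation that the standardization partition matches the terms $\quasi_{\comp(\Des(T))}$, and in fact does not include a separate proof block at all. Your write-up simply makes explicit the steps the paper leaves as ``Thus, we get the following.''
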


There is a $K$-theoretic version of the above. With 
\[
\tilde{F}_\mu = \sum_{T}(-1)^{\size{T} - \size{\lambda}}x^T,
\] 
where the sum is over set-valued composition tableau, and with $\tilde{G}_\lambda$ the stable Grothendieck polynomials, the following generalization of \autoref{eq:schur-fundamental-quasisymmetric} appears in \cite{Multifundamental-paper}.
\[
  \tilde{G}_\lambda = \sum_\mu (-1)^{\size{\mu} - \size{\lambda}}\tilde{F}_\mu.
\]
As above, $\mu$ corresponds to a ball (or sphere) of (set-valued) semistandard Young tableau of shape $\lambda$ with a given (set-valued) standardization. 
But we have not found a non-tautological description in terms of standard Young tableau of which compositions $\mu$ appear (in \cite{Multifundamental-paper}, the description is given in terms of ``multi-Jordan-Holder sets'').

\bibliography{mybib}
\end{document}